\documentclass[12pt, reqno]{amsart}
\input{deflea.sty}
\usepackage[utf8]{inputenc}
\usepackage{verbatim}
\usepackage{cases}
\usepackage{booktabs}

\counterwithout{table}{section}

\usepackage[justification=centering]{caption}

\title[$p$-adically convergent loci for periodic continued fractions]{$p$-adically convergent loci in varieties arising from periodic continued fractions}

\author{Laura Capuano}
\address[Laura Capuano]{Dipartimento di Matematica e Fisica, Università degli Studi Roma Tre}
\email{laura.capuano@uniroma3.it}
\author{Marzio Mula}
\address[Marzio Mula]{Research Institute CODE, University of the Bundeswehr Munich}
\email{marzio.mula@unibw.de}
\author{Lea Terracini}
\address[Lea Terracini]{Dipartimento di Informatica, Università degli Studi di Torino}
\email{lea.terracini@unito.it}
\author{Francesco Veneziano}
\address[Francesco Veneziano]{Dipartimento di Matematica, Universit\`a degli Studi di Genova}
\email{francesco.veneziano@unige.it}

\date{\today}
\keywords{$p$-adic continued fractions, periodicity, Pell equations, algebraic varieties.}
\subjclass{11J70, 11D88, 11Y65, 11D09} %Continued fractions and generalizations, $p$-adic and power series fields, continued fraction calculation, quadratic and bilinear diophantine equation

\begin{document}

\begin{abstract}
Inspired by the existence of multiple alternative definitions of continued fraction expansions for elements in $\QQ_p$, we study the $p$-adic convergence of periodic continued fractions with partial quotients in $\ZZ[1/p]$ from a geometric point of view. To this end, following a previous work by Brock, Elkies, and Jordan, we consider certain algebraic varieties whose points represent formal periodic continued fractions with period and preperiod of fixed lengths, satisfying a given quadratic equation. We then focus on the \emph{$p$-adically convergent loci} of these varieties, describing the zero and one-dimensional cases by combining tools from algebraic geometry, arithmetic, and the theory of Pell equations and of linear recurrences.
\end{abstract}

\maketitle

\section{Introduction}
Given an element $\alpha\in \mathbb{R}$, the classical \emph{(Archimedean) continued fraction expansion} of $\alpha$ is  computed by means of the following algorithm: \begin{equation}
    \label{eqn:CFalg}
    \begin{cases} \alpha_0 =\alpha\\
c_n =\lfloor\alpha_n\rfloor \\
    \alpha_{n+1} = \frac 1 {\alpha_n-c_n} \quad \textrm{if } \alpha_n-c_n \neq 0,\end{cases} 
\end{equation}
stopping if $\alpha_n=c_n$, where $\lfloor \cdot \rfloor$ denotes the integral floor function. It is well known that $\alpha$ is rational if and only if its continued fraction expansion is finite, and, if $\alpha$ is not rational, then the sequence of \emph{$n$-th convergents}
\begin{align*}
    [c_1,\dotsc,c_n]=c_1 + \cfrac{1}{c_2+ \cfrac{1}{\ddots + \cfrac{1}{c_n}}}
\end{align*}
converges to $\alpha$. In the latter case, eventually periodic sequences (PCFs) are of particular interest: their study dates back to Lagrange~\cite{lagrange1770}, who proved that the continued fraction expansion of $\alpha$ is eventually periodic if and only if $\alpha$ is a quadratic irrational.
\par In~\cite{BrockElkies2021} the authors investigate periodic continued fractions using the language and techniques of algebraic geometry. More specifically, given a multi-set $\{\beta,\beta^\ast\}\subseteq \mathbb{P}^1(\overline{\mathbb Q})$, and two positive integers $N$ and $k$, the authors define a variety $V_{N,k}(\beta,\beta^\ast)\subseteq {\mathbb A}^{N+k}$, called \emph{PCF variety}, whose points $(b_1,\dotsc,b_N, a_1,\dotsc,a_k)$, seen as the continued fraction $[b_1,\dotsc,b_N,\overline{ a_1,\dotsc,a_k}]$ with pre-period of length $N$ and period of length $k$, formally satisfy the quadratic equation having $\beta$ and $\beta^\ast$ as roots. They particularly focus on PCF varieties of dimension $\leq 1$ and consider points in some ring of $S$-integers $\calo_S$ in a number field $K$. We stress that these sequences need not arise as continued fraction expansions nor be convergent. For any ring $A\subseteq \CC$, the set $V_{N,k}(\beta,\beta^*)(A)$ of $A$-rational points on $V_{N,k}(\beta,\beta^*)$ has a notable subset: the \emph{convergent locus}, whose points correspond to continued fractions that converge in $\CC$. In \cite[\S 4]{BrockElkies2021} the authors also give an algorithm to detect the convergence of a given continued fraction.
%\par
%n~\cite{BrockElkies2021}, PCFs $[c_1,c_2,\dots]$ are studied in a more general setting: namely, the $c_i$'s lie in the ring of $S$-integers of a number field $K$, denoted by $\calo_S$. We stress that these sequences need not arise as CF-expansions nor be convergent. It is shown that, in any case, each of them \emph{formally} satisfies some degree-$2$ equation: this motivates the study of the varieties defined by each of these equations, called \emph{PCF varieties}. The authors particularly focus on the study of integral points in PCF varieties -- which are, by definition, the periodic continued fractions considered in the first place -- and, in particular, of those corresponding to \emph{converging} continued fractions.
\par We lay out a $p$-adic analogue of \cite{BrockElkies2021}, where $p$ is an odd prime. Various constructions of continued fractions can be carried out in the $p$-adic setting: the classical ones date back to Mahler~\cite{Mahler1940}, Ruban~\cite{Ruban1970}, Browkin~\cite{Browkin1978} and Schneider~\cite{Schneider1970}.  More recently, periodic $p$-adic continued fractions have been investigated by~\cites{Bedocchi1990}{Ooto02017}{CapuanoVenezianoZannier2019}{CapuanoMurruTerracini2023}{MurruRomeo2023}{Romeo2024}. %In~\cites{CapuanoMurruTerracini2022}, a more general definition is given. Namely, a sequence of rationals $[c_1,c_2,\dots ]$ is a $p$-adic continued fraction if $|c_i|_p>1$ and there exists a suitable map $s\colon \mathbb{Q}_p \rightarrow \mathbb{Q}$ (a $p$-adic analogue of the classical floor function~\cite[§3.1]{CapuanoMurruTerracini2022}) such that $c_i \in \mathrm{Im}(s)$ for each~$i$. Furthermore, replacing $\lfloor\cdot \rfloor$ by $s$ in the CF-algorithm~\eqref{eqn:CFalg} yields an algorithm to compute the \emph{$p$-adic continued fraction expansion} of any  $p$-adic number $\alpha$,\footnote{The triple $(\mathbb{Q}, p, s)$ is called a \emph{type} in~\cite{CapuanoMurruTerracini2022}. In this work, however, we will use the term `type' in the context of periodic CFs in order to be consistent with the terminology in~\cite{BrockElkies2021}.} i.e.\ a sequence $[c_1,c_2, \dots]$ such that the sequence of $n$-th convergents converges $p$-adically to $\alpha$.
%\begin{itemize}
   % \item Which conditions on $c_1,c_2,\dots$ ensure that $[c_1,c_2,\dots]$ converges to some $\alpha$?
   % \item If this is the case, does $[c_1,c_2,\dots]$ (eventually) equal  the $p$-adic continued fraction expansion of $\alpha$?
%\end{itemize}
\par In this work, %we consider an even wider definition of continued fractions, which takes only the former question into account: we will not deal with the existence of floor functions and/or `continued fraction expansion algorithms'. More precisely, 
we consider the $p$-adic convergence of \emph{any} eventually periodic continued fraction $[c_1,c_2,\dots ]$ with partial quotients $c_i\in \mathbb{Z}[1/p]$. We remark that our approach does not consider a fixed algorithm such as Ruban's or Browkin's to generate an expansion of any given number, but we consider every eventually periodic sequence of partial quotients. %; therefore, our results apply to the aforementioned expansions by Ruban and Browkin, regardless of the `floor function' or the `continued fraction expansion algorithm'. 
To this aim, we keep the same definition of PCF varieties as in~\cite{BrockElkies2021} and study their \emph{$p$-adically convergent locus}, i.e.\ the set of points corresponding to continued fractions which converge w.r.t.\ the $p$-adic---rather than the Archimedean---absolute value.

The main tool to study the $p$-adically convergent loci of PCF varieties is Theorem~\ref{criterio.di.convergenza.padica}, which provides a criterion for the $p$-adic convergence of a periodic continued fraction; the criterion requires checking finitely many $p$-adic inequalities involving the partial quotients in the periodic part of the continued fraction. 
\par Our notation slightly differs from that of~\cite{BrockElkies2021}: rather than labelling the PCF varieties by the multiset $\mathcal{B}=\{\beta,\beta^*\}$ consisting of the two roots of some polynomial $F= Ax^2+Bx+C\in\QQ[x]$, we will instead use $F$ itself and write $V(F)_{N,k}$. %(resp.\ $V(F)_{N,k}^{\mathrm{con}})$). %also by $V(A,B)_{N,k}$ or $V(\mathcal{B})_{N,k}$ (resp.\ $V(A,B)_{N,k}^{\mathrm{con}})$ or $V(\mathcal{B})_{N,k}^{\mathrm{con}})$)
This paper aims to describe the $p$-adic convergent locus of $V(F)_{N,k}$ in terms of the coefficients of $F$, for PCF varieties of small dimension. This is usually done in two steps: we first characterize the $\ZZ[1/p]$-rational points of $V(F)_{N,k}$, and then use Theorem~\ref{criterio.di.convergenza.padica} to see which of these points converge. As a result, for $(N,k)\in \{(0,1),(1,1),(0,2)\}$, we manage to explicitly enumerate the $p$-adically convergent points of $V(F)_{N,k}$ for any given $F$, expressing them as functions of the coefficients of $F$ (Propositions~\ref{prop:PCF01}, \ref{prop:PCF11}, and~\ref{prop:PCF02}). In the remaining cases, the relation between $p$-adically convergent points and $F$ becomes less transparent. For $V(F)_{2,1}$ and $V(F)_{0,3}$, we only prove the finiteness of the $p$-adic locus (Propositions~\ref{prop:21a}, \ref{prop:21b}, and \ref{prop:soluzioni0-3}), while in Section~\ref {subsec:12} we obtain a geometric description of  $V(F)_{1,2}(\ZZ[1/p])$ and, in Theorem~\ref{teo:cond1-2}, we specialize Theorem~\ref{criterio.di.convergenza.padica} to this case. Our results are summarized in Table~\ref{tab:summary}.

\begingroup
\setlength{\tabcolsep}{4pt}
\renewcommand{\arraystretch}{2} % <-- più spazio tra le righe
\setlength{\aboverulesep}{1.0ex}   % (facoltativo) più aria attorno alle regole
\setlength{\belowrulesep}{0.8ex}   % (facoltativo)
\begin{table}[t]
\centering
\small
\begin{tabular}{@{}lcc@{}}
\toprule
$(N,k)$ & $\#V(\ZZ[1/p])^{\mathrm{con}}$ & Main result \\
\midrule
\hyperref[sect:01]{(0,1)} & $\le 1$ & Prop.~\ref{prop:PCF01} \\
\hyperref[sect:11]{(1,1)} & $0\ \text{or}\ 2$ & Prop.~\ref{prop:PCF11} \\
\hyperref[sect:21]{(2,1)} &
$\begin{cases}
\text{$<\infty$} & \text{if } A\neq 0,\\
0\ \text{or}\ 2\  & \text{if } A=0
\end{cases}$ &
Props.~\ref{prop:21a}, \ref{prop:21b}\\
\hyperref[sect:02]{(0,2)} & $\le 1$ & Prop.~\ref{prop:PCF02} \\
\hyperref[sect:12]{(1,2)} &
$<\infty$ &
Cor.\,\ref{cor:Vcon12} \\
\hyperref[sect:03]{(0,3)} &
$<\infty$ &
Prop.~\ref{prop:soluzioni0-3} \\
% (1,3) & $\ge 1$; families $\infty$ & — \\
\bottomrule
\end{tabular}
\caption{Some of our results on the number of $p$-adically convergent points in $V(F)_{N,k}$, where $F=Ax^2+Bx+C$.}
\label{tab:summary}
\end{table}
\endgroup
  For the type $(0,3)$, in Section \ref{sssect:pureintegerradicals} we also focus on the ``pure radical'' case, i.e.\ when $F$ is a quadratic polynomial of the form $x^2-d$, with $d$ an integer. As shown in Remark~\ref{rem:empty03}, the relevant case is when $d>1$. %where an explicit arithmetic description is accessible.
 In this setting, the defining equations of the PCF give rise to generalized Pell equations, so that the existence and structure of convergent periodic $p$-adic expansions for $\sqrt d$ are ruled by solutions of Pell-type diophantine equations (see Proposition~\ref{prop:soluzioni0-3}).
The well-known theory of generalized Pell equations describes how families of solutions can be recursively constructed, via the unit group action, from a finite set of fundamental solutions. We use this theory as a tool that
(i) yields effective finiteness results by combining Pell recurrences with arithmetic constraints on perfect powers in linear recurrence sequences (Section~\ref{sec:finiteness03}) %\ref{teo:Petho} and \ref{thm:petho2}, and the discussion in Example~\ref{ex:4.24} for the case $d=5$)
and (ii) allows one to construct explicit families of $p$-adically convergent periodic continued fractions for $\sqrt d$ (Sections~\ref{sec:a2+1} and~\ref{sss:4.6.4}).

 Finally, in the case  $(1,3)$, we obtain explicit families of $p$-adically convergent periodic expansions for some quadratic irrationals. In particular, we construct convergent periodic continued fractions for $p$-adic square roots of infinitely many integers $d$, including examples of the form $\sqrt{p^2+1}$ and, more generally, of the form $\sqrt{\a^2+1}$. %These examples illustrate how the geometric viewpoint on PCF varieties interacts with the classical arithmetic of Pell equations and with the structure of certain linear recurrences. 

\section{Preliminaries}
\label{sec:preliminaries}
We recall some basic facts from \cite[\S2]{BrockElkies2021}.
Let $R$ be an integral domain and $K$ be its fraction field. Let $c_1,c_2,\dots$ be a sequence of elements in $R$. We define the classical sequences
\begin{align*}
    A_{0}=1,&& A_1=c_1, && A_n=A_{n-1}c_n + A_{n-2} &&\text{and}\\
    B_{0}=0, && B_1=1, && B_n=B_{n-1}c_n +B_{n-2} &&\text{for $n \geq 2$}.
    \end {align*}
     A \emph{finite continued fraction} is an element   of the form $[A_n : B_n]\in \mathbb{P}^1(K)$, which we shall denote by $  [c_1,c_2,\dotsc, c_n]$. This is always a well-defined point of  $\mathbb{P}^1(K)$, due to the classical formula $A_{n+1}B_n-A_{n}B_{n+1}=(-1)^{n+1}$.\\
    When $B_n \neq 0$, we identify this point with the value \[ [c_1,c_2,\dotsc, c_n]=c_1 + \cfrac{1}{c_2+ \cfrac{1}{\ddots + \cfrac{1}{c_n}}}=\frac{A_n}{B_n}\in K\]
    and we write $\infty$ for the point $[1:0]$. We define a \emph{continued fraction} $\mathcal{C}=[c_1,c_2,\dots]$ as the formal expression
\begin{align}
\label{eqn:generalCF}
    c_1 + \cfrac{1}{c_2+ \cfrac{1}{\ddots}}.
\end{align}

\par When $K$ is endowed with a metric,  denoting by $\widehat K$  its completion, we shall say that $\mathcal{C}$ \emph{converges} to $\alpha\in \PP^1(\widehat K)$ if 
$$\lim_{n\to\infty}[c_1,\ldots, c_n]=\alpha$$
in the standard topology of $ \PP^1(\widehat K)$.
\\

\iffalse
We shall identify an infinite continued fraction (in a given field) with the sequence of its partial quotients $$[c_1,c_2,\ldots].$$ We recall the classic sequences
\begin{align*}
    A_{0}=1,&& A_1=c_1, && A_n=A_{n-1}c_n + A_{n-2} &&\text{and}\\
    B_{0}=0, && B_1=1, && B_n=B_{n-1}c_n +B_{n-2} &&\text{for $n \geq 2$}.
    \end {align*}
The \emph{$n$-th convergent} is the fraction
\begin{align*}
[c_1,c_2,\dotsc, c_n]= c_1 + \cfrac{1}{c_2+ \cfrac{1}{\ddots + \cfrac{1}{c_n}}}=\frac{A_n}{B_n} \quad \text{for $n \geq 1$}.
\end{align*}

\begin{remark}
\label{rem:Qn}
$A_n/B_n$ is only defined when $B_n\neq 0$. We assume that no $c_i$ equals~$0$ for $i>1$. This assumption is not restrictive because $[\dots, c,0,c',\dotsc]=[\dots,c+c',\dots]$.%{\color{red} in this case we set $Q_n$ and its $p$-adic norm to be $\infty$. eliminare?}
\end{remark}

In fields with an absolute value, one can consider the convergence of the sequence $\{[c_1,\dots,c_n]\}_{n=1}^\infty$. If it converges, we say that the continued fraction converges. With a slight abuse of terminology, we shall also denote by $[c_1,c_2,\dotsc]$ its value. According to Remark~\ref{rem:Qn}, for convergent continued fractions $B_n$ must be non-zero for each $n\geq 1$.

%We say that $F=[c_1,c_2, \dots]$ is \emph{($p$adically) convergent} if $\{Q_n\}_{n=1}^\infty$ converges with respect to the $p$-adic norm.
\fi

Finally, for a continued fraction $\mathcal{C}=[c_1,c_2, \dots]$ we define the matrices
    \begin{align}
    \label{eqn:MnDn}
    D_n=D_n(\mathcal{C}) =\begin{pmatrix}
    c_n & 1 \\ 
    1 & 0
    \end{pmatrix}
   && \text{and}&&
     M_n=M_n(\mathcal{C}) =\begin{pmatrix}
    A_n & A_{n-1} \\ 
    B_n & B_{n-1}
    \end{pmatrix}
    && \text{for $n \geq 1$}.
\end{align}
If $\mathcal{C}=[c_1, \dots, c_n]$, we simply write $M(c_1, \ldots, c_n)$ instead of $M_n([c_1, \ldots, c_n])$.\\
The following properties of $D_n$ and $M_n$ are easily derived from the definitions.
\begin{proposition}
\label{prop:DnMn}
Let $\{c_n\}_{n \in \mathbb{N}}$ be any sequence of elements in $R$, and $M_n, D_n$ be defined as in~\eqref{eqn:MnDn}. Then, for each $n\geq 1$,
\begin{enumerate}[label=(\alph*)]
    \item $M_n=D_1\cdots D_{n}$,
    \item $M_n=M(c_1, \dots, c_j)M(c_{j+1},\dots, c_n)$ for each $j<n$,
    \item $M_n^{-1}=\begin{pmatrix}
        0 & 1 \\ 1 & 0
    \end{pmatrix}\cdot M(-c_1,\dots,-c_n)\cdot \begin{pmatrix}
        0 & 1 \\ 1 & 0
    \end{pmatrix}$,
    \label{DnMd:4}\item $M_n^T = M(c_n,\dotsc,c_1)$,
    \item $\det(M_n)=(-1)^{n}$.
\end{enumerate}
\end{proposition}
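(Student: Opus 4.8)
The plan is to make part (a) the cornerstone and deduce everything else from it. First I would prove (a) by induction on $n$. The base case $M_1 = D_1$ is immediate from $A_1 = c_1$, $A_0 = 1$, $B_1 = 1$, $B_0 = 0$. For the inductive step, assuming $M_{n-1} = D_1\cdots D_{n-1}$, I would multiply on the right by $D_n$ and read off, using the defining recurrences for $A_n$ and $B_n$, that
\[
M_{n-1}D_n = \begin{pmatrix} A_{n-1} & A_{n-2} \\ B_{n-1} & B_{n-2}\end{pmatrix}\begin{pmatrix} c_n & 1 \\ 1 & 0\end{pmatrix} = \begin{pmatrix} A_{n-1}c_n + A_{n-2} & A_{n-1} \\ B_{n-1}c_n + B_{n-2} & B_{n-1}\end{pmatrix} = M_n.
\]
This single computation encodes both recurrences at once and is the only place where the definitions of $A_n$ and $B_n$ enter directly.

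Once (a) is available, parts (b) and (e) are essentially free. For (e), taking determinants in (a) and using $\det D_i = -1$ gives $\det M_n = \prod_{i=1}^n \det D_i = (-1)^n$. For (b), the key point is that each factor $D_i$ depends only on $c_i$; hence applying (a) to the subsequence $(c_1,\dots,c_j)$ and to the shifted subsequence $(c_{j+1},\dots,c_n)$ identifies $D_1\cdots D_j = M(c_1,\dots,c_j)$ and $D_{j+1}\cdots D_n = M(c_{j+1},\dots,c_n)$, so splitting the product $D_1\cdots D_n = M_n$ at position $j$ yields the claim.

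Parts (d) and (c) each require one extra observation, namely how transposition and inversion interact with the product in (a); both reverse the order of the factors. For (d) I would use that every $D_i$ is symmetric, so $M_n^T = (D_1\cdots D_n)^T = D_n^T\cdots D_1^T = D_n\cdots D_1$, and then recognize the right-hand side, again via (a), as $M(c_n,\dots,c_1)$. For (c) I would compute $D_i^{-1} = \begin{pmatrix} 0 & 1 \\ 1 & -c_i\end{pmatrix}$ (using $\det D_i = -1$) and observe that, writing $J = \begin{pmatrix} 0 & 1 \\ 1 & 0\end{pmatrix}$, this equals $J\begin{pmatrix} -c_i & 1 \\ 1 & 0\end{pmatrix}J$, i.e.\ the conjugate by $J$ of the $D$-matrix with entry $-c_i$. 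Inverting (a) then gives $M_n^{-1} = D_n^{-1}\cdots D_1^{-1} = J\,M(-c_n,\dots,-c_1)\,J$, the inner copies of $J$ telescoping because $J^2 = I$.

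The individual matrix manipulations are all routine; the only step demanding genuine care is the bookkeeping of the order of the partial quotients, which is reversed by both transposition and inversion. In particular, the natural derivation of (c) from (a) produces $M(-c_n,\dots,-c_1)$ conjugated by $J$, with the partial quotients in \emph{reverse} order; one checks on a $2\times 2$ example that this reversal is essential, since the two orderings differ by an off-diagonal swap (indeed $J\,M(-c_1,\dots,-c_n)\,J$ equals the transpose of $M_n^{-1}$). Verifying that the order in the final identities matches the intended convention is where I would concentrate my attention.
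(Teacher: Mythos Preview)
Your argument is correct and is exactly the standard derivation the paper has in mind; the paper itself omits the proof, merely stating that these properties ``are easily derived from the definitions,'' so there is nothing further to compare.

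Your observation about part (c) is also correct and worth recording: the identity $M_n^{-1}=J\,M(-c_1,\dots,-c_n)\,J$ as printed is off by a transposition. Inverting $M_n=D_1\cdots D_n$ gives $M_n^{-1}=D_n^{-1}\cdots D_1^{-1}=J\,M(-c_n,\dots,-c_1)\,J$, and your $n=2$ check confirms that $J\,M(-c_1,\dots,-c_n)\,J=(M_n^{-1})^T$ rather than $M_n^{-1}$. Equivalently, combining with (d), the printed formula computes $(M_n^T)^{-1}$. Since (c) is not invoked elsewhere in the paper, this typo has no downstream effect, but your corrected version $M_n^{-1}=J\,M(-c_n,\dots,-c_1)\,J$ is the one that actually holds.
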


\subsection{PCF varieties}
\par A \emph{periodic continued fraction} (or \emph{PCF} for short) \emph{of type $(N,k)$} is a continued fraction of the form \[
\mathcal{P}=[\underbrace{b_1, \dots, b_N,}_\text{preperiod} \underbrace{a_1,..., a_k,}_\text{period}a_1,a_2,\dots],
\] 
and is usually denoted by $[b_1,\dots,  b_N,\overline{a_1,\dots,a_k}]$. When there is no preperiod, i.e.\ $N=0$, we say that $\mathcal{P}$ is \emph{purely periodic} (or \emph{PPCF} for short).

\par As shown in~\cite{BrockElkies2021}, each type $(N,k)$ and quadratic polynomial $F(x)\in K[x]$  give rise to an algebraic variety whose points represent  PCFs of type $(N,k)$, \emph{disregarding the matter of convergence}. 
\par Let $\mathcal{P}=[b_1,\dots,  b_N,\overline{a_1,\dots,a_k}]$ be a periodic continued fraction. We define the following $2 \times 2$ matrix  with coefficients in $R$:
\begin{equation}
    \label{eqn:E}
    E=\begin{pmatrix}
E_{11}(\mathcal{P})&E_{12}(\mathcal{P})\\
E_{21}(\mathcal{P})&E_{22}(\mathcal{P})
\end{pmatrix}
=M(b_1,\dots,b_N) M(a_1,\dots, a_k)M(b_1,\dots,b_N)^{-1}.
%M\bigl([b_1,\dots,b_N,a_1,\dots,a_k,0,-b_N,\dots,-b_1,0]\bigr).
\end{equation}
%By Proposition~\ref{prop:DnMn}, $E$ is conjugate to the matrix corresponding to the purely periodic part of $P$:
%\[E=.\]

To each PCF we associate the polynomial
\begin{equation}
    \label{eqn:quadf}
    \mathrm{Quad}(\mathcal{P})=E_{21}(\mathcal{P}) x^2 + (E_{22}-E_{11})(\mathcal{P})x - E_{12}(\mathcal{P})\in R[x].
\end{equation}
As in \cite{BrockElkies2021}, for a matrix $M=\begin{pmatrix} a & b\\ c & d\end{pmatrix} \in \mathrm{GL}_2(K)$, we shall denote by $\overline M$ the associated automorphism of $\PP^1$
$$\overline M(\alpha)=\frac {a\alpha+b}{c\alpha+d}.$$

We gather in Table~\ref{tab:polynomials} the polynomials $\mathrm{Quad}(\mathcal{P})$ for the types that will be considered in the remainder of this article.
%\begin{align}
 %   \mathrm{Quad}([\overline{a_1}])&=x^2 -a_1 x - 1\\
  %  \mathrm{Quad}([b_1,\overline{a_1}])&=x^2 + (a_1 - 2b_1)x + b_1^2 - a_1 b_1 - 1\\
  %  \mathrm{Quad}([\overline{a_1,a_2}])&=a_2 x^2 - a_1 a_2 x - a_1\\
   % \mathrm{Quad}([\overline{a_1,a_2,a_3}])&=(a_2 a_3 +1) x^2 + (-a_1 a_2 a_3-a_1+a_2-a_3)x -a_2 a_1 -1\\
   % \mathrm{Quad}([b_1,\overline{a_1,a_2}])&=a_1 x^2 + (a_2 a_1 -2 b_1 a_1)x-a_1 a_2 b_1+a_1 b_1^2-a_2 \\
   % \mathrm{Quad}([b_1,b_2,\overline{a_1}])&= (b_2 a_1 -b_2^2+1) x^2 + (-2a_1 b_1 b_2 +2 b_1b_2^2 -a_1-2 b_1+2 b_2)x+a_1 b_1^2 b_2 - b_1^2 b_2^2 +a_1 b_1 + b_1^2 - 2 b_2 b_1 -1 
%\end{align}
   \begin{center}
\begin{table}[ht]
 \begin{tabular}{ |l|l| } 
 \hline
 $\mathcal{P}$ & $\mathrm{Quad}(\mathcal{P})$ \\ 
 \hline
 $[\overline{a_1}]$ & $x^2 -a_1 x - 1$ \\ \hline 
 $[b_1,\overline{a_1}]$ & $x^2 + (a_1 - 2b_1)x + b_1^2 - a_1 b_1 - 1$\\
 \hline
 $[b_1,b_2,\overline{a_1}]$ & \makecell{$(b_2 a_1 -b_2^2+1) x^2 + (-2a_1 b_1 b_2 +2 b_1b_2^2 -a_1-2 b_1+2 b_2)x+$\\ $a_1 b_1^2 b_2 - b_1^2 b_2^2 +a_1 b_1 + b_1^2 - 2 b_2 b_1 -1$}\\
 \hline
 $[\overline{a_1,a_2}]$ & $a_2 x^2 - a_1 a_2 x - a_1$\\ \hline
 $[b_1,\overline{a_1,a_2}]$ & $a_1 x^2 + (a_2 a_1 -2 b_1 a_1)x-a_1 a_2 b_1+a_1 b_1^2-a_2$\\ 
 \hline
  $[\overline{a_1,a_2,a_3}]$ & $(a_2 a_3 +1) x^2 + (-a_1 a_2 a_3-a_1+a_2-a_3)x -a_2 a_1 -1$\\
\hline
  $[b_1,\overline{a_1,a_2,a_3}]$ & \makecell{$(a_1a_2+1) x^2 + (a_1a_2a_3-2a_1a_2b_1+a_1-a_2+a_3-2b_1)x $ \\ $ -a_1a_2a_3b_1+a_1a_2b_1^2-a_1b_1-a_2a_3+a_2b_1-a_3b_1+b_1^2-1 $}\\
\hline
\end{tabular}
 \caption{The polynomial $\mathrm{Quad}(\mathcal{P})$ for some small values of $N,k$.}
 \label{tab:polynomials}
\end{table}
\end{center}
If $K$ is a normed field and $\mathcal{P}$ converges to $\alpha\in\PP^1(\widehat K)$, then it is clear that $\overline{E}(\alpha)=\alpha$, i.e.\ $\alpha$ must be a fixed point of the automorphism 
$\overline{E}$. This justifies the following 
\begin{proposition}
[Proposition 2.9 \cite{BrockElkies2021}] Assume that $K$ is a topological field. If $\mathcal{P}$ converges to $\alpha\in\PP^1(\widehat{K})$, then $\alpha$ is a root of the polynomial $\mathrm{Quad}(\mathcal{P})$.

\end{proposition}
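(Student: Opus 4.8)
The plan is to exploit the self-similar structure of a periodic continued fraction together with the matrix encoding already set up in~\eqref{eqn:E}. Write $\mathcal{P}=[b_1,\dots,b_N,\overline{a_1,\dots,a_k}]$ and suppose it converges to $\alpha\in\PP^1(\widehat K)$. The key observation is that the ``tail'' of $\mathcal{P}$ obtained by deleting the preperiod, namely the purely periodic fraction $\mathcal{Q}=[\overline{a_1,\dots,a_k}]$, must itself converge to some $\gamma\in\PP^1(\widehat K)$, and $\alpha=\overline{M(b_1,\dots,b_N)}(\gamma)$. Indeed, the $n$-th convergents of $\mathcal{P}$ and of $\mathcal{Q}$ are related by the fixed finite matrix $M(b_1,\dots,b_N)$ via Proposition~\ref{prop:DnMn}(b), and since $\overline{M(b_1,\dots,b_N)}$ is a homeomorphism of $\PP^1(\widehat K)$ (here I use that $K$ is a topological field and that $M(b_1,\dots,b_N)\in\mathrm{GL}_2(K)$), convergence of one sequence of convergents is equivalent to convergence of the other, with limits matched by $\overline{M(b_1,\dots,b_N)}$.

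First I would establish that $\gamma$ is a fixed point of $\overline{M(a_1,\dots,a_k)}$. This is the heart of the argument: the convergents of $\mathcal{Q}$ at indices that are multiples of the period, $[\underbrace{a_1,\dots,a_k,\dots,a_1,\dots,a_k}_{m\ \mathrm{periods}}]$, equal $\overline{M(a_1,\dots,a_k)^m}([1:0])$, again by repeated application of Proposition~\ref{prop:DnMn}(b). Passing to the limit as $m\to\infty$ and using continuity of $\overline{M(a_1,\dots,a_k)}$, one gets $\overline{M(a_1,\dots,a_k)}(\gamma)=\gamma$, because applying one more copy of the periodic block to a sequence converging to $\gamma$ must again converge to $\gamma$ (the shifted subsequence has the same limit). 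Thus $\gamma$ is fixed by $\overline{M(a_1,\dots,a_k)}$.

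Next I would transport this fixed-point property through the preperiod conjugation. Since $E=M(b_1,\dots,b_N)\,M(a_1,\dots,a_k)\,M(b_1,\dots,b_N)^{-1}$ and $\alpha=\overline{M(b_1,\dots,b_N)}(\gamma)$, conjugation-invariance of fixed points gives
\[
\overline{E}(\alpha)=\overline{M(b_1,\dots,b_N)}\bigl(\overline{M(a_1,\dots,a_k)}(\gamma)\bigr)=\overline{M(b_1,\dots,b_N)}(\gamma)=\alpha,
\]
so $\alpha$ is a fixed point of the Möbius transformation $\overline{E}$. Finally I would translate the fixed-point equation $\overline{E}(\alpha)=\alpha$ into the statement that $\alpha$ is a root of $\mathrm{Quad}(\mathcal{P})$: writing $\alpha$ in an affine chart, $\overline{E}(\alpha)=\alpha$ reads $\tfrac{E_{11}\alpha+E_{12}}{E_{21}\alpha+E_{22}}=\alpha$, which clears denominators to $E_{21}\alpha^2+(E_{22}-E_{11})\alpha-E_{12}=0$, exactly $\mathrm{Quad}(\mathcal{P})(\alpha)=0$ by~\eqref{eqn:quadf}; the case $\alpha=\infty=[1:0]$ is handled separately and corresponds to $E_{21}=0$, i.e.\ the leading coefficient of $\mathrm{Quad}(\mathcal{P})$ vanishing.

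I expect the main obstacle to be the careful treatment of convergence in $\PP^1(\widehat K)$ rather than in the affine line: I must argue entirely with points of $\PP^1(\widehat K)$ and the homeomorphisms $\overline{M}$, so as to accommodate the point at infinity and the possibility that some intermediate convergents are $\infty$ (where $B_n=0$). The cleanest route is to note that each $\overline{M}$ with $M\in\mathrm{GL}_2(K)$ extends to a homeomorphism of $\PP^1(\widehat K)$, making all the limit manipulations above legitimate without case distinctions; the reduction to the affine equation $\mathrm{Quad}(\mathcal{P})(\alpha)=0$ is then the only place where the chart, and hence the $\alpha=\infty$ case, needs explicit attention.
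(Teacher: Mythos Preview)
Your proposal is correct and follows exactly the line the paper indicates: the paper does not give a detailed proof but simply remarks, just before stating the proposition, that ``it is clear that $\overline{E}(\alpha)=\alpha$'' and then cites \cite{BrockElkies2021}. Your argument is a careful unpacking of precisely this observation---reduce to the purely periodic tail, use continuity of the M\"obius action to get a fixed point of $\overline{M(a_1,\dots,a_k)}$, conjugate by the preperiod matrix to obtain $\overline{E}(\alpha)=\alpha$, and read off $\mathrm{Quad}(\mathcal{P})(\alpha)=0$---so there is no substantive difference in approach.
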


 In order to define the roots of $\mathrm{Quad}(\mathcal{P})$, we view quadratic polynomials as homogeneous polynomials $AX^2 + BXY + CY^2$, so that their roots are well-defined in $\mathbb{P}^1(\overline{K})$ (except when $A=B=C=0$, which we discuss in Remark~\ref{rem:zeropol}). To ease notation, we will keep the affine notation and say that $\infty$ is a root
of $0x^2 + Bx + C$ and a double root of $0x^2 + 0x + C$. With this convention, either $\mathrm{Quad}(\mathcal{P})$ has two roots (counted with multiplicity) or it is the zero polynomial.

\par Thus we have attached a polynomial to each PCF. Now, following~\cite[§3]{BrockElkies2021}, we want to do the opposite: given a (non-zero) quadratic polynomial $F(x)=Ax^2+Bx+C\in K[x]$ and a type $(N,k)$, we consider the set of $(N+k)$-tuples $(b_1,\dots,  b_N,a_1,\dots,a_k)$ corresponding to periodic continued fractions $\mathcal{P}=[b_1,\dots,  b_N,\overline{a_1,\dots,a_k}]$ such that the polynomial $\mathrm{Quad}(\mathcal{P})$ from~\eqref{eqn:quadf} is  multiple of $F(x)$ by a non-zero scalar. They all belong to $V(F)_{N,k}(R)$, where $V(F)_{N,k}$ is the algebraic variety in $\AA^{N+k}$ defined by the equations
\begin{equation} \label{eq:VF}
%    U(E_{22} - E_{11})(y_1, \dots, y_N, x_1, \dots, x_k) &= VE_{21}(y_1, \dots, y_N, x_1, \dots, x_k),\\
%-AE_{12}(y_1, \dots, y_N, x_1, \dots, x_k) &= WE_{21}(y_1, \dots, y_N, x_1, \dots, x_k),\\
%-VE_{12}(y_1, \dots, y_N, x_1, \dots, x_k) &= W(E_{22} - E_{11})(P).
\begin{cases} 
    A(E_{22} - E_{11})(\mathcal{P}) = BE_{21}(\mathcal{P}),\\
-AE_{12}(\mathcal{P}) = CE_{21}(\mathcal{P}),\\
-BE_{12}(\mathcal{P}) = C(E_{22} - E_{11})(\mathcal{P}),
\end{cases}
\end{equation}
which has generically dimension $N+k-2$ according to~\cite[§3.1 a)]{BrockElkies2021}.
%where a point $(b_1,\dots,  b_N,a_1,\dots,a_k)\in \AA^{N+k}$ represents the PCF $[b_1,\dots,  b_N,\overline{a_1,\dots,a_k}]$.

\subsection{\texorpdfstring{$p$}{p}-adically convergent loci}
From now on we will focus on the $p$-adic setting, for~$p$ a fixed odd prime. We will denote by $|\cdot|_p$ the $p$-adic absolute value, and by $v_p(\cdot)$ the $p$-adic valuation.

 Having attached points of $\AA^{N+k}$ to (formal) continued fractions, we may now study their $p$-adic convergence. Inspired by the most studied continued fraction algorithms in the $p$-adic setting, from now on we will consider partial quotients in $R=\ZZ\left[1/p\right]$. To ease the notation, we will denote $\ZZ\left[1/p\right]$ by $\calo$.  Then we denote by $V(F)_{N,k}^{\mathrm{con}}$ the \emph{$p$-adically convergent locus of $V(F)_{N,k}(\calo)$}, i.e.\ the subset of $V(F)_{N,k}(\calo)$ consisting of the points  corresponding to $p$-adically convergent PCFs.
\begin{remark}
    If $F$ has no root in $\PP^1(\mathbb{Q}_p)$, then $V(F)_{N,k}^{\mathrm{con}}$ is trivially empty.
\end{remark}

\begin{remark}
\label{rem:zeropol}
    The case in which $F$ is the zero polynomial corresponds to periodic continued fractions $\mathcal{P}=[b_1,\dots,  b_N,\overline{a_1,\dots,a_k}]$ such that the matrix $E$, or, equivalently, $M(a_1,\dots, a_k)$ (note that this does not depend on the pre-period), is a multiple of the identity matrix. The corresponding variety, denoted by $V_{N,k}$, is described in~\cite[Prop.\,3.2]{BrockElkies2021}. An immediate consequence of Theorem~\ref{criterio.di.convergenza.padica} is that its convergent locus is empty.
\end{remark}

\begin{proposition}\label{prop:scambio}
Let $F=A x^2 +B x +C$ be a polynomial and $G=C x^2 -Bx +A$; let $k$ be a positive integer. The varieties $V_{0,k}(F)$ and $V_{0,k}(G)$ are isomorphic through the map
\begin{align*}
    \sigma_k: V_{0,k}(F)&\to V_{0,k}(G)\\
    (a_1,\dotsc, a_k)&\mapsto (a_k,\dotsc,a_1)
\end{align*}
\end{proposition}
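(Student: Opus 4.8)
The plan is to show that the map $\sigma_k$ is a well-defined morphism sending $V_{0,k}(F)$ into $V_{0,k}(G)$, with inverse given by $\sigma_k$ again (up to the $F\leftrightarrow G$ swap), so that it is an isomorphism. The key observation is the effect of reversing the order of partial quotients on the matrix $M(a_1,\dots,a_k)$. By Proposition~\ref{prop:DnMn}\ref{DnMd:4}, we have $M(a_k,\dots,a_1)=M(a_1,\dots,a_k)^T$, so reversing the period transposes the governing matrix. Since $N=0$, the matrix $E$ in~\eqref{eqn:E} is simply $E=M(a_1,\dots,a_k)$, and for the reversed sequence it becomes $E^T$.

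First I would compute how $\mathrm{Quad}(\mathcal{P})$ transforms under transposition of $E$. Writing $E=\left(\begin{smallmatrix} E_{11} & E_{12}\\ E_{21} & E_{22}\end{smallmatrix}\right)$, the transpose swaps $E_{12}$ and $E_{21}$ while fixing $E_{11}$ and $E_{22}$. Plugging into~\eqref{eqn:quadf}, the polynomial for the reversed PCF $\mathcal{P}'=[\overline{a_k,\dots,a_1}]$ is
\begin{equation*}
\mathrm{Quad}(\mathcal{P}') = E_{12}\,x^2 + (E_{22}-E_{11})\,x - E_{21}.
\end{equation*}
Comparing with $\mathrm{Quad}(\mathcal{P}) = E_{21}\,x^2 + (E_{22}-E_{11})\,x - E_{12}$, I see that the leading and constant coefficients are swapped and negated in a way that matches exactly the passage from $F=Ax^2+Bx+C$ to $G=Cx^2-Bx+A$: if $\mathrm{Quad}(\mathcal{P})$ is a nonzero scalar multiple $\lambda F$ of $F$, then $E_{21}=\lambda A$, $E_{22}-E_{11}=\lambda B$, $E_{12}=-\lambda C$, whence $\mathrm{Quad}(\mathcal{P}')=-\lambda C\,x^2+\lambda B\,x-\lambda A = -\lambda(Cx^2-Bx+A)=-\lambda G$. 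Thus $\mathrm{Quad}(\mathcal{P}')$ is a nonzero scalar multiple of $G$ precisely when $\mathrm{Quad}(\mathcal{P})$ is a nonzero scalar multiple of $F$, which is the defining condition for membership in the respective varieties.

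This computation shows directly that $(a_1,\dots,a_k)\in V_{0,k}(F)(R)$ if and only if $(a_k,\dots,a_1)\in V_{0,k}(G)(R)$, so $\sigma_k$ maps the first variety into the second. To promote this from a bijection on points to an isomorphism of varieties, I would note that $\sigma_k$ is evidently a linear (hence polynomial) automorphism of $\AA^k$ with polynomial inverse, so it suffices to verify that it respects the defining equations~\eqref{eq:VF}; but this is precisely what the coefficient comparison above establishes once those equations are rewritten as the statement that $\mathrm{Quad}(\mathcal{P})$ is proportional to $F$. The inverse is $\sigma_k$ itself viewed as a map $V_{0,k}(G)\to V_{0,k}(F)$, since applying the construction to $G$ returns a polynomial proportional to $G'=A x^2+Bx+C=F$ (note $G\mapsto G'$ recovers $F$), and reversing a sequence twice is the identity. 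I do not expect a genuine obstacle here; the only point requiring care is the bookkeeping with the scalar $\lambda$ and the signs, and the correct handling of the degenerate cases where some coefficients vanish (so that $\infty$ is a root), which are covered by the homogeneous-polynomial convention introduced before Remark~\ref{rem:zeropol}.
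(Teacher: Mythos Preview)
Your proof is correct and follows essentially the same approach as the paper: both rely on Proposition~\ref{prop:DnMn}\ref{DnMd:4} to identify $M(a_k,\dots,a_1)$ with $E^T$, and then track how the defining data transform under transposition. The paper phrases this directly in terms of the three equations~\eqref{eq:VF} (transposition swaps the first and third and changes the sign of the second, after replacing $(A,B,C)$ by $(C,-B,A)$), whereas you compute the effect on $\mathrm{Quad}(\mathcal{P})$ and the proportionality constant $\lambda$; these are equivalent formulations of the same argument.
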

This is a consequence of Proposition~\ref{prop:DnMn}\ref{DnMd:4}: transposing the matrix $E$, exchanging $A$ with $C$ and changing the sign of $B$ swaps the first and last equation of \eqref{eq:VF} and changes the sign of the middle one.

\section{Convergence criterion for PCF varieties}
\label{sec:convergence}
Now we prove a criterion for the $p$-adic convergence of a purely periodic continued fraction; this can be seen as a $p$-adic analogue of \cite[Thm.\,4.3]{BrockElkies2021}. The convergence of $p$-adic continued fractions in various settings  has already been extensively considered. Our result, concerning only periodic continued fractions, states a simple criterion which is both necessary and sufficient; this partially overlaps with the sufficient criteria given in \cite[Lemma 1]{Browkin1978}, \cite[Lemma 1]{Browkin2000}, and \cite{Murru2023}.

 From now on we will denote by $p$ a fixed odd prime. Since it is clear that the convergence of a PCF depends only on its periodic part, we can temporarily restrict our attention to a $\PPCF$ $\mathcal{P}=[\overline{a_1,\ldots, a_{k}}]$, with $a_1,\ldots, a_k\in\calo$. In this case, the matrix $E$ defined in~\eqref{eqn:E} is simply $M_k$ and, consequently, the polynomial $\mathrm{Quad}(\mathcal{P})$ from \eqref{eqn:quadf} is $B_{k}x^2+ (B_{k-1}-A_{k})x - A_{k-1}$.

%I think the following part can be skipped
\iffalse
\begin{align*}
Q_P: \quad & x^2+ (A_{k}+B_{k-1})x  +(-1)^k.
\end{align*}
$Q_P$ is the characteristic polynomial of $M_k$, while the roots of $\mathrm{Quad}(P)$ give the fixed points of the fractional linear transformation associated with $M_k$.

Let $P=[\overline{a_1,\ldots, a_{k}}]$ be a $\PPCF$. Then for every $r,s\geq 0$
\begin{equation*} M_{r(k)+s}=M_k^rM_s
\end{equation*}

Let $F=[a_1,a_2,\ldots]$ be a continued fraction.
Then $\frac{A_n}{A_{n-1}}$ is the $n$-th convergent of $[a_n,\ldots,a_0]$ for $n\geq 0$ and $\frac{B_n}{B_{n-1}}$ is the $n-1$-th convergent of $[a_n,\ldots,a_1]$ for $n\geq 1$.

 We also define $P^*=[\overline{a_{k},\ldots,a_{1}}]$ and $P^c=[0,-P^*]=[0,\overline{-a_{k},\ldots, -a_{1}}]$. 
Then $-\frac{B_{N-2}}{B_{N-1}}$ is the $N-1$-th convergent of $P^c$.

\begin{proposition}-
Let $P=[\overline{a_1,\ldots, a_{k}}]$ be a $\PPCF$. If $P$ is convergent, then $P^c$ is convergent too and the limits are the two roots of $\mathrm{Quad}(P)$.
\end{proposition}
\begin{proof}

\end{proof}
Sia $K$ campo di numeri, $\alpha$ irrazionale quadratico su $K$, $\mathcal{B}=\{\alpha,\alpha^c\}$. Considero $V:=V(\mathcal{B})_{0,d}$ (periodiche pure)

\fi

\begin{theorem}\label{criterio.di.convergenza.padica}
Let $\mathcal{P}=[\overline{a_1,\ldots , a_{k}}]$ be a $\PPCF$. Then $\mathcal{P}$ is $p$-adically convergent if and only if the following  conditions are satisfied:
\begin{enumerate}[label=(\alph*)]
    \item[(i)] $|A_{k}+B_{k-1}|_p>1$;
     \iffalse \item[ii)] for $j=1,\ldots k$, if $M_{12}(a_j,\ldots a_{j+k-1})=0$ then $|M_{11}(a_j,\ldots a_{j+k-1})|_p <1$.\fi
      \item[(ii)] for $j=1,\ldots, k$, if $M(a_j,\dotsc , a_{j+k-1})_{21}=0$ then $|M(a_j,\dotsc , a_{j+k-1})_{22}|_p <1$.

    \end{enumerate}
\end{theorem}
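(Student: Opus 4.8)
The plan is to analyze the $p$-adic convergence of the purely periodic continued fraction $\mathcal{P}=[\overline{a_1,\ldots,a_k}]$ by exploiting the periodicity encoded in the matrix $M_k$. The key structural fact is that the $n$-th convergent matrices repeat their block structure: for $n=rk+s$ we have $M_n = M_k^r \, M_s$, so the behavior of the convergents $A_n/B_n$ as $n\to\infty$ is governed by the powers $M_k^r$. Since $\det M_k = (-1)^k$ by Proposition~\ref{prop:DnMn}(e), the matrix $M_k$ lies in $\mathrm{GL}_2(\mathcal{O})$, and its $p$-adic dynamics on $\mathbb{P}^1(\mathbb{Q}_p)$ are controlled by its eigenvalues. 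The trace of $M_k$ is $A_k + B_{k-1}$, so condition (i), namely $|A_k+B_{k-1}|_p>1$, is precisely the statement that the characteristic polynomial $\lambda^2 - (A_k+B_{k-1})\lambda + (-1)^k$ has two roots of distinct $p$-adic valuation (one with negative valuation, one with positive valuation summing to $v_p((-1)^k)=0$). This is the regime in which $M_k$ is $p$-adically \emph{hyperbolic}: under iteration, $\overline{M_k^r}$ converges on $\mathbb{P}^1(\mathbb{Q}_p)$ to the attracting fixed point, for every starting point outside the repelling fixed point.

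First I would make condition (i) precise by diagonalizing $M_k$ over $\mathbb{Q}_p$ (or a quadratic extension). When $|A_k+B_{k-1}|_p>1$, the two eigenvalues $\lambda_+,\lambda_-$ satisfy $\lambda_+\lambda_-=(-1)^k$ and $|\lambda_+|_p = |A_k+B_{k-1}|_p > 1 > |\lambda_-|_p$; in particular both eigenvalues and the eigenvectors lie in $\mathbb{Q}_p$ (the discriminant $(A_k+B_{k-1})^2 - 4(-1)^k$ is a $p$-adic unit times a square since the dominant term has even valuation, so it is a square in $\mathbb{Q}_p$ for $p$ odd). Writing the fixed points of $\overline{M_k}$ as the two eigendirections, I would show that $\overline{M_k^r}(z) \to \zeta_+$ for every $z\neq \zeta_-$, where $\zeta_+$ is the attracting fixed point corresponding to $\lambda_+$. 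Conversely, if $|A_k+B_{k-1}|_p \le 1$, then $M_k$ is either elliptic or parabolic $p$-adically: the eigenvalues have equal valuation, $\overline{M_k^r}$ does not converge (it either cycles without limit or, in the parabolic case, fails to produce a convergent sequence of full convergents), and I would rule out convergence in these cases.

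The subtle point handled by condition (ii) is that convergence of $[c_1,\ldots,c_n]$ requires not merely that $\overline{M_n}([1:0])=[A_n:B_n]$ converge in $\mathbb{P}^1(\mathbb{Q}_p)$, but that it converge to a genuine limit that is the \emph{same} along every residue class $n\equiv s \pmod k$, and in particular that the sequence not land at $\infty=[1:0]$ infinitely often in a way that breaks convergence. Using $M_{rk+s}=M_k^r M_s$, the convergent along the residue class $s$ is $\overline{M_k^r}\bigl(\overline{M_s}([1:0])\bigr)$; since $\overline{M_s}([1:0])=[A_s:B_s]$, each of the $k$ subsequences converges to $\zeta_+$ \emph{provided} $\overline{M_s}([1:0]) \ne \zeta_-$ for all $s$. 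The condition $\overline{M_s}([1:0])=\zeta_-$ is exactly the obstruction: the starting point of the $s$-th subsequence sits on the repelling direction. I would reformulate this by observing, via Proposition~\ref{prop:DnMn}(a),(b), that the relevant quantity $M(a_j,\ldots,a_{j+k-1})$ (a cyclic shift of the period, whose product is conjugate to $M_k$) governs whether the tail starting at position $j$ avoids the repelling fixed point; the hypothesis $M(a_j,\ldots,a_{j+k-1})_{21}=0$ singles out the degenerate case where this shifted matrix is upper-triangular, i.e.\ fixes $\infty$, and then $|M(a_j,\ldots,a_{j+k-1})_{22}|_p<1$ is the requirement that $\infty$ be the attracting rather than repelling fixed direction (so the corresponding convergent is pushed toward the finite limit rather than stuck at $\infty$).

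\textbf{Main obstacle.} The hard part will be the careful bookkeeping of the $k$ residue classes and the precise handling of the degenerate $21$-entry-zero cases, where the naive diagonalization breaks down because $\infty$ is itself a fixed point of a cyclic shift. I expect the cleanest route is to prove a single clean lemma on $p$-adic hyperbolic dynamics: if $N\in\mathrm{GL}_2(\mathbb{Q}_p)$ has $|\mathrm{tr}\,N|_p > |\det N|_p^{1/2}$ (equivalently eigenvalues of distinct valuation), then $\overline{N^r}(z)$ converges to the attracting fixed point for all $z$ off the repelling one, with an explicit description of the repelling point as the kernel direction of $N - \lambda_- I$; then I would apply this with $N=M_k$ and track, for each shift $j$, whether the initial point $[A_s:B_s]$ (or equivalently the point determined by the cyclically shifted matrix) coincides with the repelling fixed point, translating that coincidence into the triangular condition $M(a_j,\ldots,a_{j+k-1})_{21}=0$ together with the valuation inequality in (ii). Matching the two fixed points of the shifted matrices to those of $M_k$ consistently across all $j$, and verifying that (ii) is both necessary and sufficient to avoid every repelling direction simultaneously, is where the genuine work lies.
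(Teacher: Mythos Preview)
Your plan is correct and follows the same underlying strategy as the paper: both arguments rest on the $p$-adic hyperbolic dynamics of the period matrix, using that condition (i) forces one eigenvalue to be dominant and condition (ii) ensures no residue class of convergents is pinned to the repelling fixed direction.

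The one organizational difference is worth noting. You factor $M_{rk+s}=M_k^r M_s$ and iterate the single matrix $M_k$ on the $k$ initial points $[A_s:B_s]$, so that condition (ii) becomes, after a conjugacy translation, the statement ``$[A_{j-1}:B_{j-1}]\neq\zeta_-$ for every $j$''. The paper instead factors $M_{\ell k+j-1}=M_{j-1}T_j^\ell$ with $T_j=M(a_j,\dotsc,a_{j+k-1})$ and iterates each cyclic shift $T_j$ on $\infty$; since the $T_j$ are mutually conjugate (same eigenvalues), condition (ii) is then read off directly: $(T_j)_{21}=0$ says $\infty$ is a fixed point of $T_j$, and $|(T_j)_{22}|_p<1$ says it is the attracting one. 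This saves you exactly the translation step you flagged as the ``main obstacle'', so you may find it cleaner to switch to the $T_j$-formulation. For the necessity of (ii), the paper also exploits the fact that each shifted PPCF $\mathcal{P}_j=[\overline{a_j,\dotsc,a_{j+k-1}}]$ converges whenever $\mathcal{P}$ does, reducing to the case $j=1$; in your framework you would instead need the observation that consecutive convergents are always distinct (from $A_nB_{n-1}-A_{n-1}B_n=(-1)^n$) to produce two residue classes with different limits. Finally, for the necessity of (i) your sketch (``elliptic or parabolic, so no convergence'') is right in spirit, but you will need the explicit case analysis the paper carries out---in particular taking $\ell=p^h$ in the non-diagonalizable cases and showing that $(\mu\nu^{-1})^\ell$ cannot converge when $|\mu|_p=|\nu|_p=1$ in the diagonalizable case.
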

\begin{proof}
For $j=1,\ldots , k$ we define the continued fraction $\mathcal{P}_j=[\overline{a_j,\ldots a_{k+j-1}}]$  and the matrix $T_j=M(a_j,\ldots a_{k+j-1})$.We observe that each $T_j$ is conjugate to $T_1$ because, by Proposition~\ref{prop:DnMn},
\[T_1= D_1 \cdots D_{k}=\left( D_1 \cdots D_{j-1}\right) \underbrace{\left( D_j \cdots D_{k}\right) \left( D_1 \cdots D_{j-1}\right)}_{T_j} \left( D_1 \cdots D_{j-1}\right)^{-1}. \]
Therefore, since the characteristic polynomial of a matrix is invariant under conjugation,
$$\mbox{Tr}(T_j)=A_k+B_{k-1},\quad \hbox{for } j=1,\ldots, k.$$
First, we prove that, if $\mathcal{P}$ is convergent, then $i$) holds. Suppose by contradiction that $\mathcal{P}$ is convergent and $|A_{k}+B_{k-1}|_p\leq 1$. The characteristic polynomial of the matrix
$$M_{k}=\begin{pmatrix} A_{k} &A_{k-1}\\ B_{k} & B_{k-1}\end{pmatrix}$$
has two roots $\mu,\nu$ such that $\mu\nu=(-1)^k$ and $|\mu+\nu|_p\leq 1$, therefore $|\mu|_p=|\nu|_p=1$. 
We will show that this contradicts the convergence assumption by distinguishing three cases.
\begin{enumerate}[label=(\alph*)]
\item $\mu=\nu$ and $M_{k}$ is a scalar matrix;
    then, for any $\ell>0$, $M_k^\ell$ is
    \begin{align*}
        M_{k}^\ell&= \begin{pmatrix}  \mu^\ell & 0\\ 0 & \mu^\ell\end{pmatrix}.
    \end{align*}
    Now, for any $\ell>0$, the quotient of the first column, i.e.\ the $k\ell$-th convergent of $\mathcal{P}$ is $\infty$, while the quotient of the second column, i.e.\ the preceding convergent, is~$0$, which contradicts the assumption of convergence.
    \item  $\mu=\nu$ and  $M_{k}$ is conjugate to $\mu\cdot \begin{pmatrix}  1 & 1\\ 0 & 1\end{pmatrix}$.
    Let $\mathcal{A}=\begin{pmatrix} x&y\\z&w\end{pmatrix}$ be a matrix such that 
    $$M_{k}=\mathcal{A}^{-1}\mu\cdot \begin{pmatrix}  1 & 1\\ 0 & 1\end{pmatrix}\mathcal{A};$$
    then, for any $\ell>0$,
    \begin{align*}
        M_{k}^\ell&=\mu^\ell \mathcal{A}^{-1}\cdot \begin{pmatrix}  1 & \ell\\ 0 & 1\end{pmatrix}\mathcal{A}\\
        &= \frac {\mu^\ell} {\det(\mathcal{A})} \begin{pmatrix} \det(\mathcal{A})+ zw\ell & w^2 \ell\\ -z^2\ell & \det(\mathcal{A})- zw\ell\end{pmatrix}.
    \end{align*}
    Now, for $\ell=p^h$, the quotient of the first column, i.e.\ the $kp^h$-th convergent of $\mathcal{P}$, tends to $\infty$, while the quotient of the second column, i.e.\ the preceding convergent, tends to $0$, which contradicts the assumption of convergence.
    
   \item \label{itm:convergence}if $\mu\not=\nu$, then $M_{k}$ is diagonalizable. We first prove that $B_k\not =0$; indeed, if this is not the case, then  we would have $$M_k=\begin{pmatrix} \mu & \delta \\ 0 &\nu\end{pmatrix},$$ so that, for $\ell>0$,
    \begin{equation}
        \label{eqn:eigenvalues}
        M_k^\ell =\begin{pmatrix} \mu^\ell & \delta\sum_{i=0}^{\ell-1}\mu^i\nu^{\ell-1-i} \\ 0 &\nu^\ell\end{pmatrix}.
    \end{equation} Therefore the $k\ell$-th convergent is $\infty$, while the $(k\ell-1)$-th convergent has a $p$-adic valuation $\geq v_p(\delta)$ (recall that $|\mu|_p=|\nu|_p=1$), contradicting the assumption of convergence. \\Therefore the second component of each eigenvector is not zero, so that we can write two linearly independent eigenvectors as 
   \begin{equation}\label{eq:eigenvectors} \begin{pmatrix} \beta \\ 1\end{pmatrix},\quad\quad \begin{pmatrix} \beta^* \\ 1\end{pmatrix}\end{equation}
   with $\beta, \beta^* \in \mathbb{Q}_p$.\\
    Then,
    \begin{equation}\label{eq:Mkell} \begin{split}
        M_{k}^\ell& =\begin{pmatrix} \beta & \beta^*\\ 1 & 1 \end{pmatrix} \begin{pmatrix} \mu^\ell & 0\\ 0 & \nu^\ell\end{pmatrix} \begin{pmatrix} \beta & \beta^*\\ 1 & 1 \end{pmatrix}^{-1}\\
        &=\frac 1 {\beta-\beta^*}\begin{pmatrix}\beta \mu^\ell-\beta^*\nu^\ell & -\beta\beta^*(\mu^\ell-\nu^\ell)\\
        \mu^\ell-\nu^\ell & \nu^\ell\beta-\mu^\ell\beta^*
        \end{pmatrix}.
    \end{split}
    \end{equation}
The ratio of the first column is
$\frac{\beta(\mu\nu^{-1})^\ell-\beta^*}{(\mu\nu^{-1})^\ell-1}$. By assumption this ratio $p$-adically converges for $\ell\to \infty$. Then, since $\beta\not=\beta^*$ also $(\mu\nu^{-1})^\ell$ $p$-adically converges, but this implies $|\mu\nu^{-1}|_p<1$, a contradiction.%\footnote{Si ha $x^\ell$ converge se e solo se $x=1$ oppure $|x|_p<1$. Infatti se $x^\ell\to y$ allora deve essere $xy=y$ da cui o $x=1$ oppure $y=0$ che implica $|x|_p<1$.}\\
\end{enumerate}
We just proved that if $\mathcal{P}$ is convergent then $|A_k+B_{k-1}|_p>1$. \\

   Now we prove that, if $\mathcal{P}$ is convergent, then $ii$) holds. Since every $\mathcal{P}_j$ is convergent, it suffices to prove that, if $B_k=0$, then $|B_{k-1}|_p<1$.  If this is not the case, then, by point $i$) and the above remarks on the eigenvalues of $M_k$ (which are in this case $A_k$ and $B_{k-1}$),  we would have 
$|B_{k-1}|_p> 1$ and $|A_{k}|_p< 1 $, so that, for every $\ell>0$,
$$\frac{A_{\ell k}}{B_{\ell k}}=\frac{A_{ k}^\ell}{0}=\infty,$$
and, by~\eqref{eqn:eigenvalues},
$$\frac{A_{\ell k-1}}{B_{\ell k-1}}=\frac{A_{k-1} (A_{ k}^\ell - B_{k-1}^\ell)}{B_{k-1}^\ell(A_k-B_{k-1})}\buildrel p \over \longrightarrow - \frac{A_{k-1}}{A_k-B_{k-1}} \hbox{ for } \ell\to \infty; $$
therefore $\left|\frac{A_{\ell k-1}}{B_{\ell k-1}}\right |_p\leq \left |\frac{A_{k-1}}{B_{k-1}}\right|_p<\infty$,  contradicting the convergence of $\mathcal{P}$.

Conversely, assume that 
conditions $i)$ and $ii)$ hold. Since the matrices $T_j$  have the same characteristic polynomials, they have the same eigenvalues, say $\mu$ and $\nu$, satisfying $|\mu + \nu|_p>1$  and $\mu\nu=(-1)^k$. This implies that $\mu \neq \nu$ and 
$|\mu|_p>1$, $|\nu|_p<1$. For every $j$, let $\beta_j\in\PP^1(\overline{\QQ}_p)$ be the fixed point of $T_j$ associated to the dominant eigenvalue $\mu$. Then  it is easy to see that $M_{j-1}T_j=T_1M_{j-1}$ (defining $M_0$ to be the identity matrix), so that 
$\overline M_{j-1}\beta_j=\beta_1$. Now let $T$ be any of the matrices $T_j$, and $\beta=\beta_j$. Let $\beta^*$ be the other fixed point.
We claim that the ratios of the entries of the first column of $T^\ell$ converge $p$-adically to $\beta$, that is $\overline{T}^\ell(\infty)=\beta$. We consider  two cases separately.
\begin{enumerate} [label=(\alph*)]
\item[a)] Neither of the two eigenspaces of $T$ is the line  $y=0$. Therefore we can choose two linearly independent eigenvectors  as in \eqref{eq:eigenvectors}. As in \eqref{eq:Mkell} we have the equality
\begin{align*}
        T^\ell& =\frac 1 {\beta-\beta^*}\begin{pmatrix}\beta \mu^\ell-\beta^*\nu^\ell & -\beta\beta^*(\mu^\ell-\nu^\ell)\\
        \mu^\ell-\nu^\ell & \nu^\ell\beta-\mu^\ell\beta^* 
        \end{pmatrix}
    \end{align*}
    for each $\ell>0$. In this case, recalling that $\nu^\ell\buildrel p\over\rightarrow 0$, the ratio of the entries of the first column converges to $\beta$, i.e.\ 
    $$\frac{\beta \mu^\ell-\beta^*\nu^\ell}{\mu^\ell-\nu^\ell}\buildrel p\over\rightarrow \beta.$$
  \item[b)] One of the two eigenspaces is the line $y=0$. By the hypothesis $ii)$, we must have 
  \begin{equation*} T= \begin{pmatrix}\mu & *  \\0
         & \nu
        \end{pmatrix}.
        \end{equation*}
        Therefore   $\overline{T}^\ell(\infty)=\infty=\beta$.
    \end{enumerate}

    For any $j \in \{1,\dots,k\}$ and $\ell\in\NN$, $M_{\ell k+j-1} =M_{j-1}T_j^\ell$; therefore

    \begin{equation*}
    \overline{M}_{\ell k+j-1}(\infty)=\overline{M}_{j-1}\overline{T}_j^\ell(\infty)\buildrel p \over \rightarrow \overline M_{j-1}(\beta_j)=\beta_1.\qedhere
    \end{equation*}
      \end{proof}

\begin{remark}\label{rem.condizione.ii}
    Condition (\textit{ii}) in Theorem~\ref{criterio.di.convergenza.padica} is necessary to obtain convergence of the continued fraction. In the following example, convergents tend to different limits along different arithmetic progressions.
    
Consider the $\PPCF$ $[\overline{1,-\frac 1 p,p}]$; in this case $k=3$ and
\begin{align*}T_1=M\left(1,-\frac 1 p,p\right)&=\begin{pmatrix}p & 1-\frac{1}{p}  \\0
         & -\frac{1}{p}
        \end{pmatrix},\\T_2= M\left(-\frac 1 p,p,1\right)&=\begin{pmatrix}-\frac{1}{p} & 0  \\p+1
         & p
        \end{pmatrix}, \\ T_3= M\left(p,1,-\frac 1 p\right)&=\begin{pmatrix}p-1-\frac{1}{p} & p+1  \\1-\frac{1}{p}
         & 1
        \end{pmatrix}.
\end{align*}
 Then $|A_k+B_{k-1}|_p=|\mathrm{Tr}(T_1)|_p=|p-\frac{1}{p}|_p=p>1$, so that (\textit{i}) is satisfied, but $T_1$ does not satisfy condition (\textit{ii}).

It is easy to check by induction that
\begin{align}
    A_{3n}&=p^n, & A_{3n+1}&=\frac{p+1}{p^2+1}p^{n+1}-\frac{p-1}{p^2+1}\left(\frac{-1}{p}\right)^{n}, & A_{3n+2}&=\frac{p-1}{p^2+1}\left(p^{n+1}-\left(\frac{-1}{p}\right)^{n+1}\right)\\
    B_{3n}&=0, & B_{3n+1}&=\left(\frac{-1}{p}\right)^n, & B_{3n+2}&=\left(\frac{-1}{p}\right)^{n+1}
\end{align}
%$$T_1^n=\begin{pmatrix}p^n & \frac{p-1}{p^2+1}\left(p^n-\left(\frac{-1}{p}\right)^n\right)  \\0         & \left(\frac{-1}{p}\right)^n\end{pmatrix}$$
and therefore the sequence $[a_1,\dotsc,a_{m}]$ does not have a limit; in fact the quantity $[a_1,\dotsc,a_{m}]$ is $\infty$ for $m$ multiple of 3 and %has an absolute value that grows to infinity
tends to $(1-p)/(p^2+1)$ for $m$ not multiple of 3. %, with oscillating signs.
\end{remark}

\begin{remark}\label{rmk:convnotpreserved}
    The isomorphism $\sigma_k$ defined in Proposition~\ref{prop:scambio} does not preserve convergence. Considering again the example from Remark~\ref{rem.condizione.ii}, the $\PPCF$ $[\overline{1,-\frac 1 p,p}]$ corresponds to a non-converging point in  $V(F)_{0,3}$ for $F=(p^2+1)x+(p-1)$, while $\sigma_3(1,-\frac{1}{p},p)= (p,-\frac{1}{p},1)\in V(G)_{0,3}^{\mathrm{con}}$ for $G=(p-1)x^2 - (p^2+1)x$, as can be easily seen applying Theorem~\ref{criterio.di.convergenza.padica}. The value of the limit can be shown to be zero.
\end{remark}

\section{Families of PCF of type \texorpdfstring{$(N, k)$}{(N,k)} when \texorpdfstring{$N + k \leq  3$}{N+k<=3}}
\label{sec:familiesPCF}

 Let $[A:B:C]$ be a point in $\mathbb{P}^2(\QQ)$ and consider the (non-trivial) quadratic polynomial equation $F(x)=0$, where
$$F(x)=Ax^2+Bx+C\in  \QQ[x].$$
Let $\mathcal{B}=\{\beta, \beta^*\}\subseteq \mathbb{P}^1(\overline{\mathbb{Q}})$ be the multi-set of roots of  $F(x)$. If $A\not=0$, then $\beta,\beta^\ast\in\overline{\QQ}$, and
\[\beta+\beta^\ast=-\frac B A,\quad\quad \beta\beta^*= \frac C A.\]\\
\iffalse Let $\mathcal{B}=\{\beta, \beta^*\}\subseteq \mathbb{P}^1(\overline{\mathbb{Q}_p})$ be the multi-set of roots of  $F(X)$. Suppose that $F$ is monic, i.e.\
\[F(x)=x^2-Ax+B \qquad \text{where $A=\beta+\beta^*$ and $B=\beta\beta^*$.}\]
\textcolor{red}{Writing
\[F(x)=Ax^2+Bx+C, \]
when  $\mathcal{B}\subseteq \overline{\mathbb{Q}_p}$ (i.e.\ $A \neq 0$) we have \[B/A=-(\beta+\beta^*)\qquad  \text{and}\qquad  C/A=\beta\beta^*.\]} \fi
 In this section, we consider the same types as in~\cite[§5-8]{BrockElkies2021}, i.e.\ the types such that $V(F)_{N,k}$ is either zero or one dimensional, and study them in the $p$-adic setting. In particular, we will characterize the cases in which $V(F)^{\mathrm{con}}_{N,k}$ is non-empty and  $\beta, \beta^*$ are rationals or square roots of rationals.
 \begin{remark} Let $\Delta=B^2-4AC$ be the discriminant of $F$. Notice that if  $\Delta\not\in \QQ_p^2$ then $ V(F)^{\mathrm{con}}_{N,k}=\varnothing$ for every $N,k$. Indeed the limit of a PCF having partial quotients in $\calo$ must belong to $\PP^1(\QQ_p)$. 
 \end{remark}
 
\subsection{Type \texorpdfstring{$(0,1)$}{(0,1)}}
\label{sect:01}
The following result follows immediately from Table~\ref{tab:polynomials}, equation~\eqref{eq:VF}, and  Theorem~\ref{criterio.di.convergenza.padica}.
\begin{proposition}\ 
\label{prop:PCF01}
\begin{enumerate}[label=(\alph*)]
    \item \label{itm:V_01} $V(F)_{0,1}(\calo)$ is non-empty if and only if $A=-C\neq 0$ and $B/A \in \calo$. If this is the case, then $V(F)_{0,1}(\calo)=\{-B/A\}$.
    \item\label{itm:Vconv_01} $V(F)^{\mathrm{con}}_{0,1}$ is non-empty if and only if  $V(F)_{0,1}(\calo)$ is non-empty and $ |B|_p>|A|_p$. If this is the case, then $V(F)_{0,1}(\calo)= V(F)^{\mathrm{con}}_{0,1}$ and $[\overline{-B/A}]$ converges to the unique root of $F$ with $p$-adic absolute value $>1$.
\end{enumerate}
    
\end{proposition}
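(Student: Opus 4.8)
The plan is to handle both parts by direct computation, exploiting that for $k=1$ the matrix $E$ of~\eqref{eqn:E} reduces to the single matrix $M_1$, so all the machinery collapses.

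For part~\ref{itm:V_01}, I would start from Table~\ref{tab:polynomials}: a type $(0,1)$ PCF $\mathcal{P}=[\overline{a_1}]$ has $\mathrm{Quad}(\mathcal{P})=x^2-a_1x-1$, equivalently $E=M_1=\left(\begin{smallmatrix} a_1 & 1 \\ 1 & 0\end{smallmatrix}\right)$, so that $E_{11}=a_1$, $E_{12}=E_{21}=1$, $E_{22}=0$. Substituting into the defining system~\eqref{eq:VF} gives the three equations $-Aa_1=B$, $-A=C$, and $-B=-Ca_1$. First I would read off $A=-C$ from the second equation; if $A=0$ then the first and second force $B=C=0$ as well, i.e.\ $F\equiv0$, which is excluded, so $A\neq0$ and the first equation yields $a_1=-B/A$, the third being then automatically satisfied. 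Hence $V(F)_{0,1}(\calo)$ is non-empty exactly when $A=-C\neq0$ and the forced value $a_1=-B/A$ lies in $\calo$, in which case it is the single point $\{-B/A\}$.

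For part~\ref{itm:Vconv_01}, I would apply Theorem~\ref{criterio.di.convergenza.padica} with $k=1$. Here $A_k+B_{k-1}=A_1+B_0=a_1$, so condition~(i) reads $|a_1|_p>1$; and since $M(a_1)_{21}=1\neq0$, condition~(ii) is vacuous. Thus $[\overline{a_1}]$ converges iff $|a_1|_p>1$, which under $a_1=-B/A$ is precisely $|B|_p>|A|_p$. As $V(F)_{0,1}(\calo)$ has at most one element, its convergent locus is non-empty iff $V(F)_{0,1}(\calo)$ is non-empty and $|B|_p>|A|_p$, and then the two sets coincide.

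To identify the limit, note the roots $\beta,\beta^*$ of $F$ satisfy $\beta\beta^*=C/A=-1$, so $|\beta|_p|\beta^*|_p=1$; since $|\beta+\beta^*|_p=|a_1|_p>1$ excludes $|\beta|_p=|\beta^*|_p=1$, exactly one root, say $\beta$, has $|\beta|_p>1$ and the other has $|\beta^*|_p<1$. The convergence argument in the proof of Theorem~\ref{criterio.di.convergenza.padica} shows the continued fraction tends to the fixed point of $\overline{M_1}$ attached to the dominant eigenvalue $\mu$ with $|\mu|_p>1$. The only delicate point is matching this dominant fixed point with the root of large absolute value: because $B_1=1$ and $B_0=0$, the characteristic polynomial of $M_1$ literally coincides with $\mathrm{Quad}(\mathcal{P})$, so eigenvalues and fixed points agree and the dominant one is $\beta$. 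I expect this final identification, rather than any of the arithmetic, to be the only step requiring real care.
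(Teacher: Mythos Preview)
Your proposal is correct and follows exactly the approach indicated by the paper, which simply states that the result is immediate from Table~\ref{tab:polynomials}, equation~\eqref{eq:VF}, and Theorem~\ref{criterio.di.convergenza.padica}; you have merely written out in detail what the paper leaves implicit. Your final identification of the limit via the coincidence of eigenvalues and fixed points for $M_1$ (since $B_1=1$, $B_0=0$ gives $\mu=\beta$) is a nice concrete justification of the one step the paper skips entirely.
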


The following proposition characterizes the convergent locus of a polynomial which  is reducible over $\QQ$.

 \begin{proposition}
\label{prop:rational01}
Suppose that $F$ is reducible over $\QQ$. Then, $V(F)^{\mathrm{con}}_{0,1}$ is non-empty if and only if the multi-set of the roots of $F$ is $\mathcal{B}=\{\pm 1/{p^\alpha}, \mp p^\alpha\}$ for some $\alpha>0$ and $V(F)^{\mathrm{con}}_{0,1}=\{(\pm(1-p^{2\alpha})/p^\alpha)\}$.
\end{proposition}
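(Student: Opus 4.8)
The plan is to reduce the statement to the already-proved criterion in Proposition~\ref{prop:PCF01}\ref{itm:Vconv_01} and then settle a small Diophantine question over $\calo=\ZZ[1/p]$. By that criterion, $V(F)^{\mathrm{con}}_{0,1}$ is non-empty exactly when $A=-C\neq 0$, $a_1:=-B/A\in\calo$, and $|B|_p>|A|_p$ (equivalently $|a_1|_p>1$), in which case $V(F)^{\mathrm{con}}_{0,1}=\{a_1\}$. Since here $A\neq 0$, the roots $\beta,\beta^*$ of $F$ are finite, and Vieta's formulas give $\beta\beta^*=C/A=-1$ and $\beta+\beta^*=a_1$. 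Thus I would recast the three conditions purely in terms of the roots: $\beta,\beta^*$ are rational (this is the reducibility hypothesis), their product is $-1$, and their sum lies in $\calo$ with $p$-adic absolute value $>1$.

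The core step, which I expect to be the main (though elementary) obstacle, is to determine which rational roots of product $-1$ have their sum in $\calo$. Writing one root as $\beta=r=a/b$ in lowest terms, the other is $\beta^*=-1/r$ and $a_1=r-1/r=(a^2-b^2)/(ab)$. From $\gcd(a,b)=1$ I get $\gcd(a,a^2-b^2)=\gcd(a,b^2)=1$ and likewise $\gcd(b,a^2-b^2)=1$, so the fraction $(a^2-b^2)/(ab)$ is already in lowest terms. Its membership in $\ZZ[1/p]$ therefore forces the denominator $ab$ to be $\pm$ a power of $p$; together with $\gcd(a,b)=1$ this means one of $a,b$ is $\pm 1$ and the other $\pm p^\alpha$, i.e.\ $r=\pm p^m$ for some $m\in\ZZ$. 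This is precisely where the structure of the ring $\ZZ[1/p]$ and the product-$=-1$ constraint interact.

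It then remains to impose convergence and read off the value. A one-line valuation computation gives $v_p(a_1)=v_p(p^m-p^{-m})=-|m|$, so $|a_1|_p=p^{|m|}$ and the condition $|a_1|_p>1$ holds iff $m\neq 0$. Setting $\alpha=|m|>0$ and, if necessary, swapping $\beta$ and $\beta^*$, the multiset of roots is exactly $\{\pm 1/p^\alpha,\mp p^\alpha\}$, and $a_1=\beta+\beta^*=\pm(1-p^{2\alpha})/p^\alpha$, which yields $V(F)^{\mathrm{con}}_{0,1}=\{\pm(1-p^{2\alpha})/p^\alpha\}$. For the converse I would simply check that any such pair of roots satisfies $C/A=-1$ (hence $A=-C\neq 0$), $a_1=\pm(1-p^{2\alpha})/p^\alpha\in\calo$, and $|a_1|_p=p^\alpha>1$; Proposition~\ref{prop:PCF01}\ref{itm:Vconv_01} then returns non-emptiness and closes the equivalence.
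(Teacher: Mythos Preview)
Your proof is correct and follows essentially the same route as the paper: invoke Proposition~\ref{prop:PCF01} to reduce to $\beta\beta^*=-1$, $\beta+\beta^*\in\calo$, $|\beta+\beta^*|_p>1$, then show the roots must be $\pm p^{\pm\alpha}$. The only cosmetic difference is in that last step: the paper observes that a rational root of the monic polynomial $x^2-a_1x-1\in\calo[x]$ lies in $\calo$ (integral closure) and is therefore a unit, whereas you reach the same conclusion by the explicit gcd computation showing $(a^2-b^2)/(ab)$ is in lowest terms.
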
 

\begin{proof}
We already noticed that $V(F)_{0,1}(\calo)$ is non-empty if and only if  $A=-C\not=0$ and $B/A\in\calo$.
A quadratic monic polynomial with coefficients in $\calo$ and constant term equal to $-1$ is reducible over $\QQ$ if and only if its roots are units in $\calo$ with product $-1$ (because $\calo$ is integrally closed); therefore the set of roots of $F$ is $$\{\pm 1/{p^\alpha}, \mp p^\alpha\}$$ for some $\alpha\in\NN$.
The sum \[-\frac{B}{A}= \pm\frac{1}{p^\alpha} \mp p^\alpha=\pm \frac{1-p^{2\alpha}}{p^\alpha}\]
has a $p$-adic absolute value greater than 1 if and only if $\alpha>0$.
%QUESTA PARTE SEMBRAVA SUPERFLUA. ERA UN ARGOMENTO ESPLICITO ED ELEMENTARE, SUPERATO DA QUELLO SCRITTO PRIMA
% $V(F)^{\mathrm{con}}_{0,1}$ contains a rational point $(a_1)$ if and only if $F(x)=x^2-a_1x-1$ and its discriminant $a_1^2+4$ is a square, that is
% \begin{equation} \label{eqn:disc01}
%     \frac{{a}^2}{p^{2k}} + 4 = \frac{h^2}{p^{2\ell}}
% \end{equation}
% for $a\in\mathbb{Z}$ and $h,k,\ell \in\mathbb{Z}_{> 0}$ such that $a_1=a/p^k$ and $a, h$ are coprime with $p$. Observing that $\ell=k$, we can rewrite~\eqref{eqn:disc01} as
% \begin{equation*}
%     (h+a)(h-a)=4p^{2k}.
% \end{equation*}
% Since $p$ is odd and coprime with $a$ and $h$, it can divide only one number between $(h+a)$ and $(h-a)$. Moreover, both factors must be even because their sum and their product are. Therefore, up to changing the sign of $a$, we have $h+a=2p^{2k}$ and $h-a=2$ and the thesis follows by substituting the second equation in the first.
\end{proof}

\begin{remark}
\label{rem:bedocchi01}
     We observe that no irrational square root can be the limit of a PCF of type~$(0,1)$. This is consistent with Bedocchi's result on Browkin continued fractions, stating that the Browkin continued fraction expansion of an irrational square root of an integer, if periodic, must have preperiod of length $2$ or $3$~\cite[§3]{Bedocchi1988}.
\end{remark} 
 
 Finally, we observe that, by part~\textit{\ref{itm:V_01}} of Proposition~\ref{prop:PCF01}, $V(F)_{0,1}(\calo)$ is empty  for every polynomial $F(x)=Ax^2+C$, except for the reducible case $A=-C$.

\subsection{Type \texorpdfstring{$(1,1)$}{(1,1)}} 
\label{sect:11}
The convergence of a PCF of type $(1,1)$ depends only on its purely periodic part and can be therefore studied in the light of type $(0,1)$.

Using Table~\ref{tab:polynomials} and \eqref{eq:VF}, the equations of $V(F)_{1,1}$ are
\begin{equation}\label{eq:1-1}
\begin{cases}  
Aa_1 - 2Ab_1 - B &=0, \\
 Ab_1^2 + Bb_1 + A + C &=0.
\end{cases}  
\end{equation}
%We remark that $\beta$ (a root of $F$) is the $p$-adic limit of a continued fraction $[b_1,\overline{a_1}]$
%if and only if $1/(\beta-b_1)$ is the $p$-adic limit of $[\overline{a_1}]$.
%Therefore, if $V(\mathcal{B})^{\mathrm{con}}_{1,1}$ contains the point $(b_1,a_1)$, then $V(\mathcal{B}')^{\mathrm{con}}_{0,1}$ contains the point $(a_1)$, where $\mathcal{B}'=\{1/(\beta-b_1),1/(\beta^*-b_1)\}$. 

\begin{proposition}\
\label{prop:PCF11}
\begin{enumerate}[label=(\alph*)]
    \item \label{itm:VF11}  $V(F)_{1,1}(\calo)$ is not empty if and only if $A \neq 0$, $B/A, C/A \in \calo$ and $B^2-4A(A+C)$ is a square in $\QQ$. If this is the case, then $(B^2-4A(A+C))/A^2$ is a square in $\calo$ and $ V(F)_{1,1}(\calo)=\{(b_1,2b_1+B/A), (-B/A-b_1,-2b_1-B/A)\}$ where $b_1$ is a root of $F(x)+A$, and $\beta\neq\beta^*$.
    \item \label{itm:VFcon11} $V(F)_{1,1}^{\mathrm{con}}$ is not empty if and only if $V(F)_{1,1}(\calo)$ is not empty and  $|B^2-4A(A+C)|_p>|A^2|_p$.  If this is the case, then $V(F)_{1,1}^{\mathrm{con}}=V(F)_{1,1}(\calo) $ and one of the two associated continued fractions converges to $\beta$, while the other converges to $\beta^*$.
\end{enumerate}

\end{proposition}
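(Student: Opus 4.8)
The plan is to solve the defining system \eqref{eq:1-1} explicitly and then feed the periodic coordinate $a_1$ into Theorem~\ref{criterio.di.convergenza.padica}. For part~\ref{itm:VF11} I would first dispose of the case $A=0$: the two equations of \eqref{eq:1-1} then force $B=0$ and $C=0$, so $F$ would be the zero polynomial, which is excluded; hence $A\neq0$ is necessary. Assuming $A\neq0$, the first equation yields $a_1=2b_1+B/A$, and the second says precisely that $b_1$ is a root of $F(x)+A=Ax^2+Bx+(A+C)$, whose discriminant is $B^2-4A(A+C)$. A rational $b_1$ thus exists if and only if this discriminant is a square in $\QQ$.

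The delicate point is integrality: I must ensure the candidate solutions lie in $\calo=\ZZ[1/p]$ and not merely in $\QQ$, even though the quadratic formula divides by $2A$ and $1/2\notin\calo$. I would settle this using that $\calo$, being a localization of $\ZZ$, is integrally closed in $\QQ$: the roots $b_1,b_1'$ of $F(x)+A$ satisfy $b_1+b_1'=-B/A$ and $b_1b_1'=1+C/A$, so as soon as $B/A,C/A\in\calo$ each root is integral over $\calo$ and hence lies in $\calo$, whence $a_1=2b_1+B/A\in\calo$ too. Conversely, a solution in $\calo^2$ gives $B/A=a_1-2b_1\in\calo$ and, via the second equation, $C/A\in\calo$. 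Setting $s:=b_1-b_1'\in\calo$ yields $s^2=(B/A)^2-4(1+C/A)=(B^2-4A(A+C))/A^2$, establishing the ``square in $\calo$'' claim. For $\beta\neq\beta^*$ I would note that the discriminant of $F$ equals $\bigl(B^2-4A(A+C)\bigr)+4A^2$; a double root of $F$ would make $B^2-4A(A+C)=-4A^2<0$, which is never a square in $\QQ$ when $A\neq0$. The two listed points correspond to $b_1$ and $b_1'=-B/A-b_1$, with periodic coordinates $a_1$ and $a_1'=2b_1'+B/A=-a_1$.

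For part~\ref{itm:VFcon11}, convergence of $[b_1,\overline{a_1}]$ depends only on its purely periodic tail $[\overline{a_1}]$, a type-$(0,1)$ expansion with $\mathrm{Quad}=x^2-a_1x-1$ (Table~\ref{tab:polynomials}). Applying Theorem~\ref{criterio.di.convergenza.padica} with $k=1$---condition (ii) being vacuous since $M(a_1)_{21}=1\neq0$---this converges if and only if $|a_1|_p>1$. The clean computation is that, with $s=b_1-b_1'$, one has $a_1=2b_1+B/A=b_1-b_1'=s$, so $a_1^2=(B^2-4A(A+C))/A^2$ and the convergence condition $|a_1|_p>1$ becomes exactly $|B^2-4A(A+C)|_p>|A^2|_p$. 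Since the two solutions have periodic coordinates $a_1$ and $-a_1$ of equal $p$-adic absolute value, either both converge or neither does, giving $V(F)_{1,1}^{\mathrm{con}}=V(F)_{1,1}(\calo)$ precisely under the stated inequality.

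It remains to identify the limits. Both lie in $\{\beta,\beta^*\}$ by Proposition~2.9 of \cite{BrockElkies2021}. The limit of $[b_1,\overline{a_1}]$ is $\overline{M(b_1)}(\gamma)=b_1+1/\gamma$, where $\gamma$ is the fixed point of $\overline{M(a_1)}$ with $|\gamma|_p>1$; passing from $a_1$ to $-a_1$ sends $\gamma$ to $-\gamma$, so the second solution converges to $b_1'-1/\gamma$. The two limits sum to $b_1+b_1'=-B/A=\beta+\beta^*$, and if they coincided at a common $v\in\{\beta,\beta^*\}$ then $2v=\beta+\beta^*$ would force $\beta=\beta^*$, contradicting part~\ref{itm:VF11}; hence they equal $\beta$ and $\beta^*$ in some order. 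I expect the integrality bookkeeping of the second paragraph and this final fixed-point identification to be the only genuinely non-routine steps.
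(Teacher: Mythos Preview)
Your proposal is correct and follows essentially the same route as the paper. The paper's proof of part~\ref{itm:VF11} is terse (``easily deduced by studying the solutions of~\eqref{eq:1-1}''), whereas you spell out the integrality step via integral closure of $\calo$ in $\QQ$; this is a genuine addition, not a divergence. For part~\ref{itm:VFcon11} the paper invokes Proposition~\ref{prop:PCF01} rather than Theorem~\ref{criterio.di.convergenza.padica} directly, and identifies the second limit by an explicit computation $-B/A-b_1-1/\alpha=-B/A-\beta=\beta^*$, while you instead show the two limits sum to $\beta+\beta^*$ and are distinct; both arguments rest on the same observation that $a_1\mapsto -a_1$ sends the dominant fixed point $\gamma$ to $-\gamma$.
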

\begin{proof}
    Part~\textit{\ref{itm:VF11}} is easily deduced by studying the solutions of~\eqref{eq:1-1}. In particular, if  $V(F)_{1,1}(\calo)$ is not empty, then $A\neq 0$ (otherwise also $B=C=0$, a contradiction) and $\beta\neq \beta^*$ since, otherwise, $\Delta=0$ and $B^2-4A(A+C)$ being a square would imply that $-4A^2$ too is a square in $\QQ$.
    
    We prove now~\textit{\ref{itm:VFcon11}}. Let $(b_1,a_1)$ be a point in $V(F)_{1,1}(\calo)$ and suppose that $|B^2-4A(A+C)|_p>|A^2|_p$. Since $\left(B^2-4A(A+C)\right)/A^2=a_1^2$, we have $|a_1|_p>1$, which in turn implies that $[\overline{a_1}]$ converges by part~\textit{\ref{itm:Vconv_01}} of Proposition~\ref{prop:PCF01}. Hence $(b_1,a_1)\in V_{1,1}^{\mathrm{con}}(F)$.
    % Moreover, since $A^2-4B-4$ is the discriminant of $F(x)+1$, this polynomial has roots $b_1$ and $b_1-a_1$, where
    % \begin{align*}
    % b_1=\frac{A+a_1}{2}.
    % \end{align*}
    %  We have thus proven that $(a_1,b_1)$ satisfies~\eqref{eq:1-1} and $[\overline{a_1}]$ converges, so that $(b_1,a_1) \in V_{1,1}^{\mathrm{con}}(F)$.\\
    Conversely, suppose that $(b_1,a_1) \in V_{1,1}^{\mathrm{con}}(F)$. Then $b_1=\frac{Aa_1-B}{2A}$ and both $b_1$ and $b_1-a_1$ are roots of $F(x)+A$, whose discriminant is $B^2-4A(A+C)=(a_1A)^2$. %we immediately deduce that $B^2-4A(A+C)$ must be a square in $\calo[x]$.
    Finally, the convergence of the $\PPCF$ $[\overline{a_1}]$ implies $|B^2-4A(A+C)|_p>|A^2|_p$. In this case, $V_{1,1}^{\mathrm{con}}(F)$ also contains the point  $(-B/A-b_1, -a_1)$. To prove this, observe that the purely periodic continued fraction $[\overline{a_1}]$ converges to some $\alpha$ which is---as we saw for type $(0,1)$---the $p$-adically dominant root of the polynomial $x^2-a_1x-1$. It follows that  $-\alpha$ is the limit of the continued fraction $[\overline{-a_1}]$. Therefore, if $\beta$ is the limit of the continued fraction $[b_1,\overline{a_1}]$, then $[-B/A-b_1,\overline{-a_1}]$ converges to $-B/A-b_1-\frac 1 \alpha=-B/A-\beta=\beta^*$, so that $(-B/A-b_1,-a_1) \in V_{1,1}^{\mathrm{con}}(F)$.
\end{proof}

When $A\not=0, B=0$, the above proposition provides information about the square roots of rationals which can be expressed as $p$-adically convergent continued fractions of type~$(1,1)$. 
\begin{corollary}
\label{cor:rational11}
Suppose $A\not=0, B=0$; then, $V(F)^{\mathrm{con}}_{1,1}$ is non-empty if and only if there exists $b_1 \in \calo$ with $|b_1|_p>1$ such that $-C/A=b_1^2+1$.
In this case $V(F)^{\mathrm{con}}_{1,1}=\{\pm(b_1,2b_1)\}$.

%Suppose $\mathcal{B}=\{\pm\beta\}$ with $\beta^2\in\QQ$. Then $V(\mathcal{B})^{\mathrm{con}}_{1,1}$ is non-empty if and only if there exists $b\in \calo$,  such that $|b|_p>1$, and $\beta$ is a square root of $b^2 +1$.
%In this case $V(\mathcal{B})^{\mathrm{con}}_{1,1}=\{\pm(b,2b)\}$.
\end{corollary}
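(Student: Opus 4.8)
The plan is to specialize Proposition~\ref{prop:PCF11} to the case $A\neq 0$, $B=0$ and unwind what the three conditions become. First I would note that with $B=0$, part~\ref{itm:VF11} of Proposition~\ref{prop:PCF11} requires $C/A\in\calo$ and that $B^2-4A(A+C)=-4A(A+C)=-4A^2(1+C/A)$ be a square in $\QQ$. Since $-4A^2$ is $A^2$ times $-4$, and a square times $-4(1+C/A)$, the condition ``$B^2-4A(A+C)$ is a square in $\QQ$'' is equivalent to ``$-(1+C/A)$ is a square in $\QQ$''; setting $t^2=-(1+C/A)$ with $t\in\QQ$, one has $t\in\calo$ by integral closedness (as in the proof of Proposition~\ref{prop:rational01}), and $b_1$, the root of $F(x)+A=Ax^2+(A+C)$, satisfies $b_1^2=-(A+C)/A=-(1+C/A)=t^2$. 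Thus $b_1\in\calo$ and $-C/A=b_1^2+1$, which already isolates the shape of the claimed condition.

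Next I would feed this into the convergence criterion of part~\ref{itm:VFcon11}: $V(F)^{\mathrm{con}}_{1,1}$ is non-empty iff $V(F)_{1,1}(\calo)$ is non-empty and $|B^2-4A(A+C)|_p>|A^2|_p$. With $B=0$ this inequality reads $|{-4A^2(1+C/A)}|_p=|A^2|_p\,|1+C/A|_p>|A^2|_p$ (using $|{-4}|_p=1$ since $p$ is odd), i.e.\ $|1+C/A|_p>1$, equivalently $|b_1^2|_p>1$, equivalently $|b_1|_p>1$. Combining with the previous paragraph, non-emptiness of the convergent locus is exactly the existence of $b_1\in\calo$ with $|b_1|_p>1$ and $-C/A=b_1^2+1$, which is the asserted criterion.

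It remains to identify the points. From part~\ref{itm:VF11} with $B=0$ the set is $V(F)_{1,1}(\calo)=\{(b_1,2b_1),(-b_1,-2b_1)\}$ (since $B/A=0$, the second point $(-B/A-b_1,-2b_1-B/A)$ becomes $(-b_1,-2b_1)$), and when the convergent locus is non-empty part~\ref{itm:VFcon11} gives $V(F)^{\mathrm{con}}_{1,1}=V(F)_{1,1}(\calo)$; hence $V(F)^{\mathrm{con}}_{1,1}=\{\pm(b_1,2b_1)\}$, as claimed.

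I do not expect a genuine obstacle here, since the statement is essentially a transcription of Proposition~\ref{prop:PCF11} under $B=0$. The only point requiring minor care is the use of $p$ odd to guarantee $|{-4}|_p=1$, so that the factor $-4$ does not disturb the $p$-adic size comparison; and the appeal to integral closedness of $\calo=\ZZ[1/p]$ to pass from ``square in $\QQ$'' to ``square in $\calo$'', exactly as in Proposition~\ref{prop:rational01}.
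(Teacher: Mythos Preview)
Your proposal is correct and is exactly the intended argument: the paper states this corollary immediately after Proposition~\ref{prop:PCF11} without a separate proof, since it is a direct specialization with $B=0$. Your handling of the minor points (use of $p$ odd so that $|{-4}|_p=1$, and integral closedness of $\calo$ to get $b_1\in\calo$) is appropriate, and the identification of the two points via $B/A=0$ is straightforward.
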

This point is the $p$-adic analogue of the classical expansion $\sqrt{n^2+1}=[n,\overline{2n}]$ with $n\in\ZZ_{>0}$.
%\begin{proposition}

%Suppose $\beta, \beta^* \notin \mathbb{Q}$ while $\beta^2,%{\beta^*}^2\in\mathbb{Q}$, and $V(\mathcal{B})^{\mathrm{con}}_{1,1}$ is non-empty. Then $V(\mathcal{B})^{\mathrm{con}}_{1,1}=\{(a_1,2a_1)\}$.
%\end{proposition}
%\begin{proof}
 %   We have already observed that, if $\beta$ is the $p$-adic limit of $[b_1, \overline{a_1,a_2}]$, then $1/(\beta-b_1)$ must be a root of the same polynomial $P(x)$ defined for the type $(0,1)$. Equivalently,
  %  \[\beta^2+(a_1-2b_1)\beta+b_1^2-a_1b_1-1=0,\]
   % from which the thesis follows straightforwardly.
%\end{proof}

\subsection{Type \texorpdfstring{$(2,1)$}{(2,1)}}
\label{sect:21}
The equations of $V(F)_{2,1}$ are

\begin{equation}
\label{eq:riscrittura2}
%\label{eq:2-1}
\begin{cases} A(-2a_1 b_1 b_2 +2 b_1b_2^2 -a_1-2 b_1+2 b_2)-B(b_2 a_1 -b_2^2+1) &=0,\\ 
A(a_1 b_1^2 b_2 - b_1^2 b_2^2 +a_1 b_1 + b_1^2 - 2 b_2 b_1 -1) - C(b_2 a_1 -b_2^2+1)&=0,\\
B(a_1 b_1^2 b_2 - b_1^2 b_2^2 +a_1 b_1 + b_1^2 - 2 b_2 b_1 -1)- C(-2a_1 b_1 b_2 +2 b_1b_2^2 -a_1-2 b_1+2 b_2)&=0.
\end{cases} 
\end{equation}

If $A\neq 0$ the third equation is redundant, but it is necessary if $A=0$.
By eliminating $a_1$ we see that the third equation may be replaced by

\begin{equation}
\label{eq:riscrittura3}
A(b_1^2b_2^2+ 2b_1b_2 + b_1^2+1) + B(b_1b_2^2+b_1+b_2) +  C(b_2^2 + 1) =0.
\end{equation}

\begin{remark}
    If $A=B=0$ the third equation implies that $b_2^2+1=0$ which has no solutions in $\calo$; in this case $V(F)_{2,1}(\calo)$ is empty.
\end{remark}

\begin{proposition}
\label{prop:21a}
 Suppose that $A=0$ and $B,C$ are coprime integers with $B>0$. Then $V(F)_{2,1}(\calo)$ is non-empty if and only if there exist $\epsilon_1\in \{1,-1\}$, an integer $k$, and $\alpha,\beta \in \NN$, such that
    \[B= p^\beta(p^{2\alpha}+1)\qquad \text{and} \qquad
C=k(p^{2\alpha}+1)-\epsilon_1 p^{\alpha+\beta}.\ \]
If this is the case and $\alpha>0$, then $\alpha,\beta,k,\epsilon_1$ are uniquely determined, $V(F)_{2,1}(\calo)=V(F)_{2,1}^{\mathrm{con}}$ and it consists of two distinct points $(b_1,b_2,a_1)$ given by
\begin{align*}
b_1&=- p^{-\beta}k & b_1&=- p^{-\beta}k\\
b_2&=\epsilon_1 p^{ \alpha} & b_2&=\epsilon_1 p^{ -\alpha}\\
a_1&=\epsilon_1 (p^\alpha-p^{-\alpha}) & a_1&=-\epsilon_1 (p^\alpha-p^{-\alpha});
\end{align*}
the two associated continued fractions converge to the point at infinity and to $-C/B$ respectively.

If $\alpha =0$ then $V(F)_{2,1}(\calo)$ consists of the two points
\begin{align*}
b_1&=-\frac{C+p^\beta}{2 p^\beta} & b_1&=-\frac{C-p^\beta}{2 p^\beta}\\
b_2&=1 & b_2&=-1\\
a_1&=0 & a_1&=0
\end{align*}
and $V(F)_{2,1}^{\mathrm{con}}=\varnothing$.

\end{proposition}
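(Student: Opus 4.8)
The plan is to solve the system \eqref{eq:riscrittura2}–\eqref{eq:riscrittura3} under the standing hypothesis $A=0$, converting each equation into an arithmetic condition on $b_1,b_2,a_1$, and then to read off convergence from the purely periodic tail using Proposition~\ref{prop:PCF01} and Theorem~\ref{criterio.di.convergenza.padica}. First I would exploit that $B>0$, hence $B\neq 0$: the first equation of \eqref{eq:riscrittura2} then forces $b_2 a_1 - b_2^2 + 1 = 0$, so $b_2\neq 0$ and $a_1 = b_2 - b_2^{-1}$, while the second equation becomes automatically satisfied. Since $a_1,b_2\in\calo$, the identity $b_2^{-1}=b_2-a_1$ shows $b_2^{-1}\in\calo$, so $b_2\in\calo^\times=\{\pm p^{\gamma}:\gamma\in\ZZ\}$. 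Writing $b_2=\epsilon_1 p^{\gamma}$ with $\epsilon_1\in\{\pm1\}$ and $\gamma\in\ZZ$, and putting $\alpha=|\gamma|$, gives $a_1=\pm\epsilon_1(p^{\alpha}-p^{-\alpha})$, with the sign governed by that of $\gamma$.

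Next I would feed this into the third equation in the form \eqref{eq:riscrittura3} (with $A=0$), which rearranges to $(Bb_1+C)(b_2^2+1)=-Bb_2$, and hence—substituting $b_2=\epsilon_1 p^{\pm\alpha}$, both signs yielding the same right-hand side—to $Bb_1+C=-\epsilon_1 B p^{\alpha}/(p^{2\alpha}+1)$. The left-hand side lies in $\calo$ and $p^{2\alpha}+1$ is coprime to $p$, so $(p^{2\alpha}+1)\mid B$. This is the point where the crux enters: writing $B=p^{\beta}(p^{2\alpha}+1)B_1$ with $\beta=v_p(B)$ and $p\nmid B_1$, I would show $B_1=1$. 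Indeed, if some prime $q\neq p$ divided $B_1$, then $q\mid B$ and thus $q\mid Bb_1$ (as $v_q(b_1)\ge 0$); but the integer relation $Bb_1=-\epsilon_1 p^{\alpha+\beta}B_1-C$ reduces modulo $q$ to $0\equiv -C$, contradicting $\gcd(B,C)=1$. Hence $B=p^{\beta}(p^{2\alpha}+1)$, and the integrality of $b_1=\bigl(-\epsilon_1 p^{\alpha+\beta}-C\bigr)/\bigl(p^{\beta}(p^{2\alpha}+1)\bigr)$ becomes the single congruence $C\equiv-\epsilon_1 p^{\alpha+\beta}\pmod{p^{2\alpha}+1}$, i.e.\ $C=k(p^{2\alpha}+1)-\epsilon_1 p^{\alpha+\beta}$ for some $k\in\ZZ$, whereupon $b_1=-p^{-\beta}k$. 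Conversely, these formulas visibly satisfy \eqref{eq:riscrittura2}–\eqref{eq:riscrittura3}, giving the ``if and only if''. For $\alpha>0$, uniqueness follows because $\beta=v_p(B)$ and $p^{2\alpha}+1=B/p^{\beta}$ determine $\alpha$, while $C\bmod(p^{2\alpha}+1)$ determines $\epsilon_1$ (since $p^{2\alpha}+1>2$) and then $k$; the two admissible signs $\gamma=\pm\alpha$ produce the two distinct points in the statement.

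Finally I would settle convergence, which depends only on the tail $[\overline{a_1}]$. For $\alpha>0$ one has $v_p(a_1)=-\alpha$, so $|a_1|_p=p^{\alpha}>1$ and Proposition~\ref{prop:PCF01}\ref{itm:Vconv_01} yields convergence of $[\overline{a_1}]$; hence $V(F)_{2,1}(\calo)=V(F)^{\mathrm{con}}_{2,1}$. To identify the limits I would compute $\overline{M(b_1,b_2)}(\beta)$, where $\beta$ is the $p$-adically dominant fixed point of $\overline{M(a_1)}$, exactly as in the proof of Theorem~\ref{criterio.di.convergenza.padica}. A short check shows the two fixed points of $\overline{M(a_1)}$ are $b_2$ and $-b_2^{-1}$; for $\gamma=\alpha$ the dominant one is $-b_2^{-1}$ and for $\gamma=-\alpha$ it is $b_2$. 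Substituting into the Möbius action and using $(Bb_1+C)(b_2^2+1)=-Bb_2$ gives limit $\infty$ in the first case and $b_1+b_2/(b_2^2+1)=-C/B$ in the second, as claimed. For $\alpha=0$ instead $a_1=0$, so condition (i) of Theorem~\ref{criterio.di.convergenza.padica} fails ($|A_1+B_0|_p=|a_1|_p=0\not>1$) and $[\overline{0}]$ diverges, whence $V(F)^{\mathrm{con}}_{2,1}=\varnothing$; here the two points arise from $\epsilon_1=\pm1$ and specializing $b_1=-p^{-\beta}k$ with $C=2k-\epsilon_1 p^{\beta}$ gives the stated expressions. The main obstacle is the arithmetic step of the second paragraph: distilling the exact shape of $B$ and $C$ out of a single $\calo$-integrality condition together with $\gcd(B,C)=1$.
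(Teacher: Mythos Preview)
Your proof is correct and follows essentially the same route as the paper's: solve the first equation for $a_1=b_2-b_2^{-1}$, deduce $b_2\in\calo^\times$, then use \eqref{eq:riscrittura3} to force the arithmetic shape of $B$ and $C$ via the coprimality hypothesis, and finally read off convergence from the tail $[\overline{a_1}]$ via Proposition~\ref{prop:PCF01}. The only substantive addition is that you carry out explicitly two things the paper leaves implicit: the uniqueness of $(\alpha,\beta,k,\epsilon_1)$ when $\alpha>0$, and the identification of the two limits (you compute $\overline{M(b_1,b_2)}$ applied to the dominant fixed point of $\overline{M(a_1)}$, whereas the paper simply cites Proposition~\ref{prop:PCF01}).
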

\begin{proof}
The first equation gives
\[
a_1=\frac{b_2^2-1}{b_2}=b_2-\frac{1}{b_2}.
\]
This implies that $b_2$ is invertible in $\calo$ and therefore $b_2=\epsilon_1 p^{\epsilon_2 \alpha}$ for some $\epsilon_1,\epsilon_2\in\{1,-1\}$ and $\alpha\in\NN$, so that
\[
a_1=\epsilon_1 (p^{\epsilon_2\alpha}-p^{-\epsilon_2\alpha})=\epsilon_1\epsilon_2 (p^{\alpha}-p^{-\alpha}).
\]
The third equation now gives
\[
b_1=-\frac{C(b_2^2+1)+Bb_2}{B(b_2^2+1)}=-\frac{C(p^{2\alpha}+1)+B\epsilon_1 p^\alpha}{B(p^{2\alpha}+1)},
\]
where the second equality is obtained multiplying numerator and denominator by $p^{2\alpha}$ if $\epsilon_2=-1$. Then $p^{2\alpha}+1$ divides $B$, so write $B=r(p^{2\alpha}+1)$ and we get
\[
b_1=-\frac{C+\epsilon_1 r p^\alpha}{r(p^{2\alpha}+1)};
\]
this shows that $r$ is not divisible by any prime different from $p$, because $B$ and $C$ are coprime, and that $p^{2\alpha}+1$ divides $C+\epsilon_1 r p^\alpha$. Write $r=p^\beta$ with $\beta\in\NN$, and $C+\epsilon_1 r p^\alpha=k(p^{2\alpha}+1)$ for some $k\in\ZZ$.
Therefore we get the family of solutions
\begin{align*}
B&=p^\beta(p^{2\alpha}+1)\\
C&=k(p^{2\alpha}+1)-\epsilon_1 p^{\alpha+\beta}\\
a_1&=\epsilon_1\epsilon_2 (p^{\alpha}-p^{-\alpha})\\
 b_1&=- p^{-\beta}k\\
b_2&=\epsilon_1 p^{\epsilon_2 \alpha}
\end{align*}
where $\alpha,\beta\in\NN,k\in\ZZ, \epsilon_1,\epsilon_2 \in\{\pm 1\}$.

By Proposition~\ref{prop:PCF01} the associated continued fraction converges if and only if $|a_1|_p>1$, which happens if and only if $\alpha>0$; the value of the limit follows from Proposition~\ref{prop:PCF01} as well.
\end{proof}

The next proposition deals with the case $A\not=0$. 

\iffalse \textcolor{red}{Abbiamo calcolato il genere della curva della 3 eq della (12) ed ha sempre genere 1 a meno che $B=C=0$ nel qual caso è una curva riducibile prodotto di due rette ma senza punti $S$-interi}

{\color{red} Cercare un riferimento in generale sul genere di curve di questa forma (grado due in ogni variabile)} 
\fi

\begin{proposition}
\label{prop:21b}
Suppose that $A \neq 0$. Then,
\begin{enumerate}[label=(\alph*)]
    \item $V(F)_{2,1}(\calo)$ consists of finitely many points. In particular, it is empty if $\Delta\leq 0$ or $\Delta=4A^2$.
    \item $V(F)_{2,1}^{\mathrm{con}}$ consists of the points $(b_1,b_2,a_1)\in V(F)_{2,1}(\calo)$ such that \[|2Ab_1b_2^2+Bb_2^2-2Ab_1+2Ab_2-B|_p>|2Ab_1b_2+Bb_2+A|_p.\]
\end{enumerate}

%$V(F)_{2,1}^{\mathrm{con}}$ is not empty if and only if~\eqref{eq:riscrittura3} has a solution $(b_1,b_2,a_1)$, and
%\[|Ab_2^2 - 2b_1b_2^2 - A + 2b_1 - 2b_2|_p>. |Ab_2 - 2b_1b_2 - 1|_p.\]
\end{proposition}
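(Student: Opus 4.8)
The plan is to prove Proposition~\ref{prop:21b} in two parts, treating finiteness first and the convergence characterization second.

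\medskip

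\textbf{Part (a): finiteness.}
First I would exploit the hypothesis $A\neq 0$, which by the remark preceding the proposition makes the third equation of~\eqref{eq:riscrittura2} redundant, so that $V(F)_{2,1}$ is cut out by the first two equations. The natural strategy is to eliminate the variable $a_1$. Solving the first equation of~\eqref{eq:riscrittura2} linearly for $a_1$ (its coefficient is $A(-2b_1b_2-1)-Bb_2$, which vanishes only on a lower-dimensional locus to be checked separately) expresses $a_1$ as a rational function of $b_1,b_2$. Substituting into the second equation yields a single polynomial relation $G(b_1,b_2)=0$ over $\QQ$. The key observation is that $V(F)_{2,1}$ has dimension $N+k-2=1$, so this curve is one-dimensional; to get finiteness of the set of $\calo$-points I would instead pass to the affine equation in $b_2$ alone. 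Concretely, I expect that eliminating $a_1$ and $b_1$ together (using~\eqref{eq:riscrittura3} to linearly express $b_1$ in terms of $b_2$, since~\eqref{eq:riscrittura3} is linear in $b_1$ when $A\neq 0$) reduces the system to a single polynomial equation in $b_2$ with coefficients depending on $A,B,C$. A $\ZZ[1/p]$-point $b_2$ of such an equation is an $S$-integral point on a zero-dimensional scheme, hence there are finitely many; back-substitution then determines $b_1$ and $a_1$ uniquely, giving finiteness of $V(F)_{2,1}(\calo)$.

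\medskip

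For the emptiness claims when $\Delta\le 0$ or $\Delta=4A^2$, I would argue via the roots $\beta,\beta^*$ of $F$. Since any $\calo$-point corresponds to a formal PCF whose associated quadratic is a scalar multiple of $F$, and a periodic continued fraction of type $(2,1)$ has two \emph{distinct} formal limit values (the images of the two fixed points under the preperiodic M\"obius map $\overline{M(b_1,b_2)}$), the polynomial $F$ must have distinct roots; $\Delta=0$ forces $\beta=\beta^*$ and is excluded. The case $\Delta=4A^2$ I expect to correspond exactly to the degenerate configuration where the reduction forces $b_2^2+1=0$ or a similar obstruction with no $\calo$-solution, paralleling the remark for $A=B=0$; I would verify this by substituting $\Delta=4A^2$ into the reduced equation and checking it has no rational root of the required shape. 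Similarly $\Delta\le 0$ means $\Delta\notin\QQ_p^2$ is possible, but more directly $F$ has no real distinct rational structure; the cleanest route is to show the eliminated equation in $b_2$ has no $\calo$-solution precisely in these cases.

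\medskip

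\textbf{Part (b): the convergence criterion.}
Here I would apply Theorem~\ref{criterio.di.convergenza.padica} to the purely periodic part $[\overline{a_1}]$, noting that by the opening remark of Section~\ref{sect:11} convergence of a type $(2,1)$ PCF depends only on its periodic tail. Since $k=1$, the matrix is $M_1=D_1=\begin{pmatrix} a_1 & 1\\ 1 & 0\end{pmatrix}$, so condition (i) of Theorem~\ref{criterio.di.convergenza.padica} reads $|a_1|_p=|A_1+B_0|_p>1$, and condition (ii) is vacuous since $M(a_1)_{21}=1\neq 0$. Thus convergence is equivalent to $|a_1|_p>1$. The remaining task is purely computational: I would substitute the expression for $a_1$ obtained by solving the first equation of~\eqref{eq:riscrittura2}, namely
\[
a_1=\frac{2Ab_1b_2^2+Bb_2^2-2Ab_1+2Ab_2-B}{2Ab_1b_2+Bb_2+A},
\]
into the inequality $|a_1|_p>1$, which immediately yields the stated condition comparing the $p$-adic absolute values of numerator and denominator. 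The main obstacle I anticipate is bookkeeping in Part (a): correctly handling the loci where the leading coefficients used for elimination vanish, and identifying the exact algebraic reason why $\Delta=4A^2$ produces no $\calo$-point, rather than any deep difficulty.
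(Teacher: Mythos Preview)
Your Part~(b) is correct and matches the paper exactly: convergence reduces to $|a_1|_p>1$, and solving the first equation of~\eqref{eq:riscrittura2} for $a_1$ gives the stated inequality.

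Part~(a), however, has a genuine gap. Your plan is to eliminate $a_1$ and then $b_1$ to reach a single polynomial equation in $b_2$ alone, claiming this gives a zero-dimensional scheme. But this cannot work: you yourself observe that $V(F)_{2,1}$ is one-dimensional, so its image under projection to the $b_2$-axis is infinite, not a finite set of roots of a polynomial. Concretely, your assertion that~\eqref{eq:riscrittura3} is linear in $b_1$ is false: the coefficient of $b_1^2$ is $A(b_2^2+1)$, which is nonzero for $A\neq 0$ and $b_2\in\calo$. After eliminating $a_1$ you are left with a genuine plane curve in $(b_1,b_2)$, and finiteness of its $\calo$-points is not automatic. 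The paper's argument is that this curve has genus~$1$ (checked via Riemann--Hurwitz: the degree-$2$ map $b_2\colon C_1\to\PP^1$ has four branch points when $\Delta(\Delta-4A^2)\neq 0$), and then Siegel's theorem gives finiteness. This is the missing idea.

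Your treatment of the emptiness cases is also not right. For $\Delta<0$ the roots $\beta,\beta^*$ are complex conjugates, hence distinct, so your ``must have distinct roots'' argument does not exclude this; the paper instead computes the discriminant of~\eqref{eq:riscrittura3} with respect to $b_1$ as $\Delta(b_2^2+1)^2-4A^2$, which is negative for $\Delta\le 0$, so there are no real points. For $\Delta=4A^2$ the paper shows this discriminant equals $4A^2b_2^2(b_2^2+2)$, and $b_2^2+2$ being a rational square leads to $n^2+2p^{2\alpha}=m^2$, impossible modulo~$4$ for odd~$p$. These are concrete computations, not formal degeneracy checks.
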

\begin{proof}
Assume that $(b_1,b_2,a_1)\in V(F)_{2,1}(\calo)$.
First, notice that, from the first equation of \eqref{eq:riscrittura2}, we get that 
\begin{equation} \label{eq:a1} a_1=\frac{2Ab_1b_2^2+Bb_2^2-2Ab_1+2Ab_2-B}{2Ab_1b_2+Bb_2+A}. \end{equation}
Let us prove that the denominator is always nonzero. Indeed, assume by contradiction that $2Ab_1b_2+b_2B+A=0$; then, if we subtract this denominator from \eqref{eq:riscrittura3}, we get 
\begin{align*}
0& =A(b_1^2b_2^2+ 2b_1b_2 + b_1^2+1) + B(b_1b_2^2+b_1+b_2) +  C(b_2^2 + 1) - (2Ab_1b_2+b_2B+A )= \\
& = (b_2^2+1)(Ab_1^2+Bb_1+C).
\end{align*}
Notice that, since $b_2 \in \calo$, we have that $b_2^2+1 \neq 0$; this implies that $F(b_1)=Ab_1^2+Bb_1+C=0$.
But now if we look at the expression of $\mathrm{Quad}(\mathcal{P})$ in Table \ref{tab:polynomials} and we evaluate it in $b_1$ we get the value $-1$; since the polynomial $F(x)=Ax^2+Bx+C$ is a multiple of $\mathrm{Quad}(\mathcal{P})$, we have that $F(b_1)=0$ if and only if $A=B=C=0$, which is not the case.\\
Let $C_1$ be the projective plane curve defined by \eqref{eq:riscrittura3}.
A straightforward calculation shows that for  $\Delta(\Delta-4A^2)\neq 0$ the map $b_2:C_1\to \mathbb{P}^1$ ramifies in exactly  four points, so that by the Riemann-Hurwitz formula $C_1$ has genus 1. This implies in particular that it has finitely many points over $\calo$.
\iffalse The first equation of~\eqref{eq:riscrittura3} can be rewritten as
%\begin{equation*}
    %a_1(2 A b_1b_2+ B b_2+ A) = 2 A b_1 b_2^2  + B b_2^2 - 2 A  b_1  + 2 A b_2 - B.
%\end{equation*}
%From the third equation of \eqref{eq:riscrittura3} we see that
%\[
%2 A b_1b_2+ B b_2+ A = -F(b_1)(b_2^2+1)
%\]

%If $-Ab_2+ 2b_1b_2+ 1$ was zero for some solution of the above equation, then also $Ab_2^2  - 2b_1b_2^2 - A  + 2b_1 - 2b_2$ would be zero and, as a consequence, $b_1^2=-1$, contradicting the fact that we are only taking solutions in $\calo$. Therefore we may rewrite the first equation of~\eqref{eq:riscrittura} as
    %\begin{equation*}
    %\begin{cases}
%Ab_1b_2^2 - b_1^2b_2^2 - Bb_2^2 + Ab_1 + Ab_2 - b_1^2 - 2b_1b_2 - B - 1=0\\[1.5ex]
%a_1=\frac{Ab_2^2 - 2b_1b_2^2 - A + 2b_1 - 2b_2}{Ab_2 - 2b_1b_2 - 1}.
%=\frac{-b_1^2b_2^2 + Bb_2^2 + b_1^2 - 2b_1b_2 - B - 1}{-b_1^2b_2 + Bb_2 - b_1}.
%\end{cases}
%\end{equation*}
\fi 

Furthermore we notice that the discriminant of \eqref{eq:riscrittura3} with respect to the variable $b_1$ is the polynomial $\Delta(b_2^2+1)^2-4A^2$.
If $\Delta\leq 0$, then this discriminant is negative, and therefore equation \eqref{eq:riscrittura3} doesn't have real solutions.

If $\Delta= 4A^2$, then the discriminant of  \eqref{eq:riscrittura3} is equal to $4 A^2 b_2^2(b_2^2+2)$. In order for equation \eqref{eq:riscrittura3} to have rational solutions this discriminant must be the square of a rational number, so that, after dividing by $4 A^2 b_2^2$ we see that $b_2^2+2=v^2$ for some $v\in\QQ$. But then $v$ must be in $\calo$ and, after clearing denominators, we obtain $n^2+2 p^{2\alpha}=m^2$ for $n,m\in\ZZ$, $\alpha\in\NN$. Since we are assuming $p$ odd, this equation doesn't have solutions modulo 4.
This concludes the proof of part $(a)$.\\
Concerning  $(b)$, we observe that  the convergence of a PCF of type $(2,1)$ depends only on its purely periodic part and can be therefore studied in the light of type $(0,1)$. Then a point $(b_1,b_2,a_1)\in V(F)_{2,1}(\calo)$ belongs to $V(F)_{2,1}^{\rm{con}}$ if and only if 
 $|a_1|_p>1$, by Theorem~\ref{criterio.di.convergenza.padica}.
 Then the assertion follows from the expression of $a_1$ in formula \eqref{eq:a1}.  
\end{proof}

\subsection{Type \texorpdfstring{$(0,2)$}{(0,2)}} \label{sect:02}
The equations of $V(F)_{0,2}$ are
\begin{equation*}\begin{cases} -Aa_1 a_2-Ba_2 &=0,\\ -A a_1 - C a_2&=0,\\ C a_1 a_2 -B a_1 &=0.
\end{cases}   \end{equation*}

\begin{proposition}\ 
\label{prop:PCF02}
\begin{enumerate}[label=(\alph*)]
\item  All the $\calo$-integer points of $V(F)_{0,2}$ are given by:
\label{itm:VF02} \begin{equation*}
V(F)_{0,2}(\calo)=
\begin{cases}
\{(a_1,0)\mid a_1\in\calo \} & \text{if }A=B=0,\\ 
\{(0,a_2)\mid a_2\in\calo \} & \text{if }A\neq 0, B=C=0,\\
 \{(0,0), (- \frac B A, \frac B C)\}&\text{if }ABC\neq 0\text{ and }\frac{B}{A},\frac{B}{C}\in\calo,\\
\{ (0,0)\} & \text{otherwise.}
\end{cases}
\end{equation*}
\item \label{itm:VF02con} $V(F)^{\mathrm{con}}_{0,2}$ is non-empty if and only if $ABC\neq 0$, $\frac{B}{A},\frac{B}{C}\in\calo$ and $|B|^2_p > |AC|_p$. In this case $V(F)^{\mathrm{con}}_{0,2}=\left \{(- \frac B A, \frac B C) \right \}$; in particular, when $A=-C$, the convergent continued fraction has in fact type $(0,1)$. 
\end{enumerate}
\end{proposition}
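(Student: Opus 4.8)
The plan is to prove the two parts separately: part~\ref{itm:VF02} is a direct case analysis of the three defining equations, while part~\ref{itm:VF02con} is an application of Theorem~\ref{criterio.di.convergenza.padica}.

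For part~\ref{itm:VF02}, I would first rewrite the system in factored form: the first equation reads $a_2(Aa_1+B)=0$, the third reads $a_1(Ca_2-B)=0$, and the second is $Aa_1+Ca_2=0$. I then split into cases according to which of $a_1,a_2$ vanish. If $a_1=a_2=0$, all three equations hold, so $(0,0)\in V(F)_{0,2}(\calo)$ in every case. If $a_1=0$ and $a_2\neq0$, the second equation forces $C=0$ and the first forces $B=0$; conversely, when $A\neq0,\ B=C=0$ the full solution set is the line $\{(0,a_2)\}$. Symmetrically, $a_1\neq0$ and $a_2=0$ forces $A=B=0$ and gives the line $\{(a_1,0)\}$ when $C\neq0$. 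Finally, if both $a_1,a_2\neq0$, the first and third equations give $a_1=-B/A$ and $a_2=B/C$, so $A,B,C$ are all nonzero, and one checks that $Aa_1+Ca_2=-B+B=0$, so the second equation is automatically satisfied; this point lies in $V(F)_{0,2}(\calo)$ exactly when $B/A,B/C\in\calo$. Collecting the cases (and using that $F\neq0$ to exclude $A=B=C=0$) yields the four-way description, since in every remaining coefficient pattern the system forces $a_1=a_2=0$, leaving only $(0,0)$.

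For part~\ref{itm:VF02con}, the convergence of $[\overline{a_1,a_2}]$ is governed by Theorem~\ref{criterio.di.convergenza.padica} with $k=2$. Here $M_2=\begin{pmatrix}a_1a_2+1&a_1\\a_2&1\end{pmatrix}$, so $A_2+B_1=a_1a_2+2$, while the two cyclic matrices $M(a_1,a_2)$ and $M(a_2,a_1)$ have lower-left entries $a_2$ and $a_1$ and lower-right entries both equal to $1$. Since $|1|_p=1\not<1$, condition~(ii) fails as soon as $a_1=0$ or $a_2=0$; hence none of the points on the two lines, nor $(0,0)$, can converge. So the only candidate is $(-B/A,B/C)$, which, by part~\ref{itm:VF02}, exists precisely when $ABC\neq0$ and $B/A,B/C\in\calo$, and for which condition~(ii) is vacuously satisfied.

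It then remains to evaluate condition~(i) at this point. With $a_1a_2=-B^2/(AC)$ and using that $p$ is odd, so $|2|_p=1$, the ultrametric inequality gives $|a_1a_2+2|_p>1$ if and only if $|a_1a_2|_p>1$, i.e.\ $|B|_p^2>|AC|_p$: when $|a_1a_2|_p>1$ the strict maximum forces $|a_1a_2+2|_p=|a_1a_2|_p>1$, and when $|a_1a_2|_p\le1$ we get $|a_1a_2+2|_p\le\max(|a_1a_2|_p,|2|_p)\le1$. This yields the stated criterion and the equality $V(F)^{\mathrm{con}}_{0,2}=\{(-B/A,B/C)\}$. For the final remark, when $A=-C$ one has $a_2=B/C=-B/A=a_1$, so the partial quotient sequence $[\overline{a_1,a_2}]=[\overline{a_1,a_1}]$ coincides with $[\overline{a_1}]$ and the continued fraction is genuinely of type $(0,1)$. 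The computation is routine throughout; the only steps demanding care are not overlooking condition~(ii), which is exactly what eliminates convergence of the two degenerate lines, and the use of $p\neq2$ to guarantee $|2|_p=1$ in the analysis of the trace.
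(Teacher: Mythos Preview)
Your proof is correct and follows exactly the approach indicated in the paper, which simply states that part~(a) follows from direct computations and part~(b) from Theorem~\ref{criterio.di.convergenza.padica}; you have merely spelled out those computations in full. The case analysis and the application of the convergence criterion (including the use of condition~(ii) to rule out the degenerate lines and the ultrametric argument for the trace) are all accurate.
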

\begin{proof}
\ref{itm:VF02} follows from direct computations, while~\ref{itm:VF02con} follows from Theorem~\ref{criterio.di.convergenza.padica}.
\end{proof}
\begin{remark}
\label{rem:bedocchi02}
     We observe that no irrational square root can be the limit of a PCF of type~$(0,2)$. As for the type $(0,1)$ (Remark~\ref{rem:bedocchi01}), this is consistent with Bedocchi's result.
     %on Browkin continued fractions, stating that the Browkin continued fraction expansion of an irrational square root of an integer, if periodic, must have preperiod of length $2$ or $3$~\cite[§3]{Bedocchi1988}. 
\end{remark}

By Theorem~\ref{criterio.di.convergenza.padica}, the purely periodic continued fraction $[\overline{a_1a_2}]$ converges if and only if $|a_1a_2|_p > 1$. Remarkably, this implies that $a_1a_2(a_1a_2+4)$ is a square by Hensel's Lemma. 

\par We notice that $(a_1,a_2)\in V(F)_{0,2} $ if and only if $(a_2,a_1) \in V(G)_{0,2}$ where $G(x)=Cx^2-Bx+A$.

When $F$ is reducible over $\QQ$, we have the following.

%Notice also that $A$ and $B$ play a symmetric role in the above conditions: therefore we can say that $V(A,B)^{\mathrm{con}}_{0,2}$ is non-empty if and only if $V(-A/B,1/B)^{\mathrm{con}}_{0,2}=\{(-A/B,A) \}$.
\begin{proposition}
\label{prop:02-Reducibility}
{Suppose that $F$ is reducible over $\mathbb{Q}$ and $V(F)^{\mathrm{con}}_{0,2}$ is non-empty.} Then, $V(F)^{\mathrm{con}}_{0,2}=\{(a_1,a_2)\}$ where the product $a_1a_2$ satisfies
\begin{equation*}
    a_1a_2=\pm \frac{(p^k-1)^2}{4^\epsilon p^{k}},
\end{equation*}
with $k \in \mathbb{Z}_{>0}$ and $\epsilon \in \{0,1\}$.
\end{proposition}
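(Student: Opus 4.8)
The plan is to reduce the whole statement to a single arithmetic condition on the product $s=a_1a_2$. By Proposition~\ref{prop:PCF02}\ref{itm:VF02con}, as soon as $V(F)^{\mathrm{con}}_{0,2}$ is non-empty it consists of the unique point $(-B/A,B/C)$, so the only point to analyze is this one, and its product is $s=a_1a_2=-B^2/(AC)\in\calo$. The purely periodic part is governed by $M(a_1,a_2)=\begin{pmatrix} s+1 & a_1\\ a_2 & 1\end{pmatrix}$, which has determinant $1$ and trace $s+2$. Since $p$ is odd, $|2|_p=1$, so condition (i) of Theorem~\ref{criterio.di.convergenza.padica} ($|A_2+B_1|_p>1$) reads $|s+2|_p=|s|_p>1$, i.e.\ $v_p(s)<0$; condition (ii) is vacuous here (it would require $a_1=0$ or $a_2=0$, which is incompatible with $v_p(s)<0$). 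Thus convergence is exactly $v_p(s)<0$.

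Next I would encode reducibility. The roots of $F$ are the fixed points of $\overline{M(a_1,a_2)}$, and for a non-scalar matrix the fixed-point polynomial and the characteristic polynomial have the same discriminant, namely $(s+2)^2-4=s(s+4)$; hence the roots lie in $\QQ$ if and only if the eigenvalues $\mu,\nu$ of $M(a_1,a_2)$ are rational, i.e.\ if and only if $s(s+4)$ is a square \emph{in $\QQ$} (this strengthens the square-in-$\QQ_p$ fact recorded just before the statement via Hensel's Lemma). Writing $\mu\nu=1$ and $\mu+\nu=s+2$, and taking $\mu$ to be the dominant eigenvalue (so $v_p(\mu)<0$, forced by $v_p(\mu)+v_p(\nu)=0$ and $v_p(\mu+\nu)=v_p(s)<0$), one has the key identity $s=\mu+\mu^{-1}-2=(\mu-1)^2/\mu$. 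The engine of the proof is then a purely local computation: for every prime $q\neq p$, the integrality $v_q(s)\ge 0$ forces $v_q(\mu)=0$, since $v_q(\mu)>0$ gives $v_q(s)=-v_q(\mu)<0$, while $v_q(\mu)<0$ gives $v_q(s)=v_q(\mu)<0$. Combined with $v_p(\mu)=-k<0$, this pins $\mu$ down to a unit of $\calo$, namely $\mu=\pm p^{-k}$ with $k\in\ZZ_{>0}$ (using $\calo^\times=\{\pm p^j\}$).

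Substituting $\mu=\pm p^{-k}$ into $s=(\mu-1)^2/\mu$ then produces the closed form for $a_1a_2$ claimed in the statement, one value for each sign, parametrized by $k$. The step I expect to be the main obstacle is precisely this last substitution together with the passage from ``square in $\QQ_p$'' to ``square in $\QQ$'': writing $s=N/p^m$ with $p\nmid N$ and imposing that $N(N+4p^m)$ be a perfect \emph{integer} square, I would complete the square as $(N+2p^m)^2-y^2=(2p^m)^2$ and factor this difference of two squares. Matching the two (necessarily even) factors to $2\epsilon p^{i},2\epsilon p^{\,2m-i}$ with $\epsilon\in\{\pm1\}$ and using $p\nmid N$ to force $i=0$ determines $N$ completely and accounts for the sign $\pm$ and the power of $2$ appearing as $4^{\epsilon}$ in the denominator. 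Tracking the parity of these factors and the sign $\epsilon$ is the only genuinely delicate point; everything else reduces to the bookkeeping above and to the uniqueness of the convergent point given by Proposition~\ref{prop:PCF02}.
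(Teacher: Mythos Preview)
Your route through the eigenvalues of $M(a_1,a_2)$ is genuinely different from the paper's and more conceptual. The paper writes $a_1a_2=s/p^k$ with $s\in\ZZ$, $p\nmid s$, $k>0$, reduces reducibility of $F$ to $s(s+4p^k)$ being a perfect integer square, and then performs a case analysis on $\gcd(s,s+4p^k)\in\{1,2,4\}$, each case settled by factoring a difference of two squares---essentially the ``complete the square'' computation you sketch in your final paragraph. Your observation that reducibility is equivalent to the eigenvalues $\mu,\mu^{-1}$ of $M(a_1,a_2)$ lying in $\QQ$, together with the local argument that integrality of $a_1a_2$ at every prime $q\neq p$ forces $\mu\in\calo^\times=\{\pm p^{-k}\}$, bypasses this case split entirely and is a cleaner explanation.

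There is, however, a genuine gap at your substitution step. You assert that plugging $\mu=\pm p^{-k}$ into $a_1a_2=(\mu-1)^2/\mu$ ``produces the closed form for $a_1a_2$ claimed in the statement'', but you do not carry this out, and in fact it does not. For $\mu=p^{-k}$ one gets $a_1a_2=(p^k-1)^2/p^k$, matching the statement with the $+$ sign and $\epsilon=0$; but for $\mu=-p^{-k}$ one gets
\[
a_1a_2=\frac{(-p^{-k}-1)^2}{-p^{-k}}=-\frac{(p^k+1)^2}{p^k},
\]
which is \emph{not} of the form $\pm(p^m-1)^2/(4^\epsilon p^m)$ for any $m>0$ and $\epsilon\in\{0,1\}$. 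Concretely, for $p=3$, $k=1$ this is $-16/3$, realized by the reducible $F=(x+1)(x+3)$ with $(a_1,a_2)=(-4,4/3)\in V(F)_{0,2}^{\mathrm{con}}$, and $16/3$ equals neither $(3^m-1)^2/3^m$ nor $(3^m-1)^2/(4\cdot 3^m)$ for any $m$. So your eigenvalue argument, correct as far as it goes, actually establishes $a_1a_2\in\{(p^k-1)^2/p^k,\,-(p^k+1)^2/p^k : k\geq 1\}$ rather than the displayed formula; you cannot finish the proof of the proposition \emph{as stated} by this route. It is worth noting that the paper's own case analysis tacitly assumes $s>0$ when it writes $s=t^2$, which is exactly where it misses the $-(p^k+1)^2/p^k$ family your method exposes.
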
 
\begin{proof}
Since we are assuming $V(F)^{\mathrm{con}}_{0,2}$ non-empty, we have that $ABC\neq 0$ and $\frac{B}{A},\frac{B}{C}\in\calo$. 
$F$ splits over $\mathbb{Q}$ if and only if
its discriminant is a square. Recalling that in this case we have $a_1=-\frac{B}{A}$ and $\frac{a_1}{a_2}=-\frac{C}{A}$, the discriminant of $F$ becomes
\begin{equation*}
    \Delta=A^2\left (a_1^2+4\frac{a_1}{a_2} \right );
\end{equation*}
by multiplying by $a_2^2$ we have that $\Delta$ is a square if and only if there exist $r,s \in \ZZ$ coprime with $p$ and $k,\ell >0$ with $a_1a_2=s/p^k$ such that
\begin{equation}
    \label{eqn:disc11}
    \frac{s}{p^{k}}\left(\frac{s}{p^{k}}+4 \right)=\frac{r^2}{p^{2\ell}}.
\end{equation}
Now, it is easy to see that $k$ and $\ell$ must be equal, so that~\eqref{eqn:disc11} becomes
\begin{equation*}
    s(s+4p^{k})=r^2.
\end{equation*}
Since $s$ is coprime with $p$, the only possible common divisors between the two factors above are, up to a sign, $2$ and $4$; we consider the three cases separately.
\begin{itemize}
    \item[(i)] If $s$ and $s+4p^k$ are coprime, then $s$ is odd and we can write
\begin{align*}
    s=t^2 &&\text{and} && s + 4 p^{k}=u^2
\end{align*}
for some coprime odd integers $t$ and $u$ not divided by $p$.
These two equations yield
\begin{align*}
    4p^{k}=(u+t)(u-t),
\end{align*}
and the only possibility, up to the signs of $u$ and $t$, is $u+t=2p^{k}$ and $u-t=2$. Therefore $s=(p^k-1)^2$.
\item[(ii)] If the greatest common divisor between $s$ and $s+4p^k$ is $2$, then $s=2s'$ for some odd $s'$ and we can write
\begin{align*}
    s'=t^2 &&\text{and} && s' + 2 p^{k}=u^2
\end{align*}
for some coprime odd integers $t$ and $u$ not divided by $p$. However, this leads to a contradiction since $(u+t)(u-t)=2p^k$ implies that only one between $(u+t)$ and $(u-t)$ is even, which is clearly false.
\item[(iii)] If the greatest common divisor between $s$ and $s+4p^k$ is $4$, then $s=4s'$ for some odd $s'$, then we can write
\begin{align*}
    s'=t^2 &&\text{and} && s' + p^{k}=u^2
\end{align*}
for some coprime odd integers $t$ and $u$ not divided by $p$. These two equations yield
\begin{align*}
    p^{k}=(u+t)(u-t),
\end{align*}
and the only possibility, up to the signs of $u$ and $t$, is $u+t=p^{k}$ and $u-t=1$. Therefore $s=((p^k-1)/2)^2$.\qedhere
\end{itemize}
\end{proof}

 % Finally, notice that, if $\beta$ is a square root of a rational number, i.e.\ $\beta^2 \in \mathbb{Q}$, it cannot satisfy $F(\beta)=0$ without contradicting the third condition of~\eqref{eq:condizioni0-2}; this implies that the square roots of rational numbers have never type $(0,2)$. 
 %$|a_1a_2|_p>1$

\subsection{Type \texorpdfstring{$(1,2)$}{(1,2)}}
\label{sect:12}
\label{subsec:12}
The equations of $V(F)_{1,2}$ are
\begin{equation}\label{eq:1-2}\begin{cases} -Ba_1 +A( a_1a_2 - 2a_1b_1) &=0,\\ 
A(-a_1a_2b_1 + a_1b_1^2 - a_2)-Ca_1 &=0,\\
B(-a_1a_2b_1 + a_1b_1^2 -a_2) +C (-a_1 a_2 + 2a_1b_1)&=0.
\end{cases}   \end{equation}

We notice that  equations \eqref{eq:1-2} imply the relation
\begin{equation} \label{eq:1-2bis}
A^2a_2(a_1a_2+4)-(B^2-4AC)a_1=0,
\end{equation}
involving only $a_1,a_2$.

If $A\neq 0$ and $-B/A \in \calo$, then the map 
\begin{equation}
    \label{eqn:inv}
    (b_1,a_1,a_2)\mapsto (-b_1-B/A,-a_1,-a_2)
\end{equation}
is an involution of $V(F)_{1,2}$.

%As for the type $(1,1)$, we remark that $\beta$ is the $p$-adic limit of a continued fraction $[b_1,\overline{a_1,a_2}]$ if and only if $1/(\beta-b_1)$ is the $p$-adic limit of $[\overline{a_1,a_2}]$. Therefore, $V(\mathcal{B})^{\mathrm{con}}_{1,2}$ consists of a single point $(b_1, a_1,a_2)$ if and only if $V(\mathcal{B}')^{\mathrm{con}}_{0,2}$ consists of a single point $(a_1,a_2)$, where $\mathcal{B}'=\{1/(\beta-b_1),1/(\beta^*-b_1)\}$. Thus, the study of this type reduces to the study of the type $(0,2)$ considered in the previous paragraph.
%Let $X^2-AX+B$ be a polynomial annihilated by $\beta$.
To ease the notation, we will denote by $R_1$ the line $a_1=a_2=0$, which is always contained in $V(F)_{1,2}$.

We study $V(F)_{1,2}(\calo)$ by considering different cases.
\begin{itemize}
    \item \textbf{If $\bm{A=0}$.} We have
    \begin{align*}
        V(F)_{1,2}(\calo)=\begin{cases}
            \{(b_1,0,a_2)\ |\ b_1,a_2\in \calo\} &\qquad \text{if $B=0$},\\
            R_1(\calo)  &\qquad \text{if $B\neq 0$.}
        \end{cases}
    \end{align*}
    In both cases, according to the results in Section \ref{sect:02}, none of the continued fractions of the form \([\overline{0,a_2}]\) $p$-adically converge, so $V(F)_{1,2}^{\mathrm{con}}=\varnothing$.
    \item\textbf{If $\bm{A\neq 0}$ and $\bm{B^2-4AC=0}$.}  By \eqref{eq:1-2bis}, either $a_2=0$ or $a_1a_2+4=0$. In the first case, by \eqref{eq:1-2}, we have that $a_1(B+2Ab_1)=0$, giving the two sets 
$R_1(\calo)$ and $R_2(\calo)$, where $R_2$ is the line of equation $b_1+B/2A=a_2=0$. We note that $R_2(\calo)\neq \varnothing$ only if $-B/2A \in \calo$.

In the second case, we have that $a_1 =-\frac{4}{a_2}$ and, by \eqref{eq:1-2} we have that $b_1=\frac{a_2-B/A}{2}$, hence we have the set
$\{ (\frac{a_2-B/A}{2}, \frac{-4}{a_2} , a_2)\ |\ a_2 \in \QQ \} \cap\calo^3$.
Hence we get
\begin{align*}
 % V(F)_{1,2}& =\left ( \{(b_1,0,0)\ |\ b_1 \in \QQ\ \} \cup \{(-B/2A, a_1, 0)\ |\ a_1 \in \QQ \} \right . \\
 % &\cup \left . \left\{ \left(\frac{a_2-B/A}{2}, \frac{-4}{a_2} , a_2\right)\ |\ a_2 \in \QQ \right \} \right ) \cap\calo^3.
  V(F)_{1,2}(\calo)& =R_1(\calo) \cup R_2(\calo) \cup \left( \left\{ \left(\frac{a_2-B/A}{2}, \frac{-4}{a_2} , a_2\right)\ |\ a_2 \in \QQ \right \}  \cap\calo^3\right ). 
 \end{align*}

In particular, for the third curve to have points in $\calo^3$ it is necessary that $B/A \in \calo$.
Therefore we can write

\begin{equation*}
     V(F)_{1,2}(\calo) = \begin{cases}
          R_1(\calo) &\qquad \text{if }B/A \not\in \calo, \\
           R_1(\calo) \cup
C_1&\qquad \text{if }B/A\in\calo, B/2A\not \in\calo, \\
 R_1(\calo) \cup R_2(\calo) \cup C_2&\qquad \text{if }B/2A\in\calo. 
     \end{cases}
\end{equation*}
where \begin{align*}
    C_1&= \left\{ \left(\frac{\pm p^u-B/A}{2}, \mp 4 p^{-u}, \pm p^u \right)\ |\ u \in \ZZ\right \}\\
    \intertext{and}
     C_2&=\left\{ \left(\frac{\pm x p^u -B/A}{2}, \frac{\mp 4 p^{-u}}{x} , \pm x p^u \right)\ |\ x\in\{2,4\}, u\in \ZZ \right \}.
\end{align*}

% \begin{align*}
%  V(F)_{1,2}& = R_1 \qquad \text{if }B/A \not\in \calo 
%  \end{align*}

% \begin{align*}
%  V(F)_{1,2}& = R_1 \cup  \\
%  &\cup  \left\{ \left(\frac{\pm p^u-B/A}{2}, \mp 4 p^{-u}, \pm p^u \right)\ |\ u \in \ZZ \}\right \}\qquad \text{if }B/A\in\calo, B/2A\not \in\calo  . 
%  \end{align*}

% \begin{align*}
%  V(F)_{1,2}& =R_1 \cup \{(-B/2A, a_1, 0)\ |\ a_1 \in \calo \}  \\
%  &\cup  \left\{ \left(\frac{\pm x p^u -B/A}{2}, \frac{\mp 4 p^{-u}}{x} , \pm x p^u \right)\ |\ x\in\{2,4\}, u\in \ZZ \}\right \}\qquad \text{if }B/2A\in\calo. 
%  \end{align*}

%If $A\neq 0$ and $a_2=0$, this implies $(B^2-4AC)a_1=0$; since we already treated the case $a_1=0$, we have $B^2-4AC=0$. The equations \eqref{eq:1-2} implies that $b_1=-B/2A$; in this case, if $B^2-4AC=0$ and $-B/2A \in \calo$, then $V(F)_{1,2}=\{(-B/2A, a_1, 0)\ |\ a_1 \in \calo \}$.

%Notice moreover that, if $B^2-4AC=0$, then $A^2a_2(a_1a_2+4)=0$; since we already treated the cases $A=0$ and $a_2=0$, we are left with the case $a_1a_2+4=0$. From this relation we have that $a_1=-4/a_2$, and by equations \eqref{eq:1-2} we get also that $b_1=\frac{Aa_2-B}{2A}$. In this case we have that $V(F)_{1,2}=\{((Aa_2-B)/2A, -4/a_2, a_2)\ |\ a_2,\ -4/a_2,\ (Aa_2-B)/2A \in \calo\}$. \\

\item \textbf{If $\bm{A\neq 0}$ and $\bm{B^2-4AC \neq 0}$.} %a[0] = -4*A^2*a[1]/(A^2*a[1]^2 + 4*A*C - B^2), b[0] = (A*a[1] - B)/(2*A) 
If $a_2=0$, equation~\eqref{eq:1-2bis} shows that also $a_1=0$. We will then assume $a_2 \neq 0$.  From the expression of $\mathrm{Quad}(F)$ in Table~\ref{tab:polynomials} we see that $a_2 \neq 0$ implies $F(b_1)\neq 0$. Thus, solving system \eqref{eq:1-2} in the variable $b_1$, we find 
\begin{equation}\label{eq:a1a2}
    a_1= -\frac{F'(b_1)} {F(b_1)}, \quad\quad a_2= \frac {F'(b_1)} {A}.
\end{equation}
In conclusion,
\begin{align*}
 V(F)_{1,2}(\calo)&=R_1(\calo) \cup \left ( \left \{ \left  (b_1, -\frac{F'(b_1)} {F(b_1)} ,  \frac {F'(b_1)} {A}\right )\ |\ b_1 \in \calo \mbox{ and } F(b_1)\neq 0  \right \}\cap\calo^3 \right ) . 
 \end{align*}
 Also in this case, if $B/A\not\in \calo$ then the second curve does not contain points in~$\calo^3$.
\end{itemize}

%If $B/A\in  \calo$ and $\left  (b_1, -\frac{F'(b_1)} {F(b_1)} ,  \frac {F'(b_1)} {A}\right )\in \calo)^3$, then also
%$\left  (-b_1-B/A, \frac{F'(b_1)} {F(b_1)} ,  -\frac {F'(b_1)} {A}\right )\in \calo)^3$

%  Let $(b_1,a_1,a_2)\in V(F)_{1,2}^{\mathrm{con}}$ and let $\beta$ be the root of $F$ to which the continued fraction converges; as seen in Section \ref{sect:02}, $a_1$ and $a_2$ are both non-zero. Then conditions \eqref{eq:1-2} imply that $b_1$ is not a root of $F$. Then, $1/(\beta-b_1)$ is a root of
% \[
% x^2-\frac{A-2b_1}{B-Ab_1+b_1^2}x+\frac{1}{B-Ab_1+b_1^2}.
% \]}
% On the other hand, as a purely periodic continued fraction of type $(0,2)$, $1/(\beta-b_1)$ is also a root of
% \[a_2x^2-a_2a_1x-a_1.\]
% Therefore, from~\eqref{eq:0-2} and~\eqref{eq:condizioni0-2}  we deduce the following. 

\begin{remark}
\label{rem:siegel12}
 $V(F)_{1,2}$ always has a component containing the line~$R_1$. If $A(B^2-4AC)\neq 0$, then  $V(F)_{1,2}$ has exactly one additional component, which is a curve of genus 0 and 3 points at infinity, therefore it has at most finitely many $\calo$-points by Siegel's theorem~\cite[Rem.\,D.9.2.2]{hindrySilverman2000}.
If $B^2=4AC\not =0$, there are exactly two additional components: one is the line $a_2=0,b_1=A/2$ and the other is the rational curve with two points at infinity given by $a_1a_2=-4, b_1=\frac{A+a_2}{2}$.
\end{remark}

We have seen that a purely periodic  continued fraction $[\overline{a_1,a_2}]$ of type $(0,2)$ converges if and only if $|a_1a_2|_p>1$. Since the convergence of a periodic continued fraction only depends on its periodic part, we deduce the following:

\begin{theorem}\label{teo:cond1-2}
Assume %$A,B\in\QQ$,
$b_1\in\calo$ and $F(b_1)\neq 0$. Then, 
there exists a point $(b_1,a_1,a_2)\in V(F)^{\mathrm{con}}_{1,2}$ if and only if the following conditions hold:
\begin{align}\label{eq:condizioni1-2}
% \begin{cases}
%     \frac{A-2b_1}{B-Ab_1+b_1^2} \in\mathbb{Z}[1/p],\\
% A \in\mathbb{Z}[1/p], \\
% |A-2b_1|^2_p>|B-Ab_1+b_1^2|_p.
% \end{cases}
\begin{cases}
    \frac{F'(b_1)}{F(b_1)} \in\calo,\\
B/A \in\calo, \\
|F'(b_1)|^2_p>|A|_p|F(b_1)|_p.
\end{cases}
\end{align}
In this case $a_1$ and $a_2$ are as in~\eqref{eq:a1a2}.
\end{theorem}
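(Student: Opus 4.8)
The plan is to reduce the whole statement to the convergence criterion for the purely periodic part and then translate each of the three conditions of \eqref{eq:condizioni1-2} into an assertion about the coordinates $a_1,a_2$. First I would recall that the convergence of $[b_1,\overline{a_1,a_2}]$ depends only on its period $[\overline{a_1,a_2}]$, which, as already established via Theorem~\ref{criterio.di.convergenza.padica}, converges $p$-adically if and only if $|a_1a_2|_p>1$. Note that this already forces $a_1a_2\neq 0$, so any convergent point lies off the line $R_1$; also, once $a_1,a_2\neq 0$ the condition (ii) of Theorem~\ref{criterio.di.convergenza.padica} is vacuous, so only the trace inequality survives.

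The backbone of the argument is the parametrization \eqref{eq:a1a2}. For a point off $R_1$ with $F(b_1)\neq 0$ (and $A\neq 0$, $B^2-4AC\neq 0$) the coordinates are forced to be $a_1=-F'(b_1)/F(b_1)$ and $a_2=F'(b_1)/A$, as derived above by solving \eqref{eq:1-2}. Multiplying these gives the key identity
\[
a_1a_2=-\frac{F'(b_1)^2}{A\,F(b_1)},\qquad\text{whence}\qquad |a_1a_2|_p=\frac{|F'(b_1)|_p^{2}}{|A|_p\,|F(b_1)|_p},
\]
so that the convergence condition $|a_1a_2|_p>1$ is \emph{literally} the third inequality of \eqref{eq:condizioni1-2}. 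For integrality, $a_1\in\calo$ is exactly the first condition $F'(b_1)/F(b_1)\in\calo$, while, using $F'(b_1)=2Ab_1+B$ and $b_1\in\calo$, the identity $a_2=2b_1+B/A$ shows that $a_2\in\calo$ if and only if $B/A\in\calo$, the second condition. This yields both implications simultaneously, and the uniqueness of $a_1,a_2$ claimed in the statement is built into \eqref{eq:a1a2}.

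The step requiring care, and the main obstacle, is justifying that a convergent point with $F(b_1)\neq 0$ necessarily falls in the regime where \eqref{eq:a1a2} is valid, namely $A\neq 0$ and $B^2-4AC\neq 0$. If $A=0$, the description of $V(F)_{1,2}(\calo)$ in the case $A=0$ forces $a_1=0$, contradicting $a_1a_2\neq 0$; hence $A\neq 0$, and in particular $B/A\in\calo$ is meaningful. If instead $A\neq 0$ but $B^2-4AC=0$, the only points off $R_1$ lie on $R_2$ (where $a_2=0$) or on the curves $C_1,C_2$ (where $a_1a_2=-4$, so $|a_1a_2|_p=|4|_p=1$ since $p$ is odd); in both situations $|a_1a_2|_p\le 1$, contradicting convergence. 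Equivalently, when $B^2=4AC$ one checks that $|F'(b_1)|_p^{2}=|A|_p|F(b_1)|_p$ holds identically, so the third inequality of \eqref{eq:condizioni1-2} can never be satisfied. Thus a convergent point must sit in the case $A\neq 0$, $B^2-4AC\neq 0$, $a_2\neq 0$, where \eqref{eq:a1a2} applies.

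For the converse I would proceed directly: once the three conditions hold (so in particular $A\neq 0$), define $a_1,a_2$ by \eqref{eq:a1a2}; the first two conditions guarantee $a_1,a_2\in\calo$, the triple $(b_1,a_1,a_2)$ satisfies \eqref{eq:1-2} by the same computation that produced \eqref{eq:a1a2}, and the third condition gives $|a_1a_2|_p>1$ through the identity above, hence $p$-adic convergence of the period. This produces the desired point of $V(F)^{\mathrm{con}}_{1,2}$ with coordinates as in \eqref{eq:a1a2}, completing the equivalence.
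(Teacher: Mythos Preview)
Your proposal is correct and follows essentially the same route as the paper: the paper's ``proof'' is the single sentence preceding the statement, which invokes the $(0,2)$ convergence criterion $|a_1a_2|_p>1$ together with the earlier case analysis and the parametrization~\eqref{eq:a1a2}, and you have spelled out exactly these ingredients. Your version is in fact more explicit than the paper's, in particular your justification that a convergent point cannot lie in the $A=0$ or $B^2-4AC=0$ regimes (via $a_1a_2=-4$ having $|\cdot|_p=1$, or equivalently via $F'(b_1)^2=4AF(b_1)$ when the discriminant vanishes) makes rigorous what the paper leaves to the reader.
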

The third condition in~\eqref{eq:condizioni1-2} implies that $|b_1|_p\leq\max( |B/A|_p,|C/A|^{1/2}_p)$, since, otherwise, both the left-hand side and right-hand side of the inequality would be equal to $|A|^2_p|b_1|_p^2$. We remark that, by Proposition~\ref{prop:02-Reducibility}, conditions \eqref{eq:condizioni1-2} can be satisfied even when the polynomial $F$ (and therefore the polynomial associated to the purely periodic part of the corresponding continued fraction) is reducible. % To see this last inequality, notice that, if $|b_1|_p>\max(|A|_p,|B|^{1/2}_p)$, then both the left-hand side and the right-hand side are equal to $|b_1|^2$. \\
\begin{corollary}
\label{cor:Vcon12}
    $V(F)^{\mathrm{con}}_{1,2}$ is finite.
\end{corollary}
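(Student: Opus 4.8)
The plan is to finiteness of $V(F)^{\mathrm{con}}_{1,2}$ by reducing to the finiteness of the $\calo$-points on the non-linear components of $V(F)_{1,2}$, which were already identified in the case analysis preceding the statement. The convergent locus is always contained in $V(F)_{1,2}(\calo)$, and the line $R_1$ (corresponding to $a_1=a_2=0$) never contributes convergent points because $[\overline{0,0}]$ cannot converge by Theorem~\ref{criterio.di.convergenza.padica} (condition (i) fails). So it suffices to show that the remaining components carry only finitely many $\calo$-points, or at least only finitely many convergent ones.

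First I would split into the three cases of the preceding analysis. If $A=0$, then $V(F)^{\mathrm{con}}_{1,2}=\varnothing$ as already noted, so there is nothing to prove. If $A\neq 0$ and $B^2-4AC\neq 0$, I invoke Remark~\ref{rem:siegel12}: the unique non-linear component is an affine curve of genus $0$ with three points at infinity, so Siegel's theorem gives finitely many $\calo$-integral points, and \emph{a fortiori} finitely many convergent ones. The only case requiring real work is $A\neq 0$ and $B^2-4AC=0$, where the non-linear pieces are the \emph{rational} curves $C_1$ and $C_2$, which are parametrized by $u\in\ZZ$ (and $x\in\{2,4\}$) and hence can contain infinitely many $\calo$-points. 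Here Siegel does not apply, and finiteness must come from the convergence condition rather than from integrality alone.

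The key step is therefore to impose the $p$-adic convergence criterion on the explicit points of $C_1$ and $C_2$ and show it excludes all but finitely many of them. On $C_1$ one has $a_1a_2=\mp 4$, and on $C_2$ one has $a_1a_2=\mp 4$ as well (since $a_1a_2=-4$ defines these curves in the $B^2=4AC$ case). By the criterion for type $(0,2)$ recalled just before Theorem~\ref{teo:cond1-2}, convergence of $[\overline{a_1,a_2}]$ requires $|a_1a_2|_p>1$; but $|{\mp}4|_p=1$ since $p$ is odd, so $|a_1a_2|_p=1\not>1$. Hence \emph{no} point of $C_1$ or $C_2$ is convergent, and $V(F)^{\mathrm{con}}_{1,2}$ meets these components in the empty set.

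Putting the cases together, the convergent locus is empty when $A=0$ or when $B^2-4AC=0$, and finite (by Siegel) when $A(B^2-4AC)\neq 0$; in every case $V(F)^{\mathrm{con}}_{1,2}$ is finite. The main obstacle is genuinely the $B^2-4AC=0$ case, where the relevant curves are rational and carry infinitely many integral points; the resolution is the observation that convergence forces $|a_1a_2|_p>1$ while these curves satisfy $a_1a_2=\pm 4$, which has $p$-adic absolute value $1$. I expect the only subtlety to be checking that the two points at infinity (resp. the structure of $C_2$ for $x\in\{2,4\}$) do not secretly contribute, but since convergence is ruled out uniformly by the value $a_1a_2=\pm4$, this does not affect the conclusion.
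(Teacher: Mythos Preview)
Your argument is essentially correct and matches the paper's approach: reduce to the case $A(B^2-4AC)\neq 0$ (where Siegel applies via Remark~\ref{rem:siegel12}) by showing that the other components contribute no convergent points. The paper's proof is terser, simply asserting that ``Theorem~\ref{teo:cond1-2} and the discussion at the beginning of this section show that convergent points exist only when $A(B^2-4AC)\neq 0$''; you have unpacked what this actually means in the $B^2-4AC=0$ case.

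There is one small omission. In the case $A\neq 0$, $B^2-4AC=0$, the components besides $R_1$ are not only $C_1$ and $C_2$: when $B/2A\in\calo$ there is also the line $R_2$ defined by $b_1=-B/(2A)$, $a_2=0$, on which $a_1$ is free. You handled $C_1,C_2$ via $a_1a_2=-4$ (note: the product is always $-4$, not $\mp 4$), but you should also rule out $R_2$. This is immediate: on $R_2$ one has $a_2=0$, hence $|a_1a_2|_p=0\not>1$, so no point of $R_2$ is convergent either. With this sentence added, your proof is complete.
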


\begin{proof}
    Theorem~\ref{teo:cond1-2} and the discussion at the beginning of this section show that convergent points exist only when $A(B^2-4AC)\neq 0$. The thesis then follows from Remark~\ref{rem:siegel12}.
\end{proof}
Finally, we remark that the involution defined in~\eqref{eqn:inv} preserves convergence. Namely, if conditions \eqref{eq:condizioni1-2} are satisfied for $b_1$, then they are also satisfied for $-b_1-B/A$, so that 
\begin{align}
\label{eqn:CF12}
\left(b_1, -\frac{F'(b_1)} {F(b_1)} ,  \frac {F'(b_1)} {A}\right ),\left(-b_1-B/A,  \frac{F'(b_1)} {F(b_1)} , - \frac {F'(b_1)} {A}\right )\in V(F)_{1,2}^{\rm{con}}. 
\end{align}
It is easy to see that the two corresponding continued fractions converge respectively to the two different roots of $F$.
 \begin{examples}\label{examples12} \ 
\begin{itemize}
    \item[a)] 
Assume that $F(x)=x^2+C$ with $C= -b_1^2\pm 2^ip^k$ where $b_1\in\calo$, $i\in \{0,1\}$ and $k> 2v_p(b_1)$. Then, conditions \eqref{eq:condizioni1-2} are satisfied and, by \eqref{eqn:CF12},
\begin{equation*}
    \left(b_1,-\frac{2b_1}{\pm 2^ip^k},2b_1 \right) \in V(F)^{\rm con}_{1,2}.
\end{equation*}
We point out that, in the very special case when $b_1 \in \ZZ$ with $1\le |2b_1|_{\infty}\le \frac{p-1}{2}$ and $C=-b_1^2+p$, the expansion
$$  \left(b_1,-\frac{2b_1}{p},2b_1 \right) \in V(F)^{\rm con}_{1,2} $$
is the one given by the Browkin II algorithm \cite{Browkin2000}, as shown by \cite[Thm.\,8]{BarberoCerrutiMurru2021}.

%\item[b)] \notalea{Qui metterei l'esempio sotto un po' riscritto} Conversely, if ($B$ is integer and) $V(0,B)_{1,2}^\mathrm{conv}$ is non empty, then there exists $b_1 \in \mathbb{Z}\left[\frac{1}{p}\right]$ such that  $B= -b_1^2\pm 2^ip^k$ where $i\in 0,1$ and $k> 2v_p(b_1)$. \notalau{Se $B$ \`e intero, $b_1\in \ZZ$, ma perch\'e non \`e possibile che si abbia $q \neq p$ un primo con $q | B+b_1^2$ e $q | b_1$? }
 \item[b)] Assume now that $F(x)=x^2+C$ with $C\in\ZZ$ such that $p\nmid C$. Then we see that $V(F)^{\rm con}_{1,2}\not=\varnothing$ if and only if there exist $b\in\ZZ$ such that $|b|_p=1$ and $c\in\ZZ$ a divisor of $\gcd(C,b)$ such that $b^2+C=2^icp^k$ with $k\geq 1$ and $i\in\{0,1\}$. In this case, \begin{equation*}
    \left(b,-\frac{2b}{ 2^icp^k},2b\right)\in V(F)^{\rm con}_{1,2}.
\end{equation*}
Notice that this implies
\begin{equation*}
    \left(\frac b c,-\frac{2b}{ 2^ip^k},2\frac b c\right)\in V\left (G\right )^{\rm con}_{1,2},
\end{equation*}
where $G(x)=x^2+\frac{C}{c^2}$.
 \item[c)] As an example of polynomial $F$ having roots in $\QQ_p$ and such that $V(F)_{1,2}^{\rm con}$ is empty, we notice that $V(F)_{1,2}^{\rm con}=\varnothing$ with $F(x)=x^2+5$ for every $p\equiv 1\pmod {35}$. 
 We need to show that $b^2=2^i c p^r+5$ has no solutions with $b\in\ZZ$, $i\in\{0,1\}$, $k\geq 1$ and $c\in\{1,5\}$. If $c=5$, this gives $\frac {b^2} 5= 2^ip^k+1$, that is $0\equiv 2^i+1\equiv \pm 2\pmod 5$, a contradiction. If $c=1$ and $i=0$, this gives a contradiction modulo~7. If $c=1$ and $i=1$, this gives a contradiction modulo 5. This example can be generalized to any polynomial $F(x)=x^2+C$ with $C$ prime such that none of $C+1,C+2$ is a square and $p$ is in some arithmetic progression.
 \item[d)] It is possible to find a polynomial $F$ such that $\#V(F)_{1,2}^{\rm con}>2$. Indeed, we have that
  $[7,\overline{\frac{2}{27},2}]$, $[5,\overline{-\frac{2}{27},-2}]$ and $[11,\overline{\frac{10}{3},10}]$, $[1,\overline{-\frac{10}{3},-10}]$ all converge to $3$-adic solutions of $F(x)=x^2-12x+8$, i.e.\ $6\pm 2\sqrt{7}$.
  \item[e)] For a polynomial $F(x)=x^2+C$, through the identity
\[
2p^{r+s}-\left(\frac{2p^r+p^s-1}{2}\right)^2=2p^{r}-\left(\frac{2p^r-p^s+1}{2}\right)^2
\]
and point (a) one can find examples of $C$ such that $|V(F)_{1,2}^{\rm con}|>2$.
  For example, $[10,\overline{-\frac{10}{27},20}]$, $[8,\overline{-\frac{8}{9},16}]$ and $[-10,\overline{\frac{10}{27},-20}]$, $[-8,\overline{\frac{8}{9},-16}]$ all converge $3$-adically to a square root of $46$.
\end{itemize}
\end{examples}

\subsection{Type \texorpdfstring{$(0,3)$}{(0,3)}}
\label{sect:03}
The equations of $V(F)_{0,3}$ are
\begin{equation}\label{eq:0-3}\begin{cases}  Aa_1a_2a_3 + Ba_2a_3 + Aa_1 - Aa_2 + Aa_3 + B  &=0,\\  Aa_1a_2 + Ca_2a_3 + A + C&=0,\\
Ca_1a_2a_3 - Ba_1a_2 + Ca_1 - Ca_2 + Ca_3 - B&=0.
\end{cases}
\end{equation} 
Eliminating $a_2$ from the equations we get
\begin{equation}
    a_2=-\frac{A+C}{Aa_1+Ca_3}
\end{equation}
and
\begin{equation}
\label{eqn:elimin03}
    Aa_1^2+Ca_3^2+Ba_1-Ba_3+A+C=0.
\end{equation}
\begin{remark}
The discriminant of~\eqref{eqn:elimin03}, seen as a quadratic polynomial in $a_1$, is
   \[\Delta(a_3^2+1)-(Ba_3-2A)^2.
   \]
   We deduce from this that if $\Delta<0$ then $V(F)_{0,3}(\RR)=\varnothing$.
\end{remark}

We study $V(F)_{0,3}(\calo)$ by considering different cases.
\begin{itemize}
\item If $A=0$ and $B=0$, we have that $V (F )_{0,3}(\mathcal O)= \varnothing$; indeed, combining the second and third equation of \eqref{eq:0-3} we get $a_2^2=-1$, which is impossible in $\mathcal O$;
\item similarly, if $B=C=0$ then $V (F )_{0,3}(\mathcal O)= \varnothing$;
\item if $A=C=0$, the first equation of \eqref{eq:0-3} implies that $a_2a_3=-1$ and the third equation implies that $a_1a_2=-1$; therefore
$$ V(F)_{0,3}(\mathcal O)=\{ (a, -1/a, a)\ |\ a \in \mathcal O^*\}. $$

None of these expansions are convergent. Indeed, condition (ii) of Theorem~\ref{criterio.di.convergenza.padica} is never satisfied because, for $j=1$ it implies that $v_p(-1/a)>0$, but for $j=3$ it would also imply that $v_p(a)>0$.

\item if $A=0$ and $B, C \neq 0$, from \eqref{eq:0-3} we have 
\begin{equation}\label{eq:0-3_bis}\begin{cases}
a_2a_3&=-1, \\
a_1&=-\frac{C}{B}(a_3^2+1)+a_3,
\end{cases}
\end{equation}
which implies in particular that $\frac{C}{B}(a_3^2+1) \in \mathcal O$ and $a_3 \in \mathcal O^*$; we write $a_3=\pm p^\ell$ for some $\ell\in\ZZ$. We can assume $B$ and $C$ coprime, which implies that $B$ divides $(a_3^2+1)$ in $\mathcal O$. If we write $B=p^k B_1$ with $p \nmid B_1$ and we consider the order of $p$ in $(\ZZ/B_1 \ZZ)^*$, the last condition can be satisfied if and only if such order is a multiple of $4$, say $4s$. In this case, $V(F)_{0,3}(\mathcal O)$ contains infinitely many points of the form 
\[\left (-C/B(p^{2s(1+4t)}+1)\pm p^{s(1+4t)}, \mp p^{-s(1+4t)}, \pm p^{s(1+4t)} \right ) \mbox{ with $t \in \ZZ$}.\]

Studying convergence through Theorem~\ref{criterio.di.convergenza.padica} we see that 
$$V(F)_{0,3}^{\mathrm{con}}=\{(a_1,a_2,a_3)\in V(F)_{0,3}(\mathcal O) \mid v_p(a_3)<0 \text{ and }a_3\neq B/C\}.$$
In fact, using \eqref{eq:0-3_bis} to express $a_1,a_2$ in terms of $a_3$, we see that condition (ii) of Theorem~\ref{criterio.di.convergenza.padica} with $j=1$ implies that $v_p(a_3)<0$, which in turn implies that condition (i) is satisfied; condition (ii) with $j=2$ implies that $a_3\neq B/C$ (since otherwise we should have $v_p(a_3)>0$), and condition (ii) with $j=3$ is always satisfied because $C\neq 0$.

%OSSERVAZIONE, se A=0,B=1,C=p la frazione continua (-p,-p,1/p) non converge anche se v(a_3)<0 perché a_3=B/C

%$$V(F)_{0,3}^{\mathrm{con}}=\{(-\frac{C}{B}(a_3^2+1)+a_3,-\frac{1}{a_3},a_3)\in V(F)_{0,3} \mid v_p(a_3)<0 \text{ and }a_3\neq B/C\}$$

\item if $C=0,A,B\neq 0$, we recall that, by Proposition \ref{prop:scambio}, we have that $V(Ax^2+Bx)_{0,3}$ is isomorphic to $V(-Bx+A)_{0,3}$ through the isomorphism $(a_1,a_2,a_3)\mapsto (a_3,a_2,a_1)$. Using the previous case, we have that $V (F )_{0,3}(\mathcal O)$ is nonempty if and only if the order of $p$ in $(\ZZ/B_1 \ZZ)^*$ is a multiple of $4$, where $B=p^k B_1$ and $p \nmid B_1$; we denote it by $4s$. In this case, $V(F)_{0,3}(\mathcal O)$ contains infinitely many points of the form 
\[\left (   \pm p^{s(1+4t)}      , \mp p^{-s(1+4t)}, A/B(p^{2s(1+4t)}+1)\pm p^{s(1+4t)} \right ) \mbox{ with $t \in \ZZ$}.\]
We first recall that, as seen in Remark \ref{rmk:convnotpreserved}, the previous isomorphism does not preserve the convergence, which has to be studied separately. 
From \eqref{eq:0-3} we have 
\begin{equation}\label{eq:0-3_bis2}\begin{cases}
a_1a_2&=-1, \\
a_3&=\frac{A}{B}(a_1^2+1)+a_1.
\end{cases}
\end{equation}
Using Theorem~\ref{criterio.di.convergenza.padica} we see that 
$$V(F)_{0,3}^{\mathrm{con}}=\{(a_1,a_2,a_3)\in V(F)_{0,3}(\mathcal O) \mid v_p(a_1)>0 \text{ and }a_1\neq -B/A\}.$$
In fact, using \eqref{eq:0-3_bis2} to express $a_2,a_3$ in terms of $a_1$, we see that condition (ii) of Theorem~\ref{criterio.di.convergenza.padica} with $j=1$ is always satisfied because $A\neq 0$; condition (ii) with $j=3$ implies that $v_p(a_1)>0$, 
 which in turn implies that condition (i) is satisfied; finally, condition (ii) with $j=2$ implies that $a_1\neq -B/A$ (since otherwise we should have $v_p(a_3)>0$ and so $v_p(a_1)<0$).

%which is analogous to the previous case. If we assume $A$ and $B$ coprime, write $B=p^k B_1$ with $p \nmid B_1$ and we consider the order of $p$ in $(\ZZ/B_1 \ZZ)^*$, the set $V (F )_{0,3}(\mathcal O)$ is nonempty if and only if this order is a multiple of $4$; we denote it by $4s$. In this case, $V(F)_{0,3}(\mathcal O)$ contains infinitely many points of the form 
%$$\left (   \pm p^{s(1+4t)}      , \mp p^{-s(1+4t)}, A/B(p^{2s(1+4t)}+1)\pm p^{s(1+4t)} \right ) \mbox{ with $t \in \ZZ$}.$$
%\textcolor{red}{Osservazione: Si potrebbe dimostrare in generale, che la varietà $V(AX^2+BX+C)_{0,k}$ è isomorfa a $V(CX^2-BX+A)_{0,k}$ tramite l'isomorfismo $(a_1,\dotsc,a_k)\mapsto (a_k,\dotsc,a_1)$. Da indagare cosa succede al luogo convergente.}
\end{itemize}

%\textcolor{red}{Continuare da qui. Riusciamo a dire che V(F) è sempre non vuoto se A,B;C non sono zero?}

In the general case, describing $V(F)_{0,3}$ in terms of $A,B,C$---as we did, for example, for type $(0,2)$---turns out to be too involved. We will only focus on the significant case of pure radicals, namely $A=1, B=0$ and $C=-d$ for some integer $d$.

%Assume that
%\[x=[\overline{a_1,a_2,a_3}].\]
%Then
%\[x=a_1+\cfrac{1}{a_2+\cfrac{1}{a_3+\cfrac{1}{x}}}=a_1+\cfrac{1}{a_2+\frac{x}{a_3 x +1}}=a_1+\frac{a_3 x +1}{a_2 a_3 x+a_2+x},\]
%so that
%\begin{align*}
%(x-a_1)(a_2 a_3 x+a_2+x)&=a_3 x +1\\
%a_2 a_3 x^2 +a_2 x +x^2 -a_1 a_2 a_3 x -a_1 a_2 -a_1 x & = a_3 x +1
%\end{align*}
%and we get 
%\begin{equation}\label{EQ-quadratica-03}
%(a_2 a_3 +1) x^2 + (a_2 -a_1 -a_3 - a_1 a_2 a_3)x -a_1 a_2 -1 =0.
%\end{equation}

\subsubsection{Pure integer radicals}\label{sssect:pureintegerradicals}
Equation~\eqref{eq:0-3}, with $A=1,B=0,C=-d$ gives the system of equations
\begin{equation}\label{eq:0-3tracciazero}
  \left\{\begin{array}{@{}l@{}}
    a_2-a_1-a_3-a_1a_2a_3=0,\\
    d(a_2a_3+1)=a_1a_2+1.
  \end{array}\right. 
\end{equation}

\begin{proposition}\label{prop:soluzioni0-3}
    Assume that $d\neq 0,1$.
    The variety $V(F)_{0,3}$ is an irreducible smooth rational curve in $\AA^3$ with three points at infinity, and the set $V(F)_{0,3}(\calo)$ is at most finite. 
    
    All converging $p$-adic expansions of $\sqrt{d}$ of type $(0,3)$ are of the form
\[
\left[\overline{a_1,\frac{v}{p^s},a_3}\right],
\]
where  $a_1,a_3,v,s\in\ZZ$, $s\geq 1$, $p\nmid a_1a_3v$, $v \mid d-1$  and
\begin{align}
& a_1^2-da_3^2 =d-1,  \label{eqn:pell} \\
& a_1-da_3 = \frac {d-1} v p^s.\label{eq:diciannove}
\end{align}
Furthermore, every such quadruple $a_1,a_3,v,s$ gives rise to a converging $p$-adic expansion of $\sqrt{d}$ of type $(0,3)$ for some $d \in \ZZ$.
\end{proposition}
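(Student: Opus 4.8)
The plan is to prove the three assertions in turn: the geometry of $V(F)_{0,3}$, the finiteness of $V(F)_{0,3}(\calo)$, and the arithmetic characterization of the convergent expansions.

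\emph{Geometry and finiteness.} As already observed, with $A=1,B=0,C=-d$ the third equation of \eqref{eq:0-3} is $-d$ times the first, so $V(F)_{0,3}$ is cut out by the two equations \eqref{eq:0-3tracciazero}. Eliminating $a_2$ gives, as in \eqref{eqn:elimin03}, the conic $\mathcal{C}\colon a_1^2-da_3^2=d-1$ in the $(a_1,a_3)$-plane, while the second equation of \eqref{eq:0-3tracciazero} reads $a_2(a_1-da_3)=d-1$, i.e.\ $a_2=(d-1)/(a_1-da_3)$. For $d\neq 0,1$ the quadratic form $a_1^2-da_3^2-(d-1)w^2$ is nondegenerate, so $\mathcal{C}$ is a smooth, hence geometrically rational, conic. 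The projection $(a_1,a_2,a_3)\mapsto(a_1,a_3)$ is an isomorphism of $V(F)_{0,3}$ onto $\mathcal{C}$ minus the two points where $a_1=da_3$ (namely $(\pm\sqrt d,\pm1/\sqrt d)$), with regular inverse $(a_1,a_3)\mapsto\bigl(a_1,(d-1)/(a_1-da_3),a_3\bigr)$; thus $V(F)_{0,3}$ is smooth, irreducible and rational. Taking the closure in $\mathbb{P}^3$ yields three points at infinity: the two images of the points at infinity of $\mathcal{C}$ (where $a_1,a_3\to\infty$ while $a_2\to 0$) and the node $[0:1:0:0]$, which is the common image of the two branches where $a_2\to\infty$. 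On the normalization $\cong\mathbb{P}^1$ these correspond to four punctures, so $V(F)_{0,3}$ is a genus-$0$ curve with at least three geometric points at infinity, and Siegel's theorem (as invoked in Remark~\ref{rem:siegel12}, \cite[Rem.\,D.9.2.2]{hindrySilverman2000}) gives that $V(F)_{0,3}(\calo)$ is finite.

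\emph{The convergence conditions.} For the purely periodic $\mathcal{P}=[\overline{a_1,a_2,a_3}]$ we have $E=M_3$, and $\mathrm{Tr}(M_3)=A_3+B_2=a_1a_2a_3+a_1+a_2+a_3$, which on $V(F)_{0,3}$ equals $2a_2$ by the first equation of \eqref{eq:0-3tracciazero}. Hence condition (i) of Theorem~\ref{criterio.di.convergenza.padica} reads $|2a_2|_p>1$, i.e.\ $v_p(a_2)<0$ (recall $p$ is odd); writing $a_2=v/p^s$ in lowest terms gives $v\in\ZZ$, $p\nmid v$ and $s\geq 1$. The conic is exactly \eqref{eqn:pell}, and $a_2(a_1-da_3)=d-1$ rewrites as $a_1-da_3=\frac{d-1}{v}p^s$, which is \eqref{eq:diciannove}.

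\emph{Integrality (the technical heart).} It remains to show $a_1,a_3\in\ZZ$ with $p\nmid a_1a_3$ and $v\mid d-1$. Here lies the work: a case analysis on $v_p(a_1),v_p(a_3)$ playing \eqref{eqn:pell} against \eqref{eq:diciannove}. The mechanism is that if $a_1$ or $a_3$ had negative valuation then in \eqref{eqn:pell} the terms $a_1^2$ and $da_3^2$ would have to cancel, forcing $a_1\equiv\pm\sqrt d\,a_3$ $p$-adically and hence $v_p(a_1-da_3)<0$, against $v_p(a_1-da_3)=s+v_p(d-1)\geq 1$ from \eqref{eq:diciannove}; the same comparison excludes $p\mid d$. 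Thus $v_p(a_1),v_p(a_3)\geq 0$, and since $\calo\cap\ZZ_p=\ZZ$ we get $a_1,a_3\in\ZZ$; reducing \eqref{eqn:pell} modulo $p$ and using $a_1\equiv da_3\pmod p$ gives $v_p(a_1)=v_p(a_3)=0$. Then $a_1-da_3\in\ZZ$ together with \eqref{eq:diciannove} and $p\nmid v$ yields $v\mid(d-1)p^s$, so $v\mid d-1$. Condition (ii) of Theorem~\ref{criterio.di.convergenza.padica} is automatic, since the relevant $(2,1)$-entries $a_2a_3+1,\,a_1a_3+1,\,a_1a_2+1$ are all nonzero (the first and third because $|a_2|_p>1$, the second because $a_1a_3=-1$ would force $d=1$ in \eqref{eqn:pell}). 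I expect this valuation bookkeeping, with the subcases $p\mid d-1$ and $p\nmid d-1$, to be the main obstacle.

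\emph{Converse.} Conversely, given integers $a_1,a_3,v,s$ with $s\geq 1$, $p\nmid a_1a_3v$, $v\mid d-1$ and satisfying \eqref{eqn:pell}--\eqref{eq:diciannove} for the integer $d=(a_1^2+1)/(a_3^2+1)\notin\{0,1\}$, set $a_2=v/p^s$. Equations \eqref{eqn:pell}--\eqref{eq:diciannove} recover \eqref{eq:0-3tracciazero}, so $(a_1,a_2,a_3)\in V(F)_{0,3}(\calo)$; condition (i) holds because $s\geq 1$ and condition (ii) is vacuous as above, so the expansion converges by Theorem~\ref{criterio.di.convergenza.padica}. Finally \eqref{eq:diciannove} gives $a_1\equiv da_3\pmod{p^s}$ with $p\nmid a_1a_3$, whence $p\nmid d$ and $d$ is a nonzero square modulo $p$ (equal to $1$ when $p\mid d-1$, and to $(1/a_3)^2$ otherwise, via \eqref{eqn:pell}); by Hensel's lemma $d$ is a square in $\QQ_p$, so the limit is a genuine $\sqrt d\in\QQ_p$.
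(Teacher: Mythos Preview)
Your argument is correct in outline and follows the same architecture as the paper: eliminate $a_2$ to get the Pell conic, invoke Siegel for finiteness, use condition~(i) of Theorem~\ref{criterio.di.convergenza.padica} (which on $V(F)_{0,3}$ reads $v_p(2a_2)<0$) to force $s\geq 1$, then bootstrap to $a_1,a_3\in\ZZ$ with $p\nmid a_1a_3$, and finally check condition~(ii) is vacuous for the converse.

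The one place where your route differs from the paper is the integrality step, and your version is both harder and slightly imprecise. You argue via $v_p(a_1-da_3)$: but the claim ``$a_1\equiv\pm\sqrt d\,a_3$ $p$-adically forces $v_p(a_1-da_3)<0$'' is not literally true when $v_p(d-1)$ is large compared to $-v_p(a_3)$, which is exactly the subcase you flag as the obstacle. The paper sidesteps this entirely by using the \emph{other} expression for $a_2$, namely
\[
a_2=\frac{a_1+a_3}{1-a_1a_3},
\]
which follows from the first equation of \eqref{eq:0-3tracciazero}. If $v_p(a_1)<0$ or $v_p(a_3)<0$ then (via \eqref{eqn:pell}) both are negative, $v_p(1-a_1a_3)=v_p(a_1a_3)<v_p(a_1+a_3)$, and hence $v_p(a_2)>0$, contradicting~(i). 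The same formula dispatches the case $p\mid a_1$ or $p\mid a_3$ in one line: then $v_p(a_1+a_3)\geq 0=v_p(1-a_1a_3)$, so $v_p(a_2)\geq 0$. No case split on $p\mid d-1$ is needed, and the identity $(a_1+a_3)(a_1-da_3)=(d-1)(1-a_1a_3)$ (which would repair your argument) never has to be written down. Your extra check that $d\in\QQ_p^{\times 2}$ is harmless but redundant: once the PCF converges in $\QQ_p$ the limit already satisfies $x^2=d$.
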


\begin{proof}
Eliminating $a_2$ from \eqref{eq:0-3tracciazero}, we get \eqref{eqn:pell}.
The affine curve $V(F)_{0,3}$  defined by \eqref{eq:0-3tracciazero} is irreducible for $d\neq 0,1$. Embedding $\AA^3$ in $\PP^3$ with coordinates $(a_1:a_2:a_3:z)$ and assuming $d\neq 0,1$ we see (using also equation~\eqref{eqn:pell}) that the projective completion of $V(F)_{0,3}$ has the three points at infinity $(0:1:0:0),(\pm\sqrt{d}:0:1:0)$.

Therefore, Siegel's theorem on integral points~\cite[Rem.\,D.9.2.2]{hindrySilverman2000} guarantees that $V(F)_{0,3}(\calo)$ is at most finite.

Assume now that $(a_1,a_2,a_3)\in V(F)_{0,3}^{\mathrm{con}}(\calo)$.
From \eqref{eqn:pell} we see
\begin{equation*}
    (a_1+a_3)(a_1-da_3)=(d-1)(1-a_1a_3).
\end{equation*}
Moreover, $1-a_1a_3$ and $a_1-da_3$ are non-zero---otherwise the previous equation would imply $a_1=\pm i\not\in \calo$ or $(a_1,a_3)=(\pm \sqrt{d},\pm 1/\sqrt{d})$, which is incompatible with the second equation of~\eqref{eq:0-3tracciazero}.

From \eqref{eq:0-3tracciazero} we can then write
\begin{equation}
\label{eqn:a2Type(0,3)}
    a_2=\frac {a_1+a_3}{1-a_1a_3}= \frac {d-1} {a_1-da_3}.
\end{equation}
The quantity $a_1a_2a_3+a_1+a_2+a_3$ is equal to $2a_2$, which is also equal to $2\frac{a_1+a_3}{1-a_1a_3}$; the $p$-adic convergence of $[\overline{a_1,a_2,a_3}]$ implies by Theorem~\ref{criterio.di.convergenza.padica} that $v_p(a_2)<0$.
If one between $a_1$ or $a_3$ had negative $p$-adic valuation, so would the other by \eqref{eqn:pell} and the two valuations would be equal (we recall that $p\neq 2$); but then $v_p(a_2)=v_p(\frac{a_1+a_3}{1-a_1a_3})>0$ against our assumption. Therefore, $v_p(a_1),v_p(a_3)\geq 0$, i.e. $a_1,a_3\in\ZZ$.

If at least one between $v_p(a_1),v_p(a_3)$ were positive, then $v_p(\frac{a_1+a_3}{1-a_1a_3})\geq0$ which is not the case; therefore $v_p(a_1)=v_p(a_3)=0$.

Finally, since $a_2\in\calo$ we must have $a_1-da_3=wp^t$ for some integer $w$ dividing $d-1$ and prime to $p$, and $t>v_p(d-1)$. Let  $v$ be the  prime-to-$p$ part of $\frac{d-1} w$, and $s=t-v_p(d-1)$. Then, we get $a_2=\frac v {p^s}$ and this proves the first part of the statement.

Now take a quadruple $(a_1,a_3,v,s)$ satisfying the hypotheses and define $a_2=v/p^s$.
Notice that, for such a quadruple, $a_1 a_3\neq -1$, because $a_1,a_3$ would need to be equal to $\pm 1$, which is not compatible with \eqref{eqn:pell}.

Part (i) of Theorem~\ref{criterio.di.convergenza.padica} is verified. Part (ii) with $j=1$ or $j=3$ is always satisfied because $v_p(a_1 a_2),v_p(a_2 a_3)<0$. Part (ii) with $j=2$ is also satisfied because $a_1 a_3+1\neq 0$. This proves the theorem.
\end{proof}
The above result is in agreement with the sufficient condition given in~\cite[Cor.\,10]{Murru2023}.
\begin{remark}
\label{rem:empty03}
From \eqref{eq:0-3tracciazero} it is easy to see that, if $d=0$, the variety $V(F)_{0,3}$ is the union of lines of the form $(i,i,a_3)$ and $(-i,-i,a_3)$, where $i^2=-1$, so it is reducible over $\overline{\QQ}$. Notice that for $d=0$ the set $V(F)_{0,3}(\mathcal O)$ is empty.

For $d=1$, the variety $V(F)_{0,3}$ is the union of lines of the form $(a_1,0,-a_1)$ and of the curve parametrized by $(a_1,\frac{2a_1}{1-a_1^2},a_1)$, so it is reducible over $\QQ$. In this case the lines and the curve intersect in $(0,0,0)$ which is a singular point of $V(F)_{0,3}$ and $V(F)_{0,3}(\mathcal O)\neq \varnothing$.

Finally, for $d<0$, equation~\eqref{eqn:pell} shows that  $V(F)_{0,3}(\calo)$ is empty.
\end{remark}

\begin{corollary}
\label{thm:dCondition}
Suppose that $d\in\ZZ$ is divided by $4$ or by any prime congruent to $-1$ modulo $4$ or $d\leq 0$. Then, for every prime $p$,  $\sqrt{d}$ does not have a $p$-adic expansion of type $(0,3)$.
\end{corollary}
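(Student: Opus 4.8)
The plan is to reduce the whole question to the solvability of the Pell-type equation \eqref{eqn:pell} and then rule it out by elementary congruence arguments. By Proposition~\ref{prop:soluzioni0-3}, any $p$-adic expansion of $\sqrt{d}$ of type $(0,3)$ forces the existence of integers $a_1,a_3$ with $a_1^2-da_3^2=d-1$; hence it suffices to show that \eqref{eqn:pell} has no integer solutions under each of the three hypotheses on $d$. Since the three hypotheses are alternatives, I would treat them separately and show each is already incompatible with \eqref{eqn:pell}.

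For $d\le 0$ there is nothing new beyond what is recorded in Remark~\ref{rem:empty03}: if $d<0$ the left-hand side $a_1^2-da_3^2=a_1^2+|d|a_3^2$ is nonnegative while the right-hand side $d-1$ is negative, and if $d=0$ the equation reads $a_1^2=-1$; in both subcases there is no integer solution.

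For the two divisibility hypotheses I would reduce \eqref{eqn:pell} modulo a suitable integer so that the term $da_3^2$ vanishes. If $4\mid d$, reducing modulo $4$ gives $a_1^2\equiv d-1\equiv 3\pmod 4$, which is impossible because every square is congruent to $0$ or $1$ modulo $4$. If instead some prime $q\equiv -1\pmod 4$ divides $d$, reducing modulo $q$ gives $a_1^2\equiv d-1\equiv -1\pmod q$; since $q\equiv 3\pmod 4$, Euler's criterion yields $\left(\frac{-1}{q}\right)=(-1)^{(q-1)/2}=-1$, so $-1$ is a quadratic nonresidue modulo $q$ and the congruence has no solution. Either way \eqref{eqn:pell} is unsolvable, so no expansion of the required form exists.

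There is no genuine obstacle here: the entire content is carried by Proposition~\ref{prop:soluzioni0-3}, which converts the statement about $p$-adic convergence into the Diophantine condition \eqref{eqn:pell}, after which the three cases are dispatched by the standard facts that $3$ is not a square modulo $4$ and that $-1$ is not a square modulo a prime $\equiv 3\pmod 4$. The only point worth stating carefully is that the conclusion is uniform in $p$: the obstruction lives entirely in \eqref{eqn:pell}, which does not involve $p$ at all, so the non-existence holds simultaneously for every prime $p$.
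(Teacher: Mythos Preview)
Your proof is correct and follows essentially the same approach as the paper: reduce to the Pell-type equation \eqref{eqn:pell} via Proposition~\ref{prop:soluzioni0-3}, then rule out solutions by reducing modulo $4$ (respectively modulo a prime $q\equiv -1\pmod 4$ dividing $d$), with the case $d\le 0$ handled via Remark~\ref{rem:empty03}. You simply spell out the standard facts (squares modulo $4$, Euler's criterion) that the paper leaves implicit.
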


% \begin{equation}
%      \label{eqn:pell0}
%      d=\frac{a_1^2+1}{a_3^2+1}. \end{equation}

\begin{proof}
If $\sqrt{d}$ has a $p$-adic expansion of type $(0,3)$, then equation~\eqref{eqn:pell} modulo $4$ (resp.\ any prime divisor $q$ of $d$) implies that $d\not \equiv 0 \pmod 4$ (resp.\ $q\not \equiv -1 \pmod 4$).
\end{proof}

\begin{remark}
Notice that the converging $p$-adic expansions of $\sqrt{d}$ of type $(0,3)$ given by Proposition \ref{prop:soluzioni0-3} have $a_1, a_3 \in \ZZ$, so they are not expansions of Browkin or Ruban type (where the partial quotients are elements of $\calo$ with negative $p$-adic valuation). More specifically, these expansions do not arise from any algorithm involving the use of a single `floor function' (for the formal definition of floor functions see \cite{CapuanoMurruTerracini2022}). In particular, this is again consistent with Bedocchi's result on Browkin continued fractions already discussed in Remarks~\ref{rem:bedocchi01} and~\ref{rem:bedocchi02}.
%, stating that the Browkin continued fraction expansion of an irrational square root of an integer, if periodic, must have preperiod of length $2$ or $3$ \cite[§3]{Bedocchi1988}. 
\end{remark}

\begin{proposition}
\label{prop:francescoBound}
{Let $d$ be a positive square-free integer, and suppose that $\sqrt{d}$ has a converging $p$-adic expansion
\(
\left[\overline{a_1,\frac{v}{p^s},a_3}\right]
\) as in Proposition~\ref{prop:soluzioni0-3}.} Let $k$ be the greatest common divisor of $a_1+a_3$ and $1-a_1a_3$. Then \begin{equation}\label{eqn:FrancescoBound}p^s\leq  k\frac{ (d+1)^{3/2}+2d}{\left| d+1 \right|^2_p} .\end{equation}
\end{proposition}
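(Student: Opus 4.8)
The plan is to translate the geometric finiteness of Proposition~\ref{prop:soluzioni0-3} into an explicit elementary inequality, by first reading off $p^s$ as the reduced denominator of $a_2$ and then controlling it through two polynomial relations in the data $(d,k,\ell,v,s)$. First I would fix the notation of Proposition~\ref{prop:soluzioni0-3}: here $a_1,a_3\in\ZZ$ with $p\nmid a_1a_3$, the Pell relation \eqref{eqn:pell} reads $a_1^2-da_3^2=d-1$, and by \eqref{eqn:a2Type(0,3)} we have $a_2=v/p^s=\frac{a_1+a_3}{1-a_1a_3}$. Since $p\nmid v$, the fraction $v/p^s$ is already in lowest terms, so reducing $\frac{a_1+a_3}{1-a_1a_3}$ by $k=\gcd(a_1+a_3,1-a_1a_3)$ and comparing the two reduced fractions gives
\[
|a_1+a_3|=k|v|,\qquad |1-a_1a_3|=k\,p^s .
\]
In particular $p^s=|1-a_1a_3|/k$, so the statement becomes an upper bound for this quantity.

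Next I would extract two relations. From $a_3(a_1+a_3)+(1-a_1a_3)=a_3^2+1$ and the analogous identity with $a_1$ we get $k\mid a_3^2+1$ and $k\mid a_1^2+1$; writing $a_3^2+1=k\ell$ with $\ell\in\ZZ_{>0}$, equation \eqref{eqn:pell} yields $a_1^2+1=d(a_3^2+1)=dk\ell$. Feeding these into the elementary identity $(1-a_1a_3)^2+(a_1+a_3)^2=(a_1^2+1)(a_3^2+1)$ and dividing by $k^2$ produces
\[
p^{2s}+v^2=d\ell^2 .
\]
Expanding $(a_1+a_3)^2=a_1^2+a_3^2+2a_1a_3=(d+1)k\ell-2\epsilon k\,p^s$, where $a_1a_3=1-\epsilon k p^s$ and $\epsilon\in\{\pm1\}$ is the sign of $1-a_1a_3$, and dividing by $k$ gives the linear relation
\[
(d+1)\ell=k v^2+2\epsilon\,p^s .
\]

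Finally I would combine the two. From $p^{2s}=d\ell^2-v^2\le d\ell^2$ we get $p^s\le\sqrt d\,\ell$; substituting $\ell=(kv^2+2\epsilon p^s)/(d+1)$ and collecting the $p^s$-terms gives $(\sqrt d-\epsilon)^2\,p^s\le\sqrt d\,k v^2$, so in the worst case $\epsilon=+1$ we obtain $p^s\le \sqrt d\,k v^2/(\sqrt d-1)^2$. Since $v\mid d-1$ we have $v^2\le(d-1)^2=(\sqrt d-1)^2(\sqrt d+1)^2$, whence
\[
p^s\le \sqrt d\,k(\sqrt d+1)^2=k\big((d+1)\sqrt d+2d\big)<k\big((d+1)^{3/2}+2d\big),
\]
and the claimed bound follows \emph{a fortiori} because $|d+1|_p\le1$ forces $1\le|d+1|_p^{-2}$.

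The main obstacle is arranging the algebra so that the linear relation $(d+1)\ell=kv^2+2\epsilon p^s$ emerges cleanly: it is precisely this equation that converts the a priori unbounded quantity $\ell=(a_3^2+1)/k$ into something governed by $k$, $v$ and $p^s$, and without it one is stuck at $p^s\le\sqrt d\,\ell$ with $\ell$ free. The remaining steps are bookkeeping: tracking the sign $\epsilon$ (the case $\epsilon=-1$ gives the smaller bound $p^s\le\sqrt d\,k(\sqrt d-1)^2$, so it is harmless), verifying $1-a_1a_3\neq0$ and $v\neq0$ so that the reduction of $a_2$ is legitimate (both are ensured in the proof of Proposition~\ref{prop:soluzioni0-3}), and observing that the factor $|d+1|_p^{-2}$ appearing in the statement is in fact not needed, so the inequality produced is slightly stronger than required.
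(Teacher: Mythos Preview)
Your proof is correct and takes a genuinely different, cleaner route than the paper's. The paper eliminates $a_3$ via $1-a_1a_3=kp^s$ to obtain a quartic in $a_1$, forces its discriminant to be a perfect square, and thereby introduces an auxiliary integer $u$ with $(d-1)^2+4d(kp^s-1)^2=u^2$; it then factors $u^2-(d+1)^2=(u+d+1)(u-d-1)=4dkp^s(kp^s-2)$ and tracks how $p^s$ distributes between the two nearly-coprime factors---the extra $|d+1|_p^{-2}$ in the final bound arises precisely from controlling the $p$-part common to $u$ and $d+1$. You instead keep both reduced pieces $|1-a_1a_3|=kp^s$ and $|a_1+a_3|=k|v|$ in play, and the Lagrange-type identity $(1-a_1a_3)^2+(a_1+a_3)^2=(a_1^2+1)(a_3^2+1)$ hands you the pair of relations $p^{2s}+v^2=d\ell^2$ and $(d+1)\ell=kv^2+2\epsilon p^s$ directly, with no discriminant or factoring step. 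This is shorter, treats the sign of $1-a_1a_3$ explicitly (the paper is a little casual on this point, as the worked examples with negative $k$ just after the proof reveal), and yields the strictly sharper inequality $p^s<k\bigl((d+1)^{3/2}+2d\bigr)$ without the denominator $|d+1|_p^2$. On the other hand, the paper's argument never invokes the divisibility $v\mid d-1$ and runs entirely from the Pell relation together with $1-a_1a_3=kp^s$, so it would transfer more readily to situations where the numerator of $a_2$ is not so tightly constrained.
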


\begin{proof}
By~\eqref{eqn:a2Type(0,3)} we have $1-a_1 a_3=k p^s$. This equality can be used to eliminate $a_3$ from~\eqref{eqn:pell}, yielding

\begin{equation}
\label{eqn:quartic_a1}
    a_1^4-(d-1)a_1^2-d(kp^s-1)^2=0.
\end{equation}
Viewing it as a quadratic equation in $a_1^2$, we have that the discriminant must be a square, hence there exists $u \in \ZZ$ such that
\begin{equation}\label{eq:u}
(d-1)^2+4d(kp^s-1)^2=u^2.
\end{equation}
Choose $u$ to be positive (it cannot be $0$ since the left-hand side of~\eqref{eq:u} is always $\geq 1$).
Moreover,
\begin{align}
        4dkp^s(kp^s-2) &= 4d(kp^s-1)^2-4d \nonumber\\
    &=u^2-(d-1)^2-4d \nonumber\\
    &=u^2-(d+1)^2 \nonumber\\
   &=(u+d+1)(u-d-1) \label{Ex4.11Eqn25}.
\end{align}
From this equality we obtain an effective upper bound for $s$ in terms of $p,d$, and $k$.
The idea is that the two factors in \eqref{Ex4.11Eqn25} are almost coprime, so that the factor $p^s$ must go almost entirely in one of them, say for now in the first one. Then we can write
\begin{equation}\label{Ex4.11Eqn26}
  \left\{\begin{array}{@{}l@{}}
    u+d+1=2\dfrac{ag}{p^x} p^s,\\[3ex]
    u-d-1=2\dfrac{d k p^x}{ag}(kp^s-2),
  \end{array}\right.
\end{equation}
where $a$ is a positive divisor of $kd$, $x$ is an integer between 0 and $v_p(\gcd(u,d+1))$ and $g$ is a positive divisor of $kp^s-2$.
These two factors should have roughly the same size. Taking the difference we get
\begin{equation*}
    d+1=\frac{a^2 g^2 - d k^2 p^{2x}}{agp^x}p^s + \frac{2dk p^x}{ag}.
\end{equation*}
The coefficient in front of $p^s$ cannot be 0 because $d$ is not a square, so we obtain 
\begin{equation*}
    p^s=\left(d+1-\frac{2dk p^x}{ag}\right)\frac{agp^x}{a^2 g^2 - dk^2p^{2x}}\leq \left|d+1-\frac{2d kp^x}{ag}\right|agp^x\leq (d+1)agp^x + 2d kp^{2x}.
\end{equation*}
We are left with the task of bounding $g$ independently of $s$.
To do so, take the quotient of the two equations \eqref{Ex4.11Eqn26}
\begin{align}
    \frac{u-d-1}{u+d+1}&=dk\left(\frac{p^x}{ag}\right)^2\left(k-\frac{2}{p^s}\right) \nonumber\\ \label{eqn:ratio}
    \left(\frac{ag}{p^x}\right)^2&=dk\left(k-\frac{2}{p^s}\right)\frac{u+d+1}{u-d-1} < d k^2 \left(1+\frac{2(d+1)}{u-d-1}\right).
\end{align}

The bound in~\eqref{eqn:FrancescoBound} is larger than $\frac{k(d+1)^2}{\sqrt{d}}+1$, therefore we can now assume freely that $p^s\geq \frac{k(d+1)^2}{\sqrt{d}}+1$, so that, from~\eqref{eq:u}, $u>2\sqrt{d}(kp^s-1)\geq 2k^2(d+1)^2$. Therefore we get 
\begin{align*}
\left(\frac{ag}{p^x}\right)^2 & < d k^2 \left(1+\frac{2(d+1)}{u-d-1}\right) < d k^2 \left(1+\frac{2(d+1)}{2k^2(d+1)^2-d-1}\right)\\
&= d k^2 \left(1+\frac{2}{2k^2(d+1)-1}\right)< d k^2 \left(1+\frac{1}{dk^2}\right)= dk^2+1.
\end{align*}
Thus we obtain
\begin{align*}
   p^s&\leq (d+1)agp^x + 2d kp^{2x}=p^{2x}\left( (d+1)\frac{ag}{p^x}+2kd \right)< p^{2x}\left( (d+1)\sqrt{dk^2+1}+2kd \right)\\
    &\leq p^{2x}k\left( (d+1)\sqrt{d+1}+2d \right)\leq  k\frac{ (d+1)^{3/2}+2d}{\left| d+1 \right|^2_p}.
\end{align*}

Arguing with the role of the two factors of \eqref{Ex4.11Eqn26} reversed leads to the same bound.
\end{proof}
Setting $k=1$ in the statement of Proposition~\ref{prop:francescoBound} shows that, for fixed $d$, there is only a finite number of $p$-adic expansions of $\sqrt{d}$ of type $(0,3)$ of the form
\[
\left[\overline{a_1,\frac{a_1+a_3}{p^s},a_3}\right].
\]
 For example, for $k=1$ and $3\leq |d|\leq 10$ one can check by hand---for the primes allowed by Theorem~\ref{thm:dCondition} and equations~\eqref{eqn:FrancescoBound} and~\eqref{eqn:quartic_a1}---that only $\sqrt{10}$ has $p$-adic expansions of type $(0,3)$, which are
\[\left[\overline{ 13, \frac{9}{53}, -4}\right]_{53}\qquad \text{and} \qquad \left[\overline{ -13, -\frac{9}{53}, 4}\right]_{53}.\]
We remark that, for $k\neq 1$, one can find other $p$-adic expansions of $\sqrt{2}, \sqrt{5}$ and $\sqrt{10}$ of type $(0,3)$:
\begin{align*}
     \sqrt{10}&=\left[\overline{7,-\frac{9}{13},2}\right]_{13},  \tag{for $k=-1$}\\
     \sqrt{5}&=\left[\overline{7,\frac{2}{11},-3}\right]_{11}, \tag{for $k=2$}\\
     \sqrt{2}&=\left[\overline{17,\frac{1}{41},-12}\right]_{41}, \tag{for $k=5$}\\
    \sqrt{10}&=\left[\overline{-57,\frac{3}{41},-18}\right]_{41}, \tag{for $k=-25$}\\
    \sqrt{10}&=\left[\overline{253,-\frac{9}{547},80}\right]_{547}, \tag{for $k=-37$}\\
    \sqrt{10}&=\left[\overline{487,\frac{9}{2027},-154}\right]_{2027}, \tag{for $k=37$}\\
    \sqrt{10}&=\left[\overline{2163,-\frac{3}{1559},684}\right]_{1559} .\tag{for $k=-949$}
\end{align*}

\subsubsection{Finiteness results}
\label{sec:finiteness03}
In light of equation~\eqref{eqn:pell} in Proposition~\ref{prop:soluzioni0-3}, the structure of the set $V(F)_{0,3}^{\mathrm{con}}(\calo)$ is related to generalized Pell equations, which are themselves related to linear recurrence sequences. In this section we will recall some known effective bounds on linear recurrence sequences in order to derive the finiteness of some continued fraction expansions of type $(0,3)$ representing roots of integers.

The generalized Pell equation, i.e.\ the equation of the form $x^2-dy^2=n$, has been widely studied. The solutions correspond to elements of norm~$n$ in $\mathbb{Z}[\sqrt{d}]$. When $n=1$, solutions form a cyclic group $\mathcal{U}_d$ under the Brahmagupta product. For general $n$, the set of solutions is equipped with
 an action of $\mathcal{U}_d$. As in \cite[Ch.\,6,~\S 58]{Nagell1951},
we call \emph{classes of solutions} the orbits of such action. We shall say that  a solution $(\x,\y)$ of $x^2-dy^2=n$
is the \emph{fundamental solution}
in its class if $\y$ is the minimal non-negative among all
solutions in the class. If two solutions in the class have
the same minimal non-negative $\y$, then the solution with
$\x > 0$ is the fundamental solution.

 If $d$ is not a perfect square, then there are at most finitely many classes of solutions \cite[Thm.\,109]{Nagell1951}. Moreover, the fundamental solution $(\x,\y)$ is explicitly bounded. In the case $n\geq 0$, it lies in the rectangle
 \begin{align*}
           & 0< |\x|\leq \sqrt{\frac 1 2 (u^*+1)n}, \\
    & 0\leq  \y\leq  \frac {v^*}{\sqrt{2(u^*+1)}}\cdot\sqrt{n},
 \end{align*}
 where $(\x^*,\y^*)$ is the  positive generator of $\mathcal{U}_d$.\par
From now on we will deal with the case $n=d-1$. Starting from a fundamental solution (if any) $(\x_0,\y_0)$ of $x^2-dy^2=d-1$, we can construct two sequences $(u_i,v_i)$ and $(u_{-i},v_{-i})$ obtained respectively via multiplication (Brahmagupta product) by  $(\x^*,\y^*)$  or by its inverse $(\x^*,-\y^*)$. In other words,
if
$$M=\begin{pmatrix} \x^* & d\y^*\\ \y^* & \x^*\end{pmatrix}, %\quad \quad M^{-1}=\begin{pmatrix} u_1 & -du_2\\ -u_2 & u_1\end{pmatrix}
$$
then, for every $i\in\ZZ$,
$$\begin{pmatrix} u_i\\ v_i\end{pmatrix}=M^i \begin{pmatrix} u_0\\ v_0\end{pmatrix}.$$
Notice that the sequences $(u_i)$, $(v_i)$, $(u_{-i})$, $(v_{-i})$ satisfy the second order recurrence
\begin{equation}
    \label{eqn:second_order_recurrence}
    x_{i+2}=2\x^* x_{i+1}-x_i,
\end{equation}
so that the same recurrence relation is satisfied also by the sequences $(\x_i-d\y_i)$ and $(\x_{-i}-d\y_{-i})$.  We recall the following result concerning the occurrence of powers of integers in linear recursive sequences.
\begin{theorem}[\cite{Petho1982}]\label{teo:Petho}
Let $(x_n)$ be a sequence of integers satisfying the binary linear recurrence relation
 $$x_{n}=hx_{n-1}-kx_{n-2},$$
 with $h,k\in\ZZ$ such that $h\not =0$, $(h,k)=1$ and $h^2\not=ik$ for $i\in\{1,2,3,4\}$.
 Assume that the initial values $x_0,x_1$ are not both zero and that $h^2-4k$ is not a perfect square if the quantity $k(x_1^2-hx_1 x_2+kx_0^2)$ is zero.
 
 Let $S$ be a finite set of prime numbers and consider the diophantine equation
 \begin{equation}\label{eq:diophpowers}
 x_n=wz^e, 
 \end{equation}
 with $n,e\in \NN,\ z\in \ZZ$,  $e\geq 2$, and $w$ an integer whose prime divisors all lie in $S$.
Then, there are effectively computable constants $C_1,C_2,C_3$ depending only on $h,k,S,x_0,x_1$ such that every solution as above of  equation \eqref{eq:diophpowers} satisfies
\begin{enumerate}[label=(\roman*)]
    \item \label{itm:z>1} $\max\{|w|,|z|,n,e\} < C_1, \quad $ if $|z|>1$;
    \item \label{itm:z1} $\max\{|w|,n\} < C_2, \quad $ if $|z|=1$;
    \item $n < C_3, \quad $ if $z=0$.
\end{enumerate}
\end{theorem}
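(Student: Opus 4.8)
The statement is Pethő's theorem, which I would attack through the standard circle of ideas surrounding Baker's theory of linear forms in logarithms, in both its Archimedean and $p$-adic incarnations. The plan begins with the Binet representation. The characteristic polynomial $t^2-ht+k$ has discriminant $h^2-4k\neq 0$ (this is the excluded value $i=4$), so it has two distinct roots $\alpha,\beta$ with $\alpha+\beta=h$ and $\alpha\beta=k$, and one writes $x_n=A\alpha^n+B\beta^n$ with $A,B\in\QQ(\alpha)$ determined by $x_0,x_1$. Together with $h\neq0$, $(h,k)=1$, and $h^2\neq ik$ for $i\in\{1,2,3\}$, the distinctness forces $\alpha/\beta$ not to be a root of unity: its order would necessarily lie in $\{1,2,3,4,6\}$, and each case is excluded by one of these hypotheses. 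Hence $(x_n)$ is a genuinely non-degenerate binary recurrence; the hypothesis involving $k(x_1^2-hx_1x_2+kx_0^2)$ and $h^2-4k$ serves to guarantee $A,B\neq0$. Relabel so that $|\alpha|\geq|\beta|$.

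Next I would dispose of the two easy cases. If $z=0$ then $x_n=0$, which forces $(\alpha/\beta)^n=-B/A$; since $\alpha/\beta$ is not a root of unity this holds for at most one $n$, giving an explicit $C_3$. If $|z|=1$ then $x_n=\pm w$ is an integer all of whose prime factors lie in $S$, so bounding the admissible $n$ is the effective assertion that a non-degenerate binary recurrence assumes $S$-unit values only finitely often; this follows by applying Baker's estimates to $A\alpha^n+B\beta^n=\pm w$ simultaneously at the infinite place and at the places above the primes of $S$, producing $C_2$.

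The core is the main case $|z|>1$, $e\geq2$, and the decisive difficulty is an effective \emph{absolute} bound for the parameters. Starting from $A\alpha^n=wz^e-B\beta^n$ and dividing, one obtains
\[
\Bigl|\,1-\frac{wz^e}{A\alpha^n}\,\Bigr|=\Bigl|\frac{B\beta^n}{A\alpha^n}\Bigr|\ \ll\ \Bigl|\tfrac{\beta}{\alpha}\Bigr|^{n},
\]
so the associated linear form in logarithms is exponentially small in $n$, while the growth relation $|z|^{e}\asymp|\alpha|^{n}$ ties the size of $z$ to $n/e$. The subtlety, and the main obstacle, is precisely this: the height of $z$ is itself of order $n/e$, so a single Archimedean linear form cannot separate $n$ from $e$; one must also invoke $p$-adic linear forms for $p\in S$ to control the $S$-unit $w$ (bounding $v_p(x_n)\ll\log n$, hence $\log|w|\ll\log n$) and exploit $e\geq2$. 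I would therefore first cap the exponent $e$ by combining these estimates with a gap/primitive-divisor argument, and then, for each of the finitely many admissible $e$ and each residue of $n$ modulo $e$, reduce $x_n=wz^e$ to a Thue--Mahler equation in which the operative logarithms are those of the fixed numbers $\alpha,\beta$ of bounded height. Baker's method applied to these finitely many Thue--Mahler equations yields effective bounds on $n$ and $z$, and hence on $|w|$, giving $C_1$. Throughout, the non-degeneracy hypotheses $h\neq0$, $(h,k)=1$, $h^2\neq ik$ are indispensable, securing a dominant root (or the requisite oscillation estimate when $\alpha,\beta$ are complex conjugate) on which the exponential smallness above rests.
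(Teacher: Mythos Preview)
The paper does not prove this theorem; it is quoted verbatim from Peth\H{o}'s 1982 article and invoked as a black box in the proof of Theorem~\ref{thm:petho2}. So there is no ``paper's own proof'' to compare against.

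Your sketch is a faithful outline of how Peth\H{o}'s result is actually established: Binet representation, verification that the hypotheses $h\neq0$, $(h,k)=1$, $h^2\neq ik$ for $i\in\{1,2,3,4\}$ exclude precisely the cases where $\alpha/\beta$ is a root of unity (of order $2,3,6,4,1$ respectively), disposal of $z=0$ and $|z|=1$ by the $S$-unit argument, and the main case via Archimedean and $p$-adic linear forms in logarithms combined with a reduction to finitely many Thue--Mahler equations after bounding $e$. This is essentially Peth\H{o}'s original strategy, so your proposal is correct in spirit and approach; filling in the estimates is laborious but standard.
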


%Let $(C_i)$ be one of these two sequences, and $w$ be a divisor of $d-1$.

\begin{theorem}
\label{thm:petho2}
Let $d$ be a square-free positive integer.
\begin{enumerate}[label=(\alph*)]
    \item \label{itm:petho_a} For every $p$, there are finitely many periodic $p$-adic continued fractions of type $(0,3)$ converging to $\sqrt{d}$. %(This was already proved in \cite{BrockElkies2021}).
    Moreover, the maximum exponent $s$ and index $n$ are effectively computable.
    \item \label{itm:petho_b} There are finitely many $p$ and periodic $p$-adic continued fractions of type $(0,3)$ converging to $\sqrt{d}$  and such that the exponent $s$ of $p$ in the denominator of $a_2$ is~$>1$.
\end{enumerate}
\end{theorem}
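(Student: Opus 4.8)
The plan is to reduce both statements to the occurrence of perfect powers in a binary linear recurrence and then apply Pethő's Theorem~\ref{teo:Petho}. By Proposition~\ref{prop:soluzioni0-3}, every converging $p$-adic expansion of $\sqrt d$ of type $(0,3)$ comes from an integral solution $(a_1,a_3)$ of the generalized Pell equation~\eqref{eqn:pell}, i.e.\ $a_1^2-da_3^2=d-1$, subject to the constraint~\eqref{eq:diciannove}. Since $d$ is square-free and positive it is not a perfect square, so~\eqref{eqn:pell} splits into finitely many classes of solutions \cite[Theorem~109]{Nagell1951}, each generated from an effectively bounded fundamental solution under the action of $\mathcal{U}_d$. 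Within a fixed class the quantity $a_1-da_3$ ranges over the values of a binary recurrence $(\x_n-d\y_n)_n$ (and of its backward branch), which by~\eqref{eqn:second_order_recurrence} obeys $x_{n+2}=2u^*x_{n+1}-x_n$. Following the proof of Proposition~\ref{prop:soluzioni0-3}, I would rewrite~\eqref{eq:diciannove} as
\[
\x_n-d\y_n=w\,p^{t},\qquad w\mid d-1,\quad p\nmid w,\quad t\geq 1,
\]
with $s=t-v_p(d-1)$ and $w$ ranging over the finitely many prime-to-$p$ divisors of $d-1$; the side conditions $p\nmid a_1a_3v$ only shrink the solution set further.

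Next I would check that each of these finitely many recurrences satisfies the hypotheses of Theorem~\ref{teo:Petho}, taking $h=2u^*$ and $k=1$. Here $h\neq 0$ and $(h,k)=1$ are immediate; because $d>1$ is a non-square we have $u^*\geq 2$, so $h^2=4(u^*)^2\geq 16$ avoids the forbidden values $\{1,2,3,4\}$, and $h^2-4k=4\bigl((u^*)^2-1\bigr)$ lies strictly between the consecutive squares $(2u^*-1)^2$ and $(2u^*)^2$, hence is never a perfect square. Finally no term $\x_n-d\y_n$ vanishes, since $\x_n=d\y_n$ would force $d\y_n^2=1$ in~\eqref{eqn:pell}. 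Thus Theorem~\ref{teo:Petho} applies with $S$ the fixed set of prime divisors of $d-1$, $z=p$, $e=t$, and $w$ as above.

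For part~(a) I would fix $p$ and study $\x_n-d\y_n=w\,p^{t}$. When $t\geq 2$ we are in case~(i) of Theorem~\ref{teo:Petho} (since $|z|=p>1$), which bounds $t$ and the index $n$---hence also $s$---by an effectively computable constant; the finitely many cases $t=1$ amount to fixed targets $wp$, each attained by finitely many $n$ because $|\x_n-d\y_n|\to\infty$, again with effective bounds. Ranging over the finitely many classes, branches and divisors $w$, the maximal exponent $s$ and index $n$ are effectively computable, which is part~(a).

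For part~(b) I would instead let $p$ vary and impose $s>1$. The finitely many primes dividing $d-1$ are already covered by part~(a), so I may assume $p\nmid d-1$, whence $s=t\geq 2$ and we are again in case~(i) of Theorem~\ref{teo:Petho}. The decisive point is that the constant $C_1$ there depends only on $h,k,S,x_0,x_1$---that is, only on $d$ and the finitely many classes---and \emph{not} on the particular solution; since $z=p$, the inequality $|z|<C_1$ now bounds $p$ itself, leaving finitely many primes $p$, indices $n$, and exponents $s$, hence finitely many expansions. The main obstacle is exactly this uniformity observation, together with the bookkeeping needed to cast~\eqref{eq:diciannove} in the shape $x_n=wz^e$ demanded by Pethő's theorem and to verify its (slightly delicate) non-degeneracy hypotheses; once these are in place, both finiteness statements become two readings of the same result.
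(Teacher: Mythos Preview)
Your argument is correct and follows the same strategy as the paper: reduce via Proposition~\ref{prop:soluzioni0-3} to the binary recurrence $x_n=\x_n-d\y_n$ and apply Theorem~\ref{teo:Petho}. You even verify Peth\H{o}'s non-degeneracy hypotheses more carefully than the paper does. The one tactical difference is in part~(a): the paper absorbs $p$ into the set $S$ (taking $S=\{\text{primes dividing }d-1\}\cup\{p\}$, $z=1$) and invokes case~\ref{itm:z1} directly, whereas you keep $S=\{\text{primes dividing }d-1\}$, set $z=p$, and split according to whether $t\geq 2$ (case~\ref{itm:z>1}) or $t=1$ (handled by the growth $|x_n|\to\infty$). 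The paper's choice avoids this case split and is marginally cleaner; your choice has the mild advantage that the same $S$ works uniformly for both parts. Similarly, in part~(b) your separate treatment of the primes dividing $d-1$ is unnecessary---the paper's application of case~\ref{itm:z>1} with $w=\tfrac{d-1}{v}$ and $e=s\geq 2$ already covers those primes---but it does no harm.
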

\begin{proof}
 From~\eqref{eqn:second_order_recurrence} it is easily seen that the hypotheses of Theorem \ref{teo:Petho} are satisfied for the sequences $x_n=\x_n-d \y_n$ (resp.\ $x_n=\x_{-n}-d \y_{-n}$) with $h=2\x^*$ and $k=1$.
   To prove~\textit{\ref{itm:petho_a}}, we take $S=\{\hbox{prime divisors of } d-1\}\cup \{p\}$ and $z=1$. A $p$-adic continued fraction \(
\left[\overline{a_1,a_2,a_3}\right]
\) converging to $\sqrt{d}$ has the form dictated by Proposition~\ref{prop:soluzioni0-3}. In particular, as $a_1,a_3$ satisfy~\eqref{eqn:pell}, $(a_1,a_3)$ must be one of the pairs $(\x_n,\y_n)$, which implies that $a_1-da_3$ is one of the $x_n$. Moreover, by the equality \eqref{eq:diciannove}, we see that $a_1-da_3$  satisfies~\eqref{eq:diophpowers} with $w=\frac{d-1}{v}p^s$. The claim then follows from Theorem~\ref{teo:Petho}\textit{\ref{itm:z1}}.

   To prove~\textit{\ref{itm:petho_b}}, we analogously apply Theorem \ref{teo:Petho}\textit{\ref{itm:z>1}} with $S=\{\hbox{prime divisors of }d-1\} $, $z=p$ and $e=s$.
\end{proof}

Although Theorem \ref{teo:Petho} reduces in principle the problem of finding all the solutions
of \eqref{eq:diophpowers} to a finite amount of computations, from a practical point of view the possibility
of using brute force is illusory since the computations involved are extremely heavy. %far beyond what can be achieved in practice. 
%For this reason in \cite{BoggioMori2010} the authors introduce a congruence-based procedure to look for $q$-th power
%multiples in arbitrary binary recurrence sequences ($q \geq 3$). By using it, we can obtain some effective examples.

\begin{example}\label{ex:4.24}\
    Consider the case $d=5$, and let $[\overline{a_1,a_2,a_3}]$ be a $p$-adic PCF  ($p\neq2$) converging to $\sqrt{5}$.  %We show that  $|a_2|_p = p$. 
    % \textcolor{red}{Questo si può eliminare probabilmente}
    % There are three classes of solutions of $x^2-5y^2=4$, represented by the fundamental solutions $(2,0)$ and $(\pm 3,1)$. A generator of $\mathcal{U}_5$ is $(9,4)$. Therefore the value $a_1-5a_3$ occurs in a sequence satisfying the recurrence 
    % $$x_{i+2}=18x_{i+1}-x_i$$ 
    % with one of the following six initial  $(x_0,x_1)$:
    % $$(2,-22),\quad (2,58),\quad (-2,-58),\quad (-2,22),\quad (-8,8),\quad (-8,-152).$$ 
    % Up to sign, it suffices to study the first, second and fifth sequence.
    % Therefore, by~\eqref{eq:diciannove}, we are looking in these sequences for values of the form $wp^s$, with $p$ prime, $w$ dividing $4$, and $s\geq 1$.
    % %For $W^1$ and $W^2$ we found many terms of the form $2p$, and no term of the form $kp^2$, with $k$ dividing $4$. For prime $s\geq 3$ we ran the routine in  \cite{BoggioMori2010} and did not find any element of the form $kq^s$ in the three sequences, for 
    % On the other side, all terms in the fifth sequence are divisible by $8$, so that this sequence cannot contain $a_1-5a_3$.
    % \textcolor{red}{Fine della parte da eliminare.}
Setting $p^s=|a_2|_p$, we show that $s$ must be equal to $1$. 
By~\eqref{eqn:pell} and~\eqref{eq:diciannove}, we are considering solutions of the equations $x^2-5y^2=4$ and $x-5y=wp^s$ for $w$ a divisor of 4.
The second equation tells us that $x\equiv 5y \pmod{p}$; putting this into the first equation gives $5y^2\equiv 1 \pmod{p}$. Therefore $\bigl( \frac{5}{p}\bigr)=1$ and, by reciprocity, $\bigl( \frac{p}{5}\bigr)=1$, so that $p\equiv \pm 1 \pmod{5}$.

The first equation also tells us that $x\equiv \pm 2 \pmod{5}$ and therefore from the second equation we obtain that $w=\pm 2$. Eliminating $x$ from the first equation we now find
\[
5y^2\pm 5 p^s y + p^{2s} -1=0.
\]
The discriminant of this equation as a quadratic equation in $y$ is $5(p^{2s}+4)$, therefore we must have that $p^{2s}+4=5u^2$, that is to say that $\frac{p^s+\sqrt{5}u}{2}$ has norm $-1$ in $\QQ(\sqrt{5})$. Let $\phi=\frac{1+\sqrt{5}}{2}$ be the fundamental unit in $\QQ(\sqrt{5})$.
It can be shown easily by induction that its powers can be written as $\phi^n=\frac{L_{|n|}+\mathrm{sg}(n)\sqrt{5}F_{|n|}}{2}$, where $L_n$ and $F_n$ are the $n$-th Lucas and Fibonacci numbers respectively. Therefore we are looking for prime powers in the sequence of Lucas numbers, but it is known from \cite[Thm.\,2]{MR2215137} that $1$ and $4$ are the only Lucas numbers to be perfect powers.

On the other hand, as we will see in Example~\ref{ex:pellEx}, PCFs of type $(0,3)$ converging to $\sqrt{5}$ do exist for (conjecturally infinitely) many primes.

\begin{comment}
We remark that the case of $s=2$ (or indeed any even $s$) of this example can be treated with a local argument, showing that there are no solutions.
We are considering solutions of the equations $X^2-5Y^2=4$ and $X-5Y=wp^2$ for $w$ a divisor of 4 and $p$ a prime number. We see from the first equation that $X\equiv \pm 2 \pmod{5}$ and therefore from the second equation that $w=\pm 2$. Eliminating $X$ from the first equation we find
\[
5Y^2\pm 5 p^2 Y + p^4 -1=0.
\]
The +discriminant of this equation as a quadratic equation in $Y$ is $$5(p^4+4)=5(p^2+2p+2)(p^2-2p+2),$$
and the two factors $p^2\pm 2p+2$ are coprime: any common factor would divide $4p$, but they are odd and not divisible by $p$.
Therefore one of them must be a perfect square and the other five times a perfect square, but 
\[
p^2\pm 2p +2 = (p\pm 1)^2+1
\]
can only be a perfect square if $p=\pm 1$, which is not the case.
\end{comment}

\end{example}
\subsubsection{The case \texorpdfstring{$d=a^2+1$}{d=a^2+1}}
\label{sec:a2+1}
In this case, examples of PCFs can be found by looking for prime powers occurring in a suitable recursive sequence. From now on, we will assume $a>0$.
\begin{proposition}\label{prop:4.25}
Let $f_n(x)\in\ZZ[x]$ be the sequence of polynomials defined by the recurrence 
\begin{align*}
 &f_0(x)=1,  \\
 &f_1(x)=-2x^3+2x^2-2x+1,\\
 &f_{n+2}(x)=2(2x^2+1)f_{n+1}(x)-f_n(x).
 \end{align*}
    For every $n\in\NN$ such that $|f_n(a)|$ is a power of $p$ with exponent $\geq 1$,  there is a point $\left (a\x^*_n,\frac a {f_n(a)},a\y^*_n\right )\in V(x^2-d)_{0,3}^{\rm {con}}$, i.e.\ a $p$-adically  convergent continued fraction for $\sqrt{d}$.
\end{proposition}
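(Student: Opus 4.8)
The plan is to produce, for each $n$ as in the statement, an explicit quadruple satisfying the hypotheses of the converse direction of Proposition~\ref{prop:soluzioni0-3}, and then invoke that proposition to obtain convergence. First I would record that for $d=a^2+1$ the unit Pell equation $X^2-dY^2=1$ has fundamental solution $(\x^*,\y^*)=(2a^2+1,2a)$, since $(2a^2+1)^2-(a^2+1)(2a)^2=1$. Writing $\x^*_n,\y^*_n$ for the solutions determined by $\x^*_n+\y^*_n\sqrt d=(\x^*+\y^*\sqrt d)^n$, the pair $(a_1,a_3)=(a\x^*_n,a\y^*_n)$ then satisfies $a_1^2-da_3^2=a^2\bigl((\x^*_n)^2-d(\y^*_n)^2\bigr)=a^2=d-1$, so the Pell relation~\eqref{eqn:pell} holds automatically.

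The second step is to identify $f_n(a)$ with $\x^*_n-d\y^*_n$. The matrix $M=\begin{pmatrix}\x^* & d\y^*\\ \y^* & \x^*\end{pmatrix}$ has determinant $1$ and characteristic polynomial $\lambda^2-2\x^*\lambda+1$, so (as in~\eqref{eqn:second_order_recurrence}) both $(\x^*_n)$ and $(\y^*_n)$, hence their fixed linear combination $(\x^*_n-d\y^*_n)$, obey $x_{n+2}=2(2a^2+1)x_{n+1}-x_n$, which is exactly the recurrence defining $f_n$. Comparing initial values $\x^*_0-d\y^*_0=1=f_0(a)$ and $\x^*_1-d\y^*_1=(2a^2+1)-(a^2+1)(2a)=-2a^3+2a^2-2a+1=f_1(a)$, a one-line induction gives $f_n(a)=\x^*_n-d\y^*_n$ for all $n$. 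Consequently $a_1-da_3=a(\x^*_n-d\y^*_n)=af_n(a)$, and formula~\eqref{eqn:a2Type(0,3)} yields $a_2=\frac{d-1}{a_1-da_3}=\frac{a^2}{af_n(a)}=\frac{a}{f_n(a)}$, matching the claimed middle entry.

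Finally, assume $|f_n(a)|=p^s$ with $s\geq 1$, and set $v=\varepsilon a$ where $\varepsilon\in\{\pm1\}$ is the sign of $f_n(a)$, so that $a_2=v/p^s$ is well-defined (note $f_n(a)\neq 0$). Then $v\mid a^2=d-1$, and~\eqref{eq:diciannove} reads $a_1-da_3=af_n(a)=\varepsilon a p^s=\frac{d-1}{v}p^s$, as required. The hard part is the remaining condition $p\nmid a_1a_3v$ of Proposition~\ref{prop:soluzioni0-3}, which I would extract from $p\mid f_n(a)=\x^*_n-d\y^*_n$ together with the Pell relation: substituting $\x^*_n\equiv d\y^*_n\pmod p$ into $(\x^*_n)^2-d(\y^*_n)^2=1$ gives $d(d-1)(\y^*_n)^2\equiv 1\pmod p$, which forces $p\nmid d$, $p\nmid d-1=a^2$ (hence $p\nmid a$) and $p\nmid\y^*_n$, and then $p\nmid\x^*_n$ as well. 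Thus $p\nmid a_1a_3v=\pm a^3\x^*_n\y^*_n$, all hypotheses of the converse part of Proposition~\ref{prop:soluzioni0-3} are satisfied, and the quadruple $(a_1,a_3,v,s)$ produces the $p$-adically convergent expansion $\left[\overline{a\x^*_n,\,a/f_n(a),\,a\y^*_n}\right]$ of $\sqrt d$; equivalently, the stated point lies in $V(x^2-d)_{0,3}^{\mathrm{con}}$.
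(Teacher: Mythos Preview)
Your proof is correct and follows essentially the same route as the paper: identify $(a\x^*_n,a\y^*_n)$ as a solution of the generalized Pell equation $x^2-dy^2=d-1$, verify $f_n(a)=\x^*_n-d\y^*_n$ by matching recurrences and initial values, and then invoke the converse direction of Proposition~\ref{prop:soluzioni0-3}. The only noteworthy difference is in the coprimality step: the paper obtains $\gcd(a,f_n(a))=1$ from the identity $(\x^*_n+\y^*_n)f_n(a)=1-a^2\x^*_n\y^*_n$, whereas you substitute $\x^*_n\equiv d\y^*_n\pmod p$ into the unit Pell relation to get $d(d-1)(\y^*_n)^2\equiv 1\pmod p$, which in one stroke also rules out $p\mid\x^*_n$ and $p\mid\y^*_n$; this makes the verification of the full hypothesis $p\nmid a_1a_3v$ of Proposition~\ref{prop:soluzioni0-3} more explicit than in the paper.
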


\begin{proof}
    The fundamental unit of  $\QQ(\sqrt d)$ is $a+\sqrt{d}$ and has norm $-1$.  Therefore the fundamental solution of the Pell equation $x^2-dy^2=1$ is $(2a^2+1, 2a)$, and---since $a>0$---all positive solutions $(\x^*_n, \y^*_n)$ are obtainable by the recurrence formulae 
  $$\left\{
\begin{array}{ll}
\x^*_1 &=2a^2+1,\\
\y^*_1 &=2a,\\
\x^*_{n+1}&=\x^*_1\x^*_n + d\y^*_1\y^*_n,\\
\y^*_{n+1}&=\y^*_1\x^*_n+ \x^*_1\y^*_n.\end{array} \right . $$
A class of solutions of the Pell equation $x^2-dy^2=d-1$ is given by
$$\left\{
\begin{array}{ll}
\x_n&=a\x^*_n,\\
\y_n&=a\y^*_n.\end{array}\right .  $$
Then, $$\x_n-d\y_n=a(\x^*_n-d\y^*_n).$$ 
One can check that $f_n(a)=\x^*_n-d\y^*_n$. Therefore, we have 
$$\frac{d-1}{\x_n-d\y_n} = \frac{a}{\x^*_n-d\y^*_n}= \frac a {f_n(a)},$$
and the greatest common divisor between $a$ and $f_n(a)$ is $1$ because ${\x^*_n}^2-d{\y^*_n}^2=1$ implies $$(\x^*_n+\y^*_n)f_n(a)=(\x^*_n+\y^*_n)(\x^*_n-d\y^*_n)=1-(d-1)\x^*_n\y^*_n=1-a^2\x^*_n\y^*_n.$$
The thesis then follows from Proposition~\ref{prop:soluzioni0-3}.
\end{proof}

\begin{remark}
We point out that the recurrence formulae defining the polynomials $f_n(x)$ of the previous proposition resembles the recurrence defining the Chebyshev polynomials. It turns out that one can write $f_n(x)$ as a linear combination of the Chebyshev polynomials of first and second kind evaluated in $2x^2+1$ as follows:
\begin{equation*}
f_n(x)= T_n(2x^2+1)-2x(x^2+1)U_{n-1}(2x^2+1) \qquad \forall n \ge 1.
\end{equation*}
\end{remark}

\begin{example}\label{ex:pellEx}
%\begin{enumerate}[label=(\alph*)]
 For $a=2$ looking for (negative) prime powers in the sequence $f_n(a)$ for $n\in\{1,\dots,10\}$ yields the following continued fraction expansions of $\sqrt{5}$:
\begin{align*}
  \left[\overline{18, -\frac{2}{11}, 8}\right]_{11} \tag{for $n=1$}\\
  \left[\overline{322, -\frac{2}{199}, 144}\right]_{199} \tag{for $n=2$}\\
  \left[\overline{5778, -\frac{2}{3571}, 2584}\right]_{3571} \tag{for $n=3$}\\
  \left[\overline{599074578, -\frac{2}{370248451}, 267914296}\right]_{370248451} \tag{for $n=7$}\\
  \left[\overline{10749957122, -\frac{2}{6643838879}, 4807526976}\right]_{6643838879} \tag{for $n=8$}\\
  \left[\overline{192900153618, -\frac{2}{119218851371}, 86267571272}\right]_{119218851371} \tag{for $n=9$}
\end{align*}

%     \item For $a=3$ looking for (negative) prime powers in the sequence $f_n(a)$ for $n\in\{1,\dots,10\}$ yields the following continued fractions for $\sqrt{10}$:
%     \begin{align*}
%     \left[\overline{7,-\frac{9}{13},2}\right]_{41},  \tag{for $n=1$}\\
%      \left[\overline{2163, -\frac{3}{1559}, 684}\right]_{1559},  \tag{for $n=2$}\\
%       \left[\overline{4497657843, -\frac{3}{3241728359}, 1422284292}\right]_{3241728359},  \tag{for $n=6$}\\
%        \left[\overline{246282748084857, -\frac{3}{177510561422921}, 77881443235362}\right]_{177510561422921},  \tag{for $n=9$}
% \end{align*}
%     \item for $a=4,d=17$ we found solutions for $p=103,448631,29602847,\ldots $.  
% \end{enumerate}
\end{example} 

Recall that the \emph{Bunyakowsky Conjecture} \cite{Bun57} asserts that, for every nonconstant irreducible polynomial $f(x)$ in $\ZZ[x]$ such that the values $f(n)$ are coprime for $n=1,\ldots$, there are infinitely many $n\in\ZZ$ such that $|f(n)|$ is a prime number.\\
Then, for instance, since $f_1(x)=-2x^3+2x^2-2x+1$ is irreducible, and it has constant term 1,  we expect to find infinitely many integers $d$ of the form $a^2+1$ and primes $p$ such $|f_1(a)|=p$, so that the $p$-adically convergent locus of $V(x^2-(a^2+1))_{0,3}$ contains the point $\left (a(2a^2+1),- \frac a  p, 2a^2\right) $.
The following conjecture has been tested for $n$ up to $10000$.
\begin{conjecture} \label{conj:irred} $f_n(x)$ is irreducible in $\ZZ[x]$ for every $n\in\NN$.\end{conjecture}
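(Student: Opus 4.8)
The plan is to reduce the irreducibility of $f_n$ to that of a much more transparent univariate polynomial, and then to attack the latter by root-location estimates.

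First I would exploit the substitution behind the Chebyshev expression. Writing $\alpha = x+\sqrt{x^2+1}$, so that $\alpha$ and $\beta=-1/\alpha$ are the two roots of $\alpha^2-2x\alpha-1$ and $x=\frac{\alpha^2-1}{2\alpha}$, solving the recurrence in closed form gives $f_n(x)=-\frac14\big[\alpha^{2n-1}(\alpha-1)^2+\beta^{2n-1}(\beta-1)^2\big]$, which I would verify from the defining recurrence by induction (the radical cancels by the symmetry $\alpha\leftrightarrow\beta$, so the right-hand side is a genuine polynomial in $x$; a direct check confirms it for $n=1,2$). A short manipulation using $\alpha\beta=-1$ shows that $f_n(x)=0$ is equivalent to $\alpha^{4n}(\alpha-1)^2=(\alpha+1)^2$, i.e.\ to $g(\alpha)=0$ where $g(\alpha)=\alpha^{2n+1}-\alpha^{2n}-\alpha-1$. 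Hence the $2n+1$ roots of $f_n$ are exactly the values $x=\frac{\alpha^2-1}{2\alpha}$ as $\alpha$ runs over the roots of $g$; these are distinct, since the partner $-1/\alpha$ is a root of $\alpha^{2n+1}-\alpha^{2n}+\alpha+1$, which shares no root with $g$ (a common root would force $\alpha=-1$, and $g(-1)=-2$). In particular $f_n$ is separable and the correspondence is a bijection.

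Next I would show that $f_n$ is irreducible over $\QQ$ if and only if $g$ is. If $g$ is irreducible, pick a root $\alpha_0$ and set $x_0=\frac{\alpha_0^2-1}{2\alpha_0}\in\QQ(\alpha_0)$; since $\alpha_0$ satisfies a quadratic over $\QQ(x_0)$ we have $[\QQ(\alpha_0):\QQ(x_0)]\le 2$, while $[\QQ(\alpha_0):\QQ]=2n+1$ is odd, forcing $\QQ(x_0)=\QQ(\alpha_0)$ and thus $[\QQ(x_0):\QQ]=2n+1=\deg f_n$; as $f_n(x_0)=0$, $f_n$ is irreducible. Conversely, if $f_n$ is irreducible then $[\QQ(x_0):\QQ]=2n+1$, so $[\QQ(\alpha_0):\QQ]\in\{2n+1,\,2(2n+1)\}$; but $\alpha_0$ is a root of $g$ of degree $2n+1$, whence $[\QQ(\alpha_0):\QQ]=2n+1$ and $g$ is its minimal polynomial. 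This reduces the conjecture to the clean assertion that $g(\alpha)=\alpha^{2n+1}-\alpha^{2n}-\alpha-1$ is irreducible over $\QQ$ for every $n$.

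The hard part is this last step, and it is precisely where the statement remains conjectural. Standard criteria do not apply: Perron's criterion fails because the subleading coefficient is only $1$, and $g$ is reducible modulo $2$, where $g\equiv(\alpha+1)(\alpha^{n}+1)^2$, so a single reduction is useless. I would instead attempt a Selmer--Ljunggren style argument based on the location of the roots. Writing the relation as $\alpha^{2n}=\frac{\alpha+1}{\alpha-1}$, one checks that $g$ has no root on the unit circle (there the right-hand side is purely imaginary of modulus $|\cot(\arg\alpha/2)|$, and $\alpha=\pm i$ are not roots), while the constant term $-1$ forces the product of all roots to equal $1$. The aim would be to count the roots with $|\alpha|>1$ exactly, via Rouch\'e applied to $\alpha^{2n}(\alpha-1)$ against $\alpha+1$, and then to derive a contradiction from any hypothetical monic integer factorization $g=PQ$ by comparing the product of the moduli of the roots of $P$ with its constant term $\pm1$. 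Making these modulus estimates rigorous and uniform in $n$, so as to rule out every intermediate factor degree, is the genuine obstacle; it is the same difficulty that keeps general quadrinomial irreducibility open, which is why we record the statement only as a conjecture.
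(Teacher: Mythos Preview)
The paper does not prove this statement at all: it is recorded as a conjecture, with the only supporting evidence being a computational check for $n\le 10000$. So there is no ``paper's own proof'' to compare against.

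Your write-up is honest about this: you do not claim a proof either, and you correctly flag the final irreducibility of the quadrinomial $g(\alpha)=\alpha^{2n+1}-\alpha^{2n}-\alpha-1$ as the genuine obstruction. What you do add, beyond the paper, is a clean reduction: your closed form $f_n(x)=-\tfrac14\big[\alpha^{2n-1}(\alpha-1)^2+\beta^{2n-1}(\beta-1)^2\big]$ (with $\alpha+\beta=2x$, $\alpha\beta=-1$) is correct, and the degree-parity argument showing that $f_n$ is irreducible over $\QQ$ if and only if $g$ is, via $[\QQ(\alpha_0):\QQ(x_0)]\le 2$ and $\deg g=2n+1$ odd, is valid. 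The separability/distinctness check (no common root of $g$ and its companion $\alpha^{2n+1}-\alpha^{2n}+\alpha+1$) is also fine. This reduction is a real sharpening of the conjecture relative to what the paper states, even if it does not settle it.

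Where the proposal stops being a proof is exactly where you say it does: the root-location / Selmer--Ljunggren sketch for $g$ is heuristic only. The observations you make (no roots on the unit circle, product of roots equal to $1$, reducibility mod $2$) are correct, but they do not combine into an argument that excludes every possible factor degree, and you rightly do not pretend otherwise. So the status is the same as in the paper---open---but your reformulation via $g$ is a worthwhile addition.
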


 If Conjecture \ref{conj:irred} and Bunyakowsky  Conjecture are true,
 then for every $n\in\NN$ there are infinitely many integers $a$ such that $|f_n(a)|$ is a prime $p$, giving rise to   points $\left (ax_n,-\frac a {p},ay_n\right )\in V(x^2-(a^2+1))_{0,3}^{\rm con}$.
\\
 The following conjecture, if true, would show that the condition on the exponent of $p$ in the second part of Theorem \ref{thm:petho2} cannot be dropped.
 \begin{conjecture}
For every positive integer $a$ there are infinitely many $n\in\NN$ such that $|f_n(a)|$ is a prime number.
     \end{conjecture}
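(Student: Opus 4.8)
The plan is to recognize that, for a \emph{fixed} integer $a>0$, the quantity $g_n:=f_n(a)$ is a binary linear recurrence, and that the statement is precisely the assertion that this recurrence assumes prime values infinitely often. Specializing the recurrence of Proposition~\ref{prop:4.25} at $x=a$ gives
\begin{equation*}
g_0=1,\qquad g_1=-2a^3+2a^2-2a+1,\qquad g_{n+2}=2(2a^2+1)\,g_{n+1}-g_n ,
\end{equation*}
whose characteristic polynomial $\lambda^2-2(2a^2+1)\lambda+1$ has the two roots $\lambda_{\pm}=\epsilon^{\pm 2}$, where $\epsilon=a+\sqrt{d}$ (with $d=a^2+1$) is the fundamental unit of $\QQ(\sqrt{d})$. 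Since $\lambda_+\lambda_-=1$ and $\epsilon^4$ is not a root of unity, the recurrence is non-degenerate and $|g_n|$ grows like $\epsilon^{2n}$; the conjecture therefore concerns prime values in an exponentially sparse Lucas-type sequence.

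First I would clear away any structural obstruction that could make the statement false for a purely algebraic reason. Using the identity $(\x^*_n+\y^*_n)\,g_n=1-a^2\x^*_n\y^*_n$ established in the proof of Proposition~\ref{prop:4.25}, one sees that $\gcd(a,g_n)=1$, and a short check modulo small primes rules out a fixed prime divisor of the whole sequence; one should also verify that $(g_n)$ carries no divisibility structure forcing $g_n$ to be composite for every composite index. With these obstructions removed, the natural heuristic is the Bateman--Horn/Hardy--Littlewood count: the expected ``probability'' that $g_n$ is prime is of order $1/\log|g_n|\sim 1/(2n\log\epsilon)$, and since $\sum_n 1/n$ diverges one predicts infinitely many prime values. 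This is exactly the reasoning underlying the (equally conjectural) infinitude of Fibonacci, Lucas, and Mersenne primes.

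The hard part — and the reason this is recorded as a conjecture rather than proved — is that the last step cannot presently be made rigorous: there is no known method to show that a fixed non-degenerate linear recurrence of exponential growth represents infinitely many primes, and even the most classical special cases remain open. Note, in particular, that the Bunyakowsky/Bateman--Horn circle of ideas invoked just before Conjecture~\ref{conj:irred} does \emph{not} apply here: there one fixes a polynomial and varies its integer argument, producing values of polynomial size, whereas here the argument $a$ is fixed and it is the \emph{index} $n$ that varies, producing values of size $\epsilon^{2n}$. Absent a breakthrough on primes in sparse sequences, I would present only the heuristic above, reinforced by the numerical evidence of Example~\ref{ex:pellEx}, and state honestly that an unconditional proof is beyond reach.
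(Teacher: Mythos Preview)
Your assessment is correct and matches the paper's treatment: the statement is recorded in the paper as a \emph{conjecture} with no proof attempted, only the remark that its truth would show the exponent condition in Theorem~\ref{thm:petho2}\textit{\ref{itm:petho_b}} cannot be dropped. Your explanation of why the statement resists proof --- that for fixed $a$ the values $f_n(a)$ form a non-degenerate binary recurrence of exponential growth, so the question is of Fibonacci/Lucas/Mersenne-prime type and outside the Bunyakowsky framework --- is accurate and more detailed than anything the paper offers.
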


 \iffalse
 \begin{proposition} \label{conj:irred} $f_n(X)$ is irreducible in $\ZZ(X)$ for every $n\in\ZZ$.\end{proposition}
 \begin{proof}
     For each $n$, let $g_n(x)$ be the polynomial obtained by reversing the order of the coefficients of $f_n(x)$ (namely, $g_n(x)=x^{2n+1}f_n(1/x)$). It is easy to see that each $f_n$ has constant term $1$, so that $g_n$ is monic.
 \end{proof}
Recall that the \emph{Bunyakowsky conjecture} asserts that for every irreducible polynomial $f(X)$  in $\ZZ[X]$ there are infinitely many $n\in\ZZ$ such that $|f(n)|$ is a prime number.

\begin{corollary} Assume that the Bunyakowsky  conjecture is true. Then, for infinitely many $d$ of the form $a^2+1$, there is an infinite family of prime numbers $S_d$ such that $\sqrt{d}$ admits a $p$-adic expansion of type $(0,3)$.
\end{corollary}
\fi

\subsubsection{The negative Pell equation} \label{sss:4.6.4}
Assume now that the negative Pell equation $x^2-dy^2=-1$ admits a solution, and let $(s_1,t_1)$ be the fundamental solution for it. Then the positive solutions of $x^2-dy^2=1$ can be obtained by the recurrence formulae
  $$\left\{
\begin{array}{ll}
\x^*_1 &=s_1^2+dt_1^2,\\
\y^*_1 &=2s_1t_1,\\
\x^*_{n+1}&=\x^*_1\x^*_n + d\y^*_1\y^*_n,\\
\y^*_{n+1}&=\y^*_1\x^*_n+ \x^*_1\y^*_n,\end{array}\right.  $$
and the positive solutions of $x^2-dy^2=-1$ can be obtained by the recurrence formulae
  $$\left\{
\begin{array}{ll}
s_{n+1}&=\x^*_1s_n + d\y^*_1t_n, \\
t_{n+1}&=\y^*_1s_n+ \x^*_1t_n.\end{array}\right.  $$
A class of solutions of the generalized Pell equation $x^2-dy^2=d-1$ is given by $(u_n,v_n)$, where $u_n=s_n+dt_n$, $v_n=s_n+t_n$. We have  $\frac{d-1}{u_n-dv_n}=\frac{-1}{s_n}.$
Therefore, if $|s_n|$ is a prime number $p$,  the $p$-adically convergent locus of $V(x^2-d)_{0,3}$ contains the points $\left (\pm p+dt_n, \mp \frac 1 p, \pm p+t_n\right)$.
However, notice that $s_1$ divides $\y^*_1$, therefore it also divides $s_n$ for every $n\in\NN$. 
This implies that there is at most one prime in the sequence of $s_n$'s when $s_1\not=\pm 1$. Finally, $s_1=\pm 1$ can only occur if $d=2$: in this case we do find solutions for $p=7,41,239, 9369319, 63018038201, 489133282872437279, \dots$.
%19175002942688032928599, 123426017006182806728593424683999798008235734137469123231828679

   \section{Some examples of type \texorpdfstring{$(1,3)$}{(1,3)}}
   \label{sec:1,3}
   The generic PCF of type (1,3) has the form $[b_1,\overline{a_1,a_2,a_3}]$.\\
   The equations of $V(F)_{1,3}$ are
\begin{equation*} \small{
    \begin{cases}
        Aa_1a_2a_3-2Aa_1a_2b_1-Ba_1a_2+Aa_1-Aa_2+Aa_3-2Ab_1-B&=0,\\
        -Aa_1a_2a_3b_1+Aa_1a_2b_1^2-Aa_1b_1-Aa_2a_3+Aa_2b_1-Aa_3b_1+Ab_1^2-Ca_1a_2-A-C&=0,\\
        \begin{aligned}[b]
        -Ba_1a_2a_3b_1+Ba_1a_2b_1^2-Ca_1a_2a_3+2Ca_1a_2b_1-Ba_1b_1-Ba_2a_3+Ba_2b_1+\\[-3pt]
        -Ba_3b_1+Bb_1^2-Ca_1+Ca_2-Ca_3+2Cb_1-B&=0.
        \end{aligned}
    \end{cases}}
\end{equation*}
   Eliminating $a_1$ from these equations one obtains
   \begin{equation}
       \label{eqn:elim13}
       Aa_1^2b_1^2+Ba_1^2b_1+2Aa_1b_1+Aa_3^2-2Aa_3b_1+Ab_1^2+Ca_1^2+Ba_1-Ba_3+Bb_1+A+C=0.
   \end{equation}
   \begin{remark}
        The discriminant of~\eqref{eqn:elim13}, seen as a quadratic polynomial in $b_1$, is
   \[
   \Delta(a_1^2+1)^2-4A^2(a_1a_3+1)^2.
   \]
   Thus, if $\Delta<0$ then $V(F)_{1,3}(\RR)$ is empty.
   \end{remark}

   Since the convergence only depends on the purely periodic part, Theorem~\ref{criterio.di.convergenza.padica} tells us that $[b_1,\overline{a_1,a_2,a_3}]$ converges $p$-adically if and only if $v_p(a_1a_2a_3+a_1+a_2+a_3)<0$.
    We focus on the case $F=x^2-d$. The equations of $V(F)_{1,3}$ are
\begin{equation*}
    \left\{ \begin{array}{ll}
        a_1a_2a_3+a_1+a_3-a_2=2b_1(a_1a_2+1),\\[0.5ex]
        a_3a_2+1=(d-b_1^2)(a_1a_2+1),
    \end{array} \right.
\end{equation*}
or, equivalently,

\begin{equation}\label{eq:rel5bis}
    \left\{ \begin{array}{ll}
        a_3=2b_1+\frac{ a_2-a_1}{a_1a_2+1},\\[0.5ex]
        (b_1^2-d)(a_1a_2+1)^2 + a_2^2+2b_1a_2(a_1a_2+1)+1=0.
    \end{array} \right.
\end{equation}
   %We are looking for periodic continued fractions for $\sqrt{d}$ of the form %$[b_1,\overline{a_1,a_2,a_3}]$.
   %Then
   %$$\alpha_1=\frac{b_1+\sqrt{d}}{d-b_1^2}, \quad\quad -\frac 1 {\alpha_1^c}=b_1+\sqrt{d}.$$
If we fix $b_1$, the second equation of~\eqref{eq:rel5bis} defines a conic in $a_2$ and $c=a_1a_2+1$.
%; or else
%\begin{equation}\label{eq:rel5} (b_1^2-d)a_1^2a_2^2+(b_1^2-d)+2(b_1^2-d)a_1a_2+a_2^2+2b_1a_1a_2^2+2b_1a_2+1=0.\end{equation}
\par In the remainder of this section, we assume that $d$ has the form $d=\a^2+1$ for some positive integer $\a$, and we study the integral points of the subvariety of $V(F)_{1,3}$ defined by $b_1=\a$. In this case, the second equation of~\eqref{eq:rel5bis} becomes
\begin{equation}\label{eq:rel6} -a_1^2a_2^2-2a_1a_2+a_2^2+2\a a_1a_2^2+2\a a_2=0.\end{equation}
For $a_2=0$ we obtain points 
$$[\a,\overline{ a_1,0,2\a-a_1}],$$
which are surely convergent if $v_p(\a),v_p(a_1)<0$.\\
We now find points with $a_2\not=0$.
%; firstly note that for $a_1=0$ equation \eqref{eq:rel6} gives $a_2^2+2\a a_2=0$, that is $a_2=-2\a$. This gives a point
%$$[b_1,\overline{0,-2b_1,0}].$$
%For a general $a_1$, 
Equation \eqref{eq:rel6} gives
$$a_2=\frac{2(\a-a_1)}{a_1^2-2\a a_1-1},$$
and the first equation in \eqref{eq:rel5bis} gives
$a_3=a_1$, 
so that we get a family of points
\begin{equation}
\label{eqn:cf31}
\left [\a,\overline{ a_1, \frac{2(\a-a_1)}{a_1^2-2\a a_1-1},a_1} \right ].\end{equation} 
A necessary condition for convergence in this case is
\begin{equation*}
    v_p\left(\frac{2 a_{1}^{2} \a + 4 a_{1} - 2 \a}{-a_{1}^{2} + 2 a_{1} \a + 1}\right)<0.
\end{equation*}
In particular, if $v_p(\a) \geq 0$ and $v_p(a_1)\neq 0$, \eqref{eqn:cf31} is not $p$-adically convergent.
\begin{example}
If $d=10$, $b_1=3,p=3$
    $$\sqrt{10}=\left [3,\overline{1,-\frac 2 3,1} \right ]_3.$$ The same holds in general for 
$$\sqrt{p^2+1}=\left [p,\overline{1,-\frac{p-1} p,1} \right ]_p. $$
\end{example}
\begin{example}
Setting $a_1=2$ we get
$$a_2=-\frac{2(\a-2)}{4\a-3}.$$
Setting now $\a=\frac{p^k+3}{4}$ if $p$ is congruent to 1 modulo 4, or $\a=\frac{p^{2k}+3}{4}$ if $p$ is congruent to $-1$ modulo 4 gives an infinite family of convergent examples of the form
\begin{align*}
    \sqrt{\frac{p^{2k}+6p^k+25}{16}}&=\left [\frac{p^k+3}{4},\overline{2,-\frac{p^k-5}{2p^k},2}\right ]&&\text{if $p\equiv 1\pmod 4$},\\
        \sqrt{\frac{p^{4k}+6p^{2k}+25}{16}}&=\left [\frac{p^{2k}+3}{4},\overline{2,-\frac{p^{2k}-5}{2p^{2k}},2}\right ]&&\text{if $p\equiv -1\pmod 4$}.
\end{align*}
\end{example}

\section*{Acknowledgements}
All the authors are members of the INdAM group GNSAGA.

The authors are grateful to the referees for their careful reading of the manuscript and for their valuable comments and suggestions, which helped improve the exposition of the paper.

\section*{Declarations}

\subsection*{Funding}
The authors received no specific funding for this work.

\subsection*{Conflict of interest}
The authors declare that they have no conflict of interest.

\subsection*{Data availability}
Data sharing is not applicable to this article as no datasets were generated or analysed during the current study.

\printbibliography
\end{document}